\numberwithin{equation}{section}
\DeclareMathOperator{\diag}{diag}
\newcommand{\R}{{\mathbb R}}
\newcommand{\C}{{\mathbb C}}
\newcommand{\be}{\begin{eqnarray}}
\newcommand{\ben}{\begin{eqnarray*}}
\newcommand{\en}{\end{eqnarray}}
\newcommand{\enn}{\end{eqnarray*}}
\newcommand{\ov}{\overline}
\newcommand{\curl}{{\rm curl\,}}
\newcommand{\divv}{{\rm div\,}}
\newcommand{\real}{{\rm Re\,}}
\newcommand{\Ima}{{\rm Im\,}}
\newtheorem{theorem}{Theorem}[section]
\newtheorem{lemma}[theorem]{Lemma}
\newtheorem{remark}[theorem]{Remark}
\definecolor{rot}{rgb}{1.000,0.000,0.000}
\definecolor{rot1}{rgb}{0.000,0.000,0.000}
\begin{document}
\renewcommand{\theequation}{\arabic{section}.\arabic{equation}}
\begin{titlepage}
\title{PML-based boundary integral equation method for electromagnetic scattering problems in a layered-medium}

\author{
Gang Bao\thanks{School of Mathematical Sciences, Zhejiang University, Zhejiang 310027, China. Email:{\tt baog@zju.edu.cn}},
Wangtao Lu\thanks{School of Mathematical Sciences, Zhejiang University, Zhejiang 310027, China. Email:{\tt wangtaolu@zju.edu.cn}},
Tao Yin\thanks{LSEC, Institute of Computational Mathematics and Scientific/Engineering Computing, Academy of Mathematics and Systems Science, Chinese Academy of Sciences, Beijing 100190, China. Email:{\tt yintao@lsec.cc.ac.cn}},
Lu Zhang\thanks{School of Mathematical Sciences, Zhejiang University, Hangzhou 310027, China. Email:{\tt zhanglu0@zju.edu.cn}}}
\date{}
\end{titlepage}
\maketitle

\begin{abstract}
This paper proposes a new boundary integral equation (BIE) methodology based on the perfectly matched layer (PML) truncation technique for solving the electromagnetic scattering  problems in a multi-layered medium. Instead of using the original PML stretched fields, artificial fields which are also equivalent to the solutions in the physical region are introduced. This significantly simplifies the study of the proposed methodology to derive the PML problem. Then some PML transformed layer potentials and the associated boundary integral operators (BIOs) are defined and the corresponding jump relations are shown. Under the assumption that the fields vanish on the PML boundary, the solution representations, as well as the related BIEs and regularization of the hyper-singular operators, in terms of the current density functions on the truncated interface, are derived. Numerical experiments are presented to demonstrate the efficiency and accuracy of the method.

{\bf Keywords:} Electromagnetic scattering, layered medium, boundary integral equation, perfectly matched layer
\end{abstract}

\section{Introduction}
	\label{sec:1}
	Scattering problems in layered media have important applications in diverse scientific areas such as radio communications, remote subsurface sensing, and plasmonics, to name a few. The BIE method offers an attractive approach for solving such problems in an unbounded domain and it possesses such advantages as avoiding dispersion errors, while using high-order discretization method, and only requiring discretization of the boundaries~\cite{BY20,BY21,DM97,HW08,MC96,TCPS02}. For large scale problems, the resulted dense linear systems produced by the BIEs can be efficiently solved by means of iterative linear algebra solvers in conjunction with adequate acceleration techniques~\cite{BB21,GJ23,GR87}.
	
	One of the most popular BIE methods for solving the layered-medium scattering problems typically uses layer Green function (LGF) which can automatically reduce the scattering problems to integral equations only on the local defects of the infinite plane~\cite{CY00,O04,OC04,PGM00}. However, the numerical evaluation of LGF and their derivatives requires to treat highly oscillatory integrals over infinite interval and thus, entails significant computational costs. Alternatively, the free-space Green function (FGF) can also be utilized to derive the BIEs for the layered-medium problems and the resulted BIEs, however, are established on the unbounded interfaces whose numerical solver also requires appropriate truncation strategy, for example, the approximate truncation method~\cite{MC01,SS11}, the taper function method~\cite{MSS14,SSS08,ZLSC05}, which generally suffer from low accuracy, and the windowed Green function (WGF) method~\cite{BLPT16,BP17,BY21}. Recently, a second-order method of moments (MoM) based on the WGF method have been proposed in~\cite{AP22} for the two-layered medium electromagnetic problems and a large truncated interface is necessary to achieve high accuracy.
	
	The PML truncation has been shown to be a very effective technique to truncate the problems in unbounded domains and has been extensively studied especially for domain discretization type methods, like the finite element method. The well-posedness and exponential convergence of solutions to the PML truncated problems associated with the acoustic and electromagnetic scattering problems in a two-layered medium has been proved in~\cite{CZ10,CZ17,DJZ20}. Inspired by the rapid decay of the solutions in the PML region, PML-based BIE methods have been proposed for solving the two-dimensional layered-medium problems~\cite{GL22,LLQ18,L21,YHLR22} and water-waves problem~\cite{ALC23}. This method requires to treat the integral operators involving PML stretched FGF which will cause some existed numerical discretization methodology, for example the kernel-splitting based Nystr\"om method~\cite{CK98}, to fail. In~\cite{LXYZ23}, a uniform fast and highly accurate numerical solver is developed for solving both the two- and three-dimensional acoustic scattering problems in a layered medium. But the application of this methodology to the more complicated electromagnetic scattering problems is significantly non-trivial since unlike the acoustic case~\cite{LS01}, even the jump relations associated with the PML-transformed layer potentials and BIOs for the electromagnetic problems are unknown.
	
	This work devotes to proposing a novel PML-based BIE method, as well as the mathematical foundations including the jump relations, solution representations and regularization of hyper-singular integral operators, for solving the electromagnetic scattering problems in a two-layered medium ($N=2$) and discussing its application to the problems in arbitrary multi-layered medium ($N>2$). Relying on the PML truncation strategy, the original scattering problem is truncated onto a bounded domain. The uniaxial PML stretching is applied in this work, for simplicity, which can simplify the derivation of the jump relations and regularization of the PML-transformed integral operators, but the corresponding derivations can be extended to the case of spherical PML stretching in an analogous manner. Unlike using the original PML stretched fields as that studied for the acoustic problem~\cite{LXYZ23}, artificial fields, which are still equivalent to the exact fields in the physical region, are introduced for the electromagnetic case to obtain the PML truncated problem. In terms of the PML stretched FGF of the Helmholtz equation, we define two new PML-transformed layer potentials, and the associated two BIOs, to derive the solutions representations and rigorously show the jump relations between them. Owing to the use of artificial fields, the resulted jump relations, see Theorem~\ref{lemma1}, are just like the well-known results for the classical electromagnetic problems in free-space. In addition, a regularization of the PML-transformed hyper-singular BIO is proved to re-express the hyper-singular operator as a combination of weakly-singular integral operators and surface differential operators. Then the Chebyshev-based rectangular-polar solver developed in~\cite{BG20,BY20,LXYZ23} is utilized for the numerical implementation of the BIEs. Various numerical examples, considering the half-space problems and two/three-layered medium problems, are presented to demonstrate the superior performance of the proposed method. We believe that the developed numerical solver can also be extended to the more challenging elastic layered-medium scattering problems, fluid-solid interaction problems and electromagnetic-elastic coupled problems, although the convergence theory of the PML truncation remains open, and these will be left for future works.
	
	This paper is organized as follows. In Section~\ref{sec2}, we describe the electromagnetic scattering problems in a two-layered medium and, as a comparison to the new results, briefly present the classical layer potentials, BIOs and jump relations. Section~\ref{sec3.1} introduces the used PML stretching and the truncated PML problem in terms of artificial fields is obtained in Section~\ref{sec3.2}. Relying on the jump relations associated with the newly defined PML-transformed layer potentials and BIOs, Section~\ref{sec3.3} establishes the PML-based BIEs for solving the two-layered medium electromagnetic problems. Section~\ref{sec3.4} derives a new regularized formulation for the hyper-singular BIO. In Section~\ref{sec4}, an application of the PML-BIE solver to arbitrary multi-layered medium scattering problems are discussed. Numerical examples considering the half-space problems and two/three-layered medium problems are presented in Section~\ref{sec5} to illustrate the performance of the proposed method.

	\section{Preliminaries}
	\label{sec2}
	\subsection{Two-layered medium problems}
	\label{sec2.1}
	
	\begin{figure}[htb]
		\centering
		\begin{tabular}{cc}
			\includegraphics[scale=0.13]{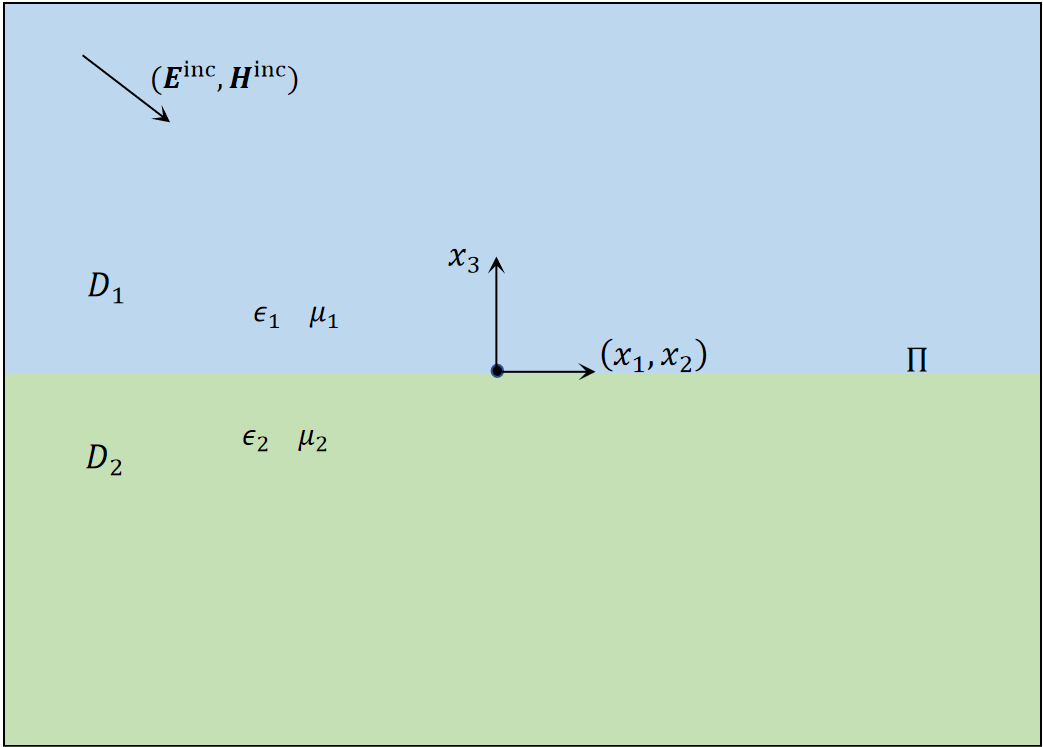} &
			\includegraphics[scale=0.13]{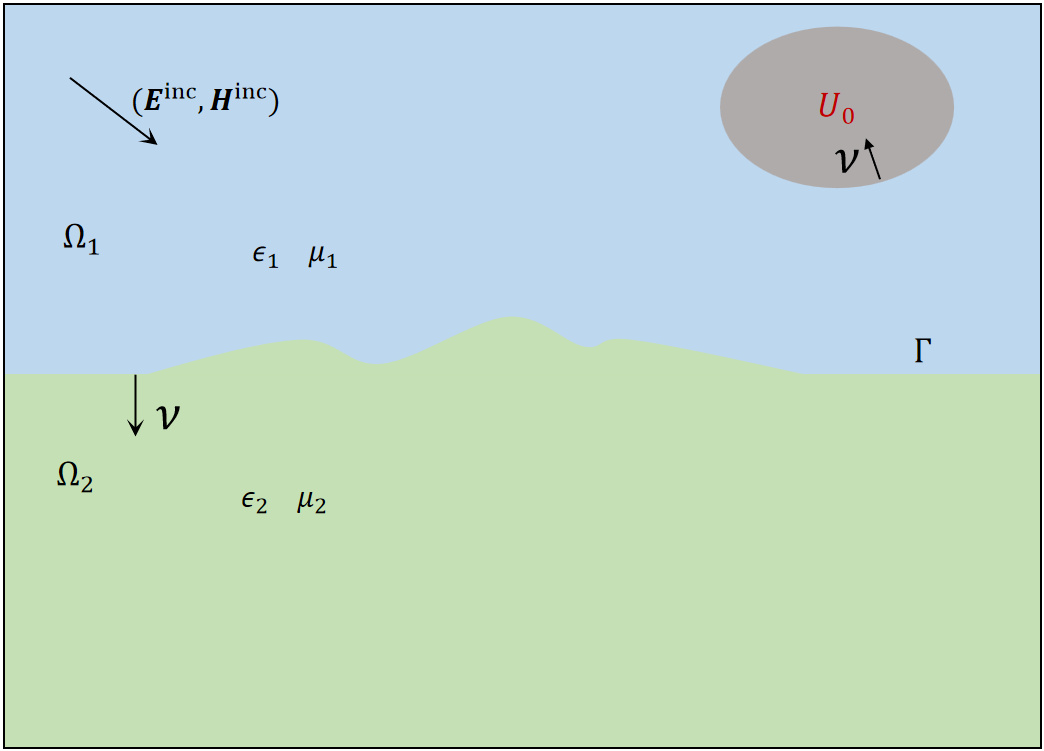} \\
			(a) Planar layered medium&(b) Perturbed planar layered medium
		\end{tabular}
		\caption{Geometry description of the problem under consideration: scattering by a planar layered medium (a) or a defect on a penetrable layer (b). $\Pi$ and $\Gamma$ denote the interface between the two-layered medium.}
		\label{Geometrydescription}
	\end{figure}
	
	As shown in Fig.~\ref{Geometrydescription}(b), an incoming wave is scattered by the dielectric interface $\Gamma$  which contains a local defect of the flat interface $\Pi$ (see Fig.~\ref{Geometrydescription}(a)) and/or a bounded obstacle $U_0$ in the presence of a two-layered medium $\Omega_j$, $j=1,2$, which is occupied by homogeneous materials characterized by the dielectric permittivity $\epsilon_j$ and the magnetic permeability $\mu_j$. For simplicity, we assume that $\Gamma$ is smooth. The unperturbed configuration depicted in Fig.~\ref{Geometrydescription}(a) for reference is composed by two planar layers $D_j$, $j=1,2$, with the interface $\Pi$ between the layers.
	
	Letting $\omega$ denote the angular frequency, the wavenumbers $k_j$ in the two layers are given by $k_j=\omega\sqrt{\mu_j\epsilon_j}, j=1,2$. In this paper, we consider two kinds of time-harmonic incident fields. The first one is a plane wave
	\be
	\label{PPW}
	\bm E^{\rm inc}(\bm x)=( \bm p\times \bm k)e^{i \bm k\cdot \bm x}\quad {\rm and} \quad \bm {H}^{\rm inc}(\bm x)=\frac{1}{\omega\mu_1}\bm k\times\bm E^{\rm inc}(\bm x),\quad \bm x\in\R^3,
	\en
	and the second one is a dipole point source
	\be
	\label{PSW}
	\begin{cases}
		\bm E^{\rm inc}(\bm x)=i\omega\mu_1\nabla_{\bm x}\times\nabla_{\bm x}\times\left[\Phi(k_1,\bm x,\bm z)\bm p\right],\cr
		\bm H^{\rm inc}(\bm x)=\nabla_{\bm x}\times\left[\Phi(k_1,\bm x,\bm z)\bm p\right],
	\end{cases} \bm x\in\R^3,
	\en
	located at $\bm z\in\Omega_1$ with $\bm x\ne \bm z$ where $\Phi(k,\bm x,\bm z)$ denotes the fundamental solution of the Helmholtz equation
	\ben
	\Phi(k,\bm x,\bm z)=\frac{\exp(ik\left|\bm x-\bm z\right|)}{4\pi\left|\bm x-\bm z\right|},
	\enn
	and $\bm p=(p_{{x}_1},p_{{x}_2},p_{{x}_3})$ and $\bm k=(0,k_{1, x_2},-k_{1,x_3})$ are constant vectors with $k_{1,x_3}> 0$ and $|\bm k|=k_1$. For simplicity, in this work we only consider $U_0=\emptyset$ for the multi-layered medium problems. Then the scattered fields $({\bm E}^{\rm sca}_j,{\bm H}^{\rm sca}_j)$, $j=1,2$ satisfy the following model of Maxwell's equations:
	\be
	\label{OP}
	\begin{cases}
		\curl {\bm E}_j^{\rm sca}-i\omega\mu_j{\bm H}_j^{\rm sca}=\bm 0 & \mbox{in}\quad\Omega_j,\cr
		\curl {\bm H}_j^{\rm sca}+i\omega\epsilon_j{\bm E}_j^{\rm sca}=\bm 0 & \mbox{in}\quad\Omega_j,\cr
		{\bm\nu}\times {\bm E}_2^{\rm sca}-{\bm \nu}\times {\bm E}_1^{\rm sca}=\bm f& \mbox{on}\quad\Gamma,\cr
		{\bm \nu}\times {\bm H}_2^{\rm sca}-{\bm \nu}\times {\bm H}_1^{\rm sca}=\bm g& \mbox{on}\quad\Gamma,\cr
		\lim\limits_{|\bm x|\to\infty}\left({\bm E}_j^{\rm sca}(\bm x)\times \bm x+|\bm x|\sqrt{\frac{\mu_j}{\epsilon_j}}{\bm H}_j^{\rm sca}(\bm x)\right)=\bm 0, &\cr
		\lim\limits_{|\bm x|\to\infty}\left({\bm H}_j^{\rm sca}(\bm x)\times \bm x-|\bm x|\sqrt{\frac{\epsilon_j}{\mu_j}}{\bm E}_j^{\rm sca}(\bm x)\right)=\bm 0, &
	\end{cases}
	\en
	where the tangential vector fields $\bm f$ and $\bm g$ are given by
	\ben
	\bm f=\bm \nu\times {\bm E}_1^{\rm src}|_{\Gamma}-\bm\nu\times {\bm E}_2^{\rm src}|_{\Gamma},\quad \bm g=\bm\nu\times \bm H_1^{\rm src}|_{\Gamma}-\bm\nu\times \bm H_{2}^{\rm src}|_{\Gamma}.
	\enn
	Here, $\bm\nu$ denotes the unit normal vector on the boundary, see Fig.~\ref{Geometrydescription}(b) and $\bm E_j^{\rm src}, \bm H_j^{\rm src}$ denote the auxiliary fields given by
	\be
	\label{auxpl}
	&& {\rm Plane \; wave\; case:}\;(\bm E_j^{\rm src},\bm H_j^{\rm src})=\begin{cases}
		(\bm E^{\rm inc},\bm H^{\rm inc})+(\bm E_1^{\rm re},\bm H_1^{\rm re}), & j=1,\cr
		(\bm E_2^{\rm re},\bm H_2^{\rm re}),& \rm j=2,
	\end{cases}\\
	\label{auxpo}
	&& {\rm Point\; source\; case:}\;(\bm E_j^{\rm src},\bm H_j^{\rm src})=\begin{cases}
		(\bm E^{\rm inc},\bm H^{\rm inc}), &  j=1,\cr
		(\bm 0,\bm 0),& \rm j=2,
	\end{cases}
	\en
	where $(\bm E_j^{\rm re},\bm H_j^{\rm re}), j=1,2$ represent the reflected fields resulting from the scattering of the plane wave $(\bm E^{\rm inc}, \bm H^{\rm inc})$ taking the form (\ref{PPW}) by the planar layer medium, see Fig.~\ref{Geometrydescription}(a). The specific forms of $(\bm E_j^{\rm src},\bm H_j^{\rm src}), j=1,2$ will be shown in Appendix.
	
	\begin{remark}
		\label{remark-half0}
		The half-space problems with $U_0$ and $\Omega_2$ being perfect electric conductor (PEC) or perfect magnetic conductor (PMC) for which the boundary condition at the boundary of the impenetrable objects are given by
		\be
		\label{PECC0}
		&&{\rm PEC\;\; case:}\qquad\bm \nu\times \bm E=0 \quad\mbox{on}\quad\Gamma_*:=\partial U_0\cup\Gamma,\\
		\label{PMCC0}
		&&{\rm PMC\;\; case:}\qquad\bm \nu\times \bm H=0 \quad\mbox{on}\quad\Gamma_*,
		\en
		respectively, can also be considered analogously. The corresponding PML-BIE will be briefly discussed in Remark~\ref{remark-halfpml}.
	\end{remark}

	\subsection{Classical layer potentials and BIOs}
	\label{sec2.2}
	
	As a comparison to Section~\ref{sec3.3}, in this section we introduce the classical layer potentials and BIOs associated with the Maxwell's equation in terms of the free-space fundamental solution $\Phi(k,\bm x,\bm y)$. For $j=1,2$, define the layer potentials and BIOs:
	\be
	\label{SLP}
	&&S_{j}(\bm \varphi)(\bm x):=\nabla_{\bm x}\times\nabla_{\bm x}\times\int_{\Gamma}\Phi(k_j,\bm x,\bm y)\bm \varphi(\bm y)ds_{\bm y},\quad \bm x\in\Omega_j,\\
	\label{DLP}
	&&D_{j}(\bm  \varphi)(\bm x):=\nabla_{\bm x}\times\int_{\Gamma}\Phi(k_j,\bm x,\bm y)\bm \varphi(\bm y)ds_{\bm y},\quad \bm x\in\Omega_j,
	\en
	and
	\be
	\label{SLO}
	N_{j}(\bm\varphi)(\bm x):=&&\bm{\nu_x}\times\nabla_{\bm x}\times\nabla_{\bm x}\times\int_{\Gamma}\Phi(k_j,\bm x,\bm y)\bm\varphi(\bm y)ds_{\bm y},\quad \bm x\in\Gamma,\\
	\label{DLO}
	K_{j}(\bm\varphi)(\bm x):=&&\bm{\nu_x}\times\nabla_{\bm x}\times\int_{\Gamma}\Phi(k_j,\bm x,\bm y)\bm\varphi(\bm y)ds_{\bm y},\quad \bm x\in\Gamma,
	\en
	for $j=1,2$. Then we conclude the following well-known jump relations.
	
	\begin{theorem}
		\label{classical-jump}
		Let $\bm\varphi\in C^1(\Gamma)^3$ be a tangential field. It holds that
		\ben
		\nabla_{\bm x}\times S_j(\bm\varphi)(\bm x)=k_j^2D_j(\bm\varphi)(\bm x),\quad \nabla_{\bm x}\times D_j(\bm\varphi)(\bm x)=S_j(\bm\varphi)(\bm x), \quad \bm x\in\Omega_j,
		\enn
		and the following jump relations hold:
		\ben
		\label{JR1}
		&&\lim_{h\to 0^+}\bm{\nu_x}\times S_1(\bm \varphi)(\bm x- h\bm{\nu_x})=N_1 (\bm \varphi)(\bm x),\quad \bm x\in\Gamma,\\
		\label{JR2}
		&&\lim_{h\to 0^+}\bm{\nu_x}\times D_1(\bm \varphi)(\bm x- h\bm{\nu_x})=(K_1-\frac{\mathbb{I}}{2})\bm \varphi(\bm x),\quad \bm x\in\Gamma,\\
		\label{JR3}
		&&\lim_{h\to 0^+}\bm {\nu_x}\times S_2(\bm \varphi)(\bm x+ h\bm{\nu_x})=N_2(\bm \varphi)(\bm x),\quad \bm x\in\Gamma,\\
		\label{JR4}
		&&\lim_{h\to 0^+}\bm{\nu_x}\times D_2(\bm \varphi)(\bm x+ h\bm{\nu_x})=(K_2+\frac{\mathbb{I}}{2})\bm \varphi(\bm x),\quad \bm x\in\Gamma.
		\enn
	\end{theorem}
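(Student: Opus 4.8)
The plan is to reduce every assertion to classical \emph{scalar} single-layer potential theory by commuting the differential operators past the integrals and exploiting that $\bm\varphi$ is tangential. Introduce the vector single-layer potential $\bm A(\bm x):=\int_{\Gamma}\Phi(k_j,\bm x,\bm y)\bm\varphi(\bm y)\,ds_{\bm y}$, so that $D_j(\bm\varphi)=\curl\bm A$ and $S_j(\bm\varphi)=\curlcurl\bm A$ by definition. For $\bm x\in\Omega_j$ one may differentiate under the integral sign, and each Cartesian component of $\bm A$ solves the Helmholtz equation $\Delta A_\ell+k_j^2 A_\ell=0$; hence the identity $\curlcurl=\grad\divv-\Delta$ gives $S_j(\bm\varphi)=\grad(\divv\bm A)+k_j^2\bm A$ in $\Omega_j$. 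Applying $\curl$ and using $\curl\grad=0$ yields $\curl S_j(\bm\varphi)=k_j^2\curl\bm A=k_j^2 D_j(\bm\varphi)$, while $\curl D_j(\bm\varphi)=\curlcurl\bm A=S_j(\bm\varphi)$ is immediate; this settles the two curl identities.

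For the jump relations, fix $\bm x_0\in\Gamma$ and localise: choose a smooth cutoff $\chi$ equal to $1$ near $\bm x_0$ and supported in a small coordinate patch of $\Gamma$, and split $\bm\varphi=\chi\bm\varphi+(1-\chi)\bm\varphi$. The potentials generated by $(1-\chi)\bm\varphi$ are smooth across $\Gamma$ near $\bm x_0$ and contribute nothing to any jump, so it suffices to treat a tangential $C^1$ density with support in a compact patch; this reduces matters to the classical bounded-surface setting and simultaneously sidesteps the unboundedness of $\Gamma$. Commuting the curl inside, $D_j(\bm\varphi)(\bm x)=\int_\Gamma\nabla_{\bm x}\Phi(k_j,\bm x,\bm y)\times\bm\varphi(\bm y)\,ds_{\bm y}$, the magnetic-dipole (Maxwell double-layer) potential. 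Using $\bm\nu_{\bm x}\times(\nabla_{\bm x}\Phi\times\bm\varphi)=(\bm\nu_{\bm x}\cdot\bm\varphi)\nabla_{\bm x}\Phi-(\bm\nu_{\bm x}\cdot\nabla_{\bm x}\Phi)\bm\varphi$, the first term is only weakly singular because $\bm\nu(\bm x)\cdot\bm\varphi(\bm x)=0$ and $\bm\varphi\in C^1$, whereas the second features the (adjoint) double-layer kernel $\bm\nu(\bm x)\cdot\nabla_{\bm x}\Phi$, whose surface integral carries the familiar $\mp\tfrac12$ jump; with the convention $\bm x-h\bm\nu_{\bm x}\in\Omega_1$ and $\bm x+h\bm\nu_{\bm x}\in\Omega_2$ this produces $(K_1-\tfrac{\mathbb I}{2})\bm\varphi$ and $(K_2+\tfrac{\mathbb I}{2})\bm\varphi$, i.e. the two relations for $D_1$ and $D_2$. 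Alternatively one may simply quote the standard jump relations for the magnetic-dipole operator (see, e.g., \cite{CK98}).

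For $S_j$, start from $S_j(\bm\varphi)=k_j^2\bm A+\grad(\divv\bm A)$ established above. Because $\bm\varphi$ is tangential and $C^1$, the surface divergence theorem on the compact patch gives $\divv\bm A(\bm x)=\int_\Gamma\nabla_{\bm x}\Phi\cdot\bm\varphi\,ds_{\bm y}=-\int_\Gamma\nabla_{\bm y}\Phi\cdot\bm\varphi\,ds_{\bm y}=\int_\Gamma\Phi\,\mathrm{Div}_\Gamma\bm\varphi\,ds_{\bm y}$, so $\divv\bm A$ is the scalar single-layer potential with density $\mathrm{Div}_\Gamma\bm\varphi$. Both $\bm A$ and this scalar single-layer potential are continuous across $\Gamma$ together with their surface gradients, and $\bm\nu_{\bm x}\times\grad(\divv\bm A)$ involves only the surface-gradient part of $\grad(\divv\bm A)$; hence $\bm\nu_{\bm x}\times S_j(\bm\varphi)$ extends continuously up to $\Gamma$ from either side, the common trace being $N_j(\bm\varphi)$, which gives the two relations for $S_1$ and $S_2$.

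The hard part will be the jump analysis of the magnetic-dipole potential $D_j$: one must verify that the singular surface integral converges as a principal value on $\Gamma$ and that its non-tangential limits from the two sides differ from it by exactly $\mp\tfrac12\bm\varphi$; this is where the solid-angle computation enters, and it is precisely the $C^1$-regularity of $\bm\varphi$ together with the smoothness of $\Gamma$ that forces the remainder terms to vanish uniformly. The remaining ingredients — differentiation under the integral for $\bm x\in\Omega_j$, the surface divergence theorem on the patch, and the continuity of the scalar single-layer potential and of its tangential gradient — are routine.
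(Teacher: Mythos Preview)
Your proposal is correct and follows the standard textbook route (essentially the argument in Colton--Kress~\cite{CK98}): reduce the curl identities to $\curlcurl=\grad\divv-\Delta$ applied to the vector single layer, obtain the $\mp\tfrac12$ jump for $\bm\nu\times D_j$ from the scalar double-layer jump via the BAC--CAB expansion, and get continuity of $\bm\nu\times S_j$ from $S_j=k_j^2\bm A+\grad(\divv\bm A)$ together with the surface-divergence identity $\divv\bm A=\int_\Gamma\Phi\,\mathrm{Div}_\Gamma\bm\varphi\,ds$.

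There is nothing to compare against: the paper does \emph{not} prove Theorem~\ref{classical-jump}. It is stated as a ``well-known'' result, included only to provide a classical template against which the genuinely new PML-transformed jump relations (Theorem~\ref{lemma1}) are modeled. The paper's effort goes entirely into the PML analogue, where the stretched kernel $\widetilde\Phi$ and the matrices $\mathbb A,\mathbb B$ prevent a direct appeal to the classical theory and force the explicit Lemmas~\ref{The3.1}--\ref{The3.3}. Your localisation via a cutoff to reduce the unbounded $\Gamma$ to a compact patch is a sensible addition that the paper does not need to make in the classical case (since it simply cites the result), but it is exactly the kind of care one would want if writing out a full proof.
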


	\section{The truncated PML problems and BIEs}
	\label{sec3}
	The purpose of this section is to introduce the PML truncation strategy and derive the BIEs for the truncated Maxwell's equation.
	
	\subsection{The PML stretching}
	\label{sec3.1}
	
	The PML method involves analytical stretching of the real spatial coordinates of the physical equations into the complex plane, along which the outward propagating waves must be attenuated in the absorbing layer. The well-posedness and the exponential decay of the solution in PML region has been shown in \cite{CZ17,DJZ20}. For simplicity, following \cite{LLQ18,LXYZ23}, the complex change of spatial variable $\bm x=( x_1,x_2,x_3)\in \Omega \subset \R^3\mapsto \widetilde {\bm x}(\widetilde{ x}_1,\widetilde{x}_2,\widetilde{x}_3)\in \widetilde \Omega\subset \C^3$ used in this work is defined as
	\be
	\label{CCS}
	\widetilde {x}_l({x}_l)={x}_l+i\int_0^{x_l}\sigma_l(t)dt,\quad l=1,...,3,
	\en
	for $l=1,\cdots,3$. In fact, use of this change of spatial variable ensures the resulted matrixes $\mathbb{A}, \mathbb{B}$ to be diagonal which will simplify the discussion of the jump relations and regularization of the PML transformed integral operators. Specifically, the PML medium property $\sigma_l$ are defined piecewise by the form
	\be
	\label{sigma}
	\sigma_l({x}_l)=\left\{ {\begin{array}{ll}
			\frac{2Sf_1^P}{f_1^P+f_2^P}, & a_l\le {x}_l\le a_l+T_l,\\
			S,& {x}_l>a_l+T_l,\\
			0,&-a_l<{x}_l<a_l,\\
			\sigma_l(-{x}_l),&{x}_l\le -a_l,
	\end{array}} \right.
	\en
	where $T_l>0$, $l=1,\cdots,3$, denote the thickness of the PML, $P$ ($P\ge2$) is a positive integer and
	\ben
	f_1=\left(\frac{1}{2}-\frac{1}{P}\right)\overline {x}_l^3 +\frac{\overline {x}_l}{P}+\frac{1}{2},\quad f_2=1-f_1,\quad\overline { x}_l=\frac{{x}_l-\left(a_l+T_l\right)}{T_l}.
	\enn
	Clearly $\sigma_l$ satisfy the property
	\ben
	\sigma_l(t)=\sigma_l(-t),\quad \sigma_l(t)=0\;\;\mbox{for}\;\; \left| t \right|\le a_l,\quad \sigma_l(t)>0 \;\;\mbox{for}\;\; \left| t \right|> a_l.
	\enn
	Moreover, let $B_a:=(-a_1, a_1)\times (-a_2, a_2)\times (-a_3, a_3)\subset \R^d$ be a rectangle domain. Then the infinite domain can be truncated onto a bounded domain. As shown in Fig.\ref{PML3D}, using the box $B_{a,T}:=(-a_1-T_1, a_1+T_1)\times (-a_2-T_2, a_2+T_2)\times (-a_3-T_3, a_3+T_3)$, we define the following areas
	\ben
	&&\Omega_{\rm PHY}:=B_a,\quad \Omega_{\rm PML}:=B_{a,T}\backslash\ov{B_a},\quad \Omega_1^b:=B_{a,T}\cap\Omega_1,\\
	&& \Omega_2^b:=B_{a,T}\cap\Omega_2,\quad\Gamma^b:=\Gamma\cap B_{a,T},\quad \Gamma^+:=\partial \Omega_1^b\backslash \Gamma^b,  \\
	&& \Gamma^-:=\partial \Omega_2^b\backslash \Gamma^b,\quad\Gamma_{\rm PHY}=\Gamma^b\cap\Omega_{\rm PHY},\quad\Gamma_{\rm PML}=\Gamma^b\cap\Omega_{\rm PML}.
	\enn
	
	\begin{figure}[htb]
		\centering
		\begin{tabular}{cc}
			\includegraphics[scale=0.1]{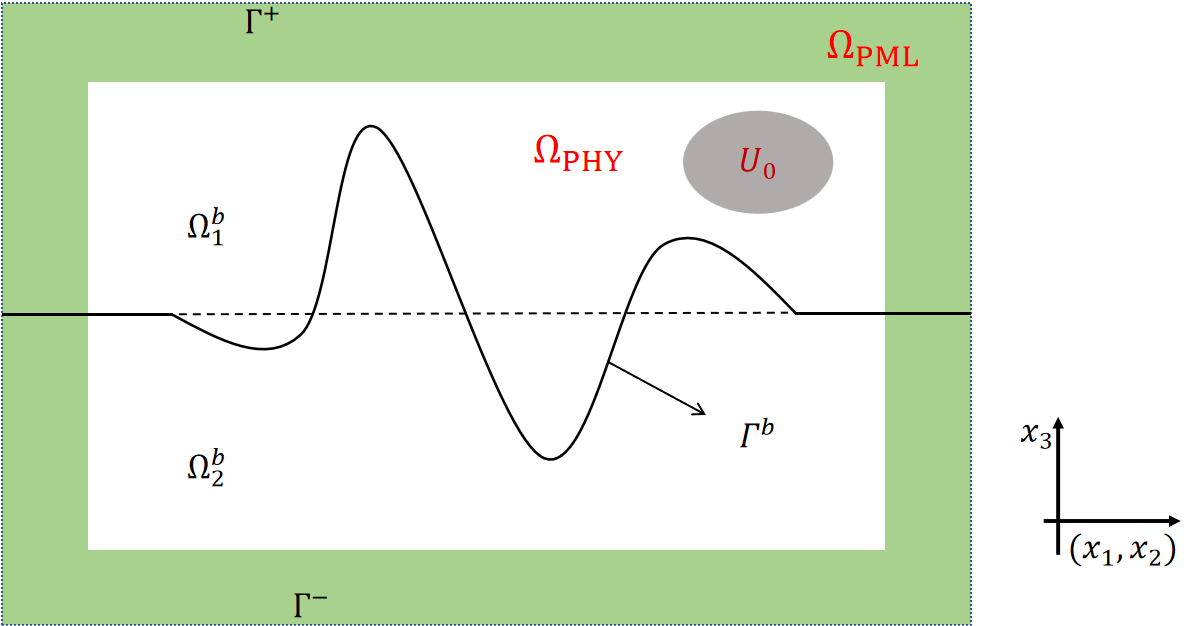} &
			\includegraphics[scale=0.12]{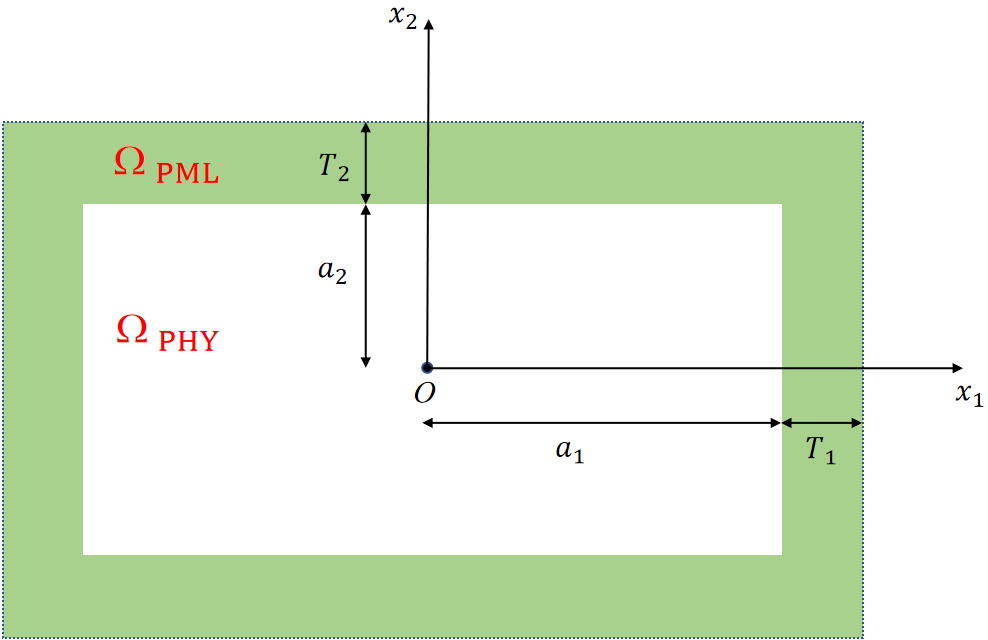}  \\
			(a)&(b)
		\end{tabular}
		\caption{(a) The PML-truncated domain; (b) The vertical view of PML truncation on the bottom infinite surface.}
		\label{PML3D}
	\end{figure}
	
	\subsection{The truncated PML problems}
	\label{sec3.2}
	
	Introducing the complex coordinate map (\ref{CCS}), a direct calculation yields the stretched gradient, curl, divergence, and Laplace operators~\cite{CZ17,LS01} as follows:
	\ben
	\label{eq1}
	&&\widetilde \nabla v:=\mathbb{B}^{-1}\nabla v,\quad \widetilde\nabla\times \bm u:=J^{-1}\mathbb{B}\curl(\mathbb{B}\bm u),\quad \widetilde\nabla\cdot \bm u:=J^{-1}{\divv}(J\mathbb{B}^{-1}\bm u),\\
	\label{eq2}
	&&\widetilde\Delta v:=J^{-1}\divv(\mathbb{A}^{-1}\nabla v),\quad \widetilde\nabla\times\widetilde\nabla\times \bm u=J^{-1}\mathbb{B}\curl\left[\mathbb{A}\curl(\mathbb{B}\bm u)\right],
	\enn
	where
	\ben
	&&\alpha_l(x_l)=1+i\sigma_l(x_l),\quad l=1,...,3,\\
	&&\mathbb{B}=\diag\left\{\alpha_1,\alpha_2,\alpha_3\right\},\quad J=\alpha_1\alpha_2\alpha_3,\quad \mathbb{A}=J^{-1}\mathbb{B}^2.
	\enn
	For the convenience of the subsequent derivation process, we introduce the artificial variables
	\ben
	\widetilde{\bm E}_j(\bm x)=\mathbb{B}(\bm x)\bm E_j(\widetilde {\bm x}),\quad \widetilde{\bm H}_j(\bm x)=\mathbb{B}(\bm x)\bm H_j(\widetilde {\bm x}).
	\enn
	It is easy to see that $\widetilde{\bm E}_j(\bm x)=\bm E_j(\bm x)$ in the physical region $\Omega_{\rm PHY}$.
	
	With the help of the above notations, we can obtain the stretched Maxwell's equations
	\be
	\label{PMLP}
	\begin{cases}
		\curl {\widetilde {\bm E}}_j^{\rm sca}-i\omega\mu_j\mathbb{A}^{-1}{\widetilde {\bm H}}_j^{\rm sca}=0 & \mbox{in}\quad\Omega_j^b,\cr
		\curl {\widetilde {\bm H}}_j^{\rm sca}+i\omega\epsilon_j\mathbb {A}^{-1}{\widetilde {\bm E}}_j^{\rm sca}=0 & \mbox{in}\quad\Omega_j^b.
	\end{cases}
	\en	
	On $\Gamma^b$, the original continuity conditions on $\Gamma$ can be transformed into
	\be
	\label{PMLB}
	\begin{cases}
		\bm{\nu}\times {\widetilde {\bm E}}_2^{\rm sca}-\bm{\nu}\times {\widetilde {\bm E}}_1^{\rm sca}=\widetilde{\bm f}& \mbox{on}\quad\Gamma^b,\cr
		\bm{\nu}\times {\widetilde {\bm H}}_2^{\rm sca}-\bm{\nu}\times {\widetilde {\bm H}}_1^{\rm sca}=\widetilde{\bm g}& \mbox{on}\quad\Gamma^b,
	\end{cases}
	\en	
	where the tangential vector fields $\widetilde{\bm f}$ and $\widetilde{\bm g}$ are given by
	\ben
	\widetilde{\bm f}=\bm {\nu}\times \widetilde{\bm E}_1^{\rm src}|_{\Gamma^b}-\bm{\nu}\times \widetilde{\bm E}_2^{\rm src}|_{\Gamma^b},\quad \widetilde{\bm g}=\bm{\nu}\times {\widetilde {\bm H}}_1^{\rm src}|_{\Gamma^b}-\bm{\nu}\times {\widetilde {\bm H}}_2^{\rm src}|_{\Gamma^b},
	\enn
	with $\widetilde{\bm E}_j^{\rm src}(\bm x)=\mathbb B(\bm x){\bm E}_j^{\rm src}(\widetilde{\bm x})$ and $\widetilde{\bm H}_j^{\rm src}(\bm x)=\mathbb B(\bm x){\bm H}_j^{\rm src}(\widetilde {\bm x})$, $j=1,2$.

	Inspired by the substantial convergence analysis in~\cite{CZ17,DJZ20} which shows that an exponentially decay truncation error will be induced due to the PML truncation, we assume that
	\be
	\label{assu}
	\begin{cases}
		\widetilde{\bm E}_1^{\rm sca}= \widetilde{\bm H}_1^{\rm sca}=\bm0 & \mbox{on}\quad \Gamma^+, \cr
		\widetilde{\bm E}_2^{\rm sca}= \widetilde{\bm H}_2^{\rm sca}=\bm0 & \mbox{on}\quad \Gamma^-.
	\end{cases}
	\en
	

	
	\subsection{BIEs}
	\label{sec3.3}

	Next, we derive the corresponding BIEs for solving the truncated PML problems. To do this, we define the following PML transformed layer potentials and BIOs
	\be
	\label{PMLSLP}
	&&\widetilde S_j(\bm\varphi)(\bm x)=\mathbb{B}(\bm x)\widetilde \nabla_{\bm x}\times\widetilde \nabla_{\bm x}\times\int_{\Gamma^b} \widetilde\Phi(k_j,\bm x,\bm y)\mathbb{B}(\bm y)\bm\varphi(\bm y)ds_{\bm y}, \bm x\in\Omega_j^b,\\
	\label{PMLDLP}
	&&\widetilde D_j(\bm\varphi)(\bm x)=\mathbb{B}(\bm x)\widetilde \nabla_{\bm x}\times\int_{\Gamma^b} \widetilde\Phi(k_j,\bm x,\bm y)\mathbb{B}(\bm y)\bm\varphi(\bm y)ds_{\bm y}, \bm x\in\Omega_j^b,\\
	\label{PMLSIO}
	&&\widetilde N_j(\bm\varphi)(\bm x)=\bm {\nu_x}\times \mathbb{B}(\bm x)\widetilde \nabla_{\bm x}\times\widetilde \nabla_{\bm x}\times\int_{\Gamma^b} \widetilde\Phi(k_j,\bm x,\bm y)\mathbb{B}(\bm y)\bm\varphi(\bm y)ds_{\bm y}, \bm x\in\Gamma^b,\\
	\label{PMLDIO}
	&&\widetilde K_j(\bm\varphi)(\bm x)=\bm{\nu_x}\times \mathbb{B}(\bm x)\widetilde \nabla_{\bm x}\times\int_{\Gamma^b} \widetilde\Phi(k_j,\bm x,\bm y)\mathbb{B}(\bm y)\bm\varphi(\bm y)ds_{\bm y}, \bm x\in\Gamma^b,
	\en
	where $\widetilde\Phi(k_j,\bm x,\bm y)$ denotes the PML-transformed free-space fundamental solution that takes the form~\cite{LLQ18,LXYZ23}
	\ben
	\widetilde\Phi(k_j,\bm x,\bm y)=\Phi(k_j,\widetilde {\bm x},\widetilde {\bm y})=\frac{{\rm exp}\left(ik\rho(\widetilde {\bm x},\widetilde{\bm y})\right)}{4\pi\rho(\widetilde {\bm x},\widetilde {\bm y})}
	\enn
	with $\rho$ denoting the complex distance function given by
	\ben
	\rho(\widetilde{\bm x},\widetilde{\bm y})=\left(\sum\limits_{j=1}^3 \left( {\widetilde{x}}_j-{\widetilde {y}}_j\right)^2 \right)^{1/2}.
	\enn
	Here, the half-power operator $z^{1/2}$ is chosen to be the branch of $\sqrt z$ with nonnegative real part for $z\in\mathcal \C\backslash \left( -\infty,  \right.\left. 0 \right]$ such that $\mathrm{arg}\sqrt{z}\in (-\frac{\pi}{2},\frac{\pi}{2}]$.

	Next, the jump relations associated with PML transformed layer potentials and BIOs and the integral representation of the solutions to the problem (\ref{PMLP})-(\ref{PMLB}) will be investigated. Before that, we introduced some auxiliary results.
	
	\begin{lemma}
		\label{The3.1}
		For any $\bm x$, $\bm y\in\Gamma^b$, there exists a positive constant $L>0$ such that
		\be
		\label{T1}
		\left|(\bm{\nu_y},\bm {\widetilde x}-\bm{\widetilde y})\right|\le L\left|\bm{x-y}\right|^2
		\en
		and
		\be
		\label{T2}
		\left|\bm{\nu_x}-\bm{\nu_y}\right|\le L\left|\bm x-\bm y\right|.
		\en
	\end{lemma}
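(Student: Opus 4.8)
Estimate (\ref{T2}) is just the classical Lipschitz property of the Gauss map: since $\Gamma$ is smooth and $\overline{\Gamma^b}$ is compact, the map $\bm y\mapsto\bm{\nu_y}$ is $C^\infty$ on a neighbourhood of $\overline{\Gamma^b}$, hence Lipschitz there, which gives (\ref{T2}) with $L$ any bound on its gradient. The content is therefore in (\ref{T1}). The plan is to split $\bm{\widetilde x}-\bm{\widetilde y}$ componentwise using (\ref{CCS}), namely $\widetilde{x}_l-\widetilde{y}_l=(x_l-y_l)+i\int_{y_l}^{x_l}\sigma_l(t)\,dt$, and then apply the first mean value theorem for integrals (each $\sigma_l$ is continuous) to write $\int_{y_l}^{x_l}\sigma_l(t)\,dt=\sigma_l(\eta_l)(x_l-y_l)$ for some $\eta_l$ between $x_l$ and $y_l$. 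This yields
\ben
(\bm{\nu_y},\bm{\widetilde x}-\bm{\widetilde y})&=&(\bm{\nu_y},\bm x-\bm y)+i\sum_{l=1}^{3}(\bm{\nu_y})_l\,\sigma_l(\eta_l)\,(x_l-y_l).
\enn
The term $(\bm{\nu_y},\bm x-\bm y)$ is $O(|\bm x-\bm y|^2)$ by the standard estimate for a $C^2$ surface (locally represent $\Gamma$ as a graph over the tangent plane at $\bm y$ with vanishing gradient at $\bm y$ and Taylor expand; for $|\bm x-\bm y|$ larger than a fixed $\delta_0$ use instead the trivial bound $|(\bm{\nu_y},\bm x-\bm y)|\le|\bm x-\bm y|\le\delta_0^{-1}|\bm x-\bm y|^2$), so it remains only to control the second sum.

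For that sum I would decompose $\sigma_l(\eta_l)=\sigma_l(y_l)+\bigl(\sigma_l(\eta_l)-\sigma_l(y_l)\bigr)$. Each $\sigma_l$ is Lipschitz — it vanishes for $|t|\le a_l$, equals $S$ for $t\ge a_l+T_l$, and on $[a_l,a_l+T_l]$ it is a smooth function of $x_l$ since by (\ref{sigma}) the denominator $f_1^P+f_2^P$ stays bounded away from $0$ — and $|\eta_l-y_l|\le|x_l-y_l|$, so the difference terms contribute $\bigl|\sum_l(\bm{\nu_y})_l(\sigma_l(\eta_l)-\sigma_l(y_l))(x_l-y_l)\bigr|\le C\sum_l|x_l-y_l|^2\le C|\bm x-\bm y|^2$. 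It then suffices to prove that the frozen-coefficient term $\sum_{l=1}^{3}(\bm{\nu_y})_l\,\sigma_l(y_l)\,(x_l-y_l)$ vanishes identically on $\Gamma^b$, and this is where the geometry of the configuration enters. Since the local defect of $\Pi$ (and, in the impenetrable case, $U_0$) lies inside the box $B_a$, the portion of $\Gamma$ reaching into the PML coincides with the horizontal plane $\Pi$: if $\bm y\in\Gamma\cap B_a$ then $|y_l|<a_l$ and hence $\sigma_l(y_l)=0$ for all $l$; if $\bm y\in\Gamma^b\setminus B_a$ then $\Gamma=\Pi$ near $\bm y$, so $\bm{\nu_y}=\pm\bm e_3$, only the $l=3$ summand survives, and $y_3$ equals the fixed height of $\Pi$, which lies in $(-a_3,a_3)$, so $\sigma_3(y_3)=0$ as well. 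In both cases the frozen-coefficient term is zero, and collecting the three bounds yields (\ref{T1}).

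The only genuinely non-routine step is this vanishing; equivalently, the identity $\diag\bigl(\sigma_1(y_1),\sigma_2(y_2),\sigma_3(y_3)\bigr)\bm{\nu_y}=\bm 0$ for every $\bm y\in\Gamma^b$. What must be stated explicitly to make it work is the standing geometric hypothesis that the perturbation of $\Pi$ (and $U_0$) sits strictly inside $B_a$ and that the interface height stays within the unstretched slab $|x_3|<a_3$; under these assumptions the PML stretching is inactive in the direction normal to $\Gamma^b$, which is precisely the structural fact underlying (\ref{T1}) and, later, the weak singularity of the PML-transformed kernels.
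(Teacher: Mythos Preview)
Your argument is correct and in fact more complete than the paper's. The paper disposes of (\ref{T1}) in one line: it quotes the classical surface estimate $|(\bm{\nu_y},\bm x-\bm y)|\le L_1|\bm x-\bm y|^2$ and then asserts $|(\bm{\nu_y},\bm{\widetilde x}-\bm{\widetilde y})|\le(1+2S)|(\bm{\nu_y},\bm x-\bm y)|$ ``using (\ref{CCS}).'' That intermediate inequality does not follow from (\ref{CCS}) alone. Writing $\widetilde x_l-\widetilde y_l=(1+i\beta_l)(x_l-y_l)$, the factors $\beta_l$ differ from coordinate to coordinate, so the sign cancellation that makes $(\bm{\nu_y},\bm x-\bm y)=O(|\bm x-\bm y|^2)$ need not persist in $(\bm{\nu_y},\bm{\widetilde x}-\bm{\widetilde y})$; indeed, for a smooth bump whose tangent plane at some interior point $\bm y\in B_a$ meets the flat part of $\Gamma^b$ at a point $\bm x$ with $|x_1|>a_1$, the real part vanishes while the imaginary part $\nu_1\!\int_{y_1}^{x_1}\sigma_1$ does not. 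Your decomposition via the mean value theorem and the Lipschitz bound $|\sigma_l(\eta_l)-\sigma_l(y_l)|\le C|x_l-y_l|$ isolates exactly the obstruction, and your observation that $\sigma_l(y_l)(\bm{\nu_y})_l=0$ for every $l$ and every $\bm y\in\Gamma^b$ is precisely the geometric input that makes (\ref{T1}) true. The payoff of your route is that it names the standing hypothesis --- perturbation and interface height confined to $B_a$, so the PML stretching is inactive in the normal direction --- on which both (\ref{T1}) and the subsequent weak-singularity estimates rest.
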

	\begin{proof}
		From~\cite{SS112}, we know that there exists a positive constant $L_1>0$ such that
		\ben
		&&\left|(\bm{\nu_y},\bm x-\bm y)\right|\le L_1\left|\bm{x-y}\right|^2,\quad\left|\bm{\nu_x}-\bm{\nu_y}\right|\le L_1\left|\bm x-\bm y\right|
		\enn
		for $\bm x$, $\bm y\in\Gamma^b$. Using (\ref{CCS}), we can obtain that
		\ben
		\left|(\bm{\nu_y},\bm {\widetilde x}-\bm{\widetilde y})\right|\le \left(1+2S\right)\left|(\bm{\nu_y},\bm x-\bm y)\right|\le L\left|\bm{x-y}\right|^2.
		\enn
		The proof is complete.
	\end{proof}

	\begin{lemma}
		\label{The3.2}
		The following equalities hold	
		\ben
		\int_{\partial\Omega_1^b}\bm{\nu_y}^\top \mathbb A^{-1}(\bm y)\nabla_{\bm y}\widetilde\Phi_0(\bm x,\bm y)ds_{\bm y}=\begin{cases}
			-1, &  \bm x\in\Omega_1^b,\cr
			-\frac{1}{2}, & \bm x\in\Gamma^b,
		\end{cases}
		\enn
		where $\widetilde\Phi_0(\bm x,\bm y)$ denotes the Green's function of the stretched Laplace equation, i.e.,
		\ben
		\nabla_{\bm x} \cdot (\mathbb A^{-1}(\bm x)\nabla_{\bm x}\widetilde\Phi_0(\bm x,\bm y))=-\delta(\bm x-\bm y),
		\enn
		with $\delta$ being the Dirac function. 	
	\end{lemma}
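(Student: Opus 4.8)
The plan is to prove this identity as the PML-stretched counterpart of the classical Gauss flux (solid‑angle) formula, the new ingredient being a complex Gauss‑type angular integral. Throughout I will use that the Green's function characterized by the stated equation has the explicit closed form
$\widetilde\Phi_0(\bm x,\bm y)=\frac{1}{4\pi\rho(\widetilde{\bm x},\widetilde{\bm y})}$, i.e. the $k=0$ specialization of the kernel $\widetilde\Phi(k_j,\bm x,\bm y)$ introduced above (cf. \cite{LLQ18,LXYZ23}). Using $\sigma_l\ge 0$ and the monotonicity of $x_l\mapsto\int_0^{x_l}\sigma_l$, one checks $\rho(\widetilde{\bm x},\widetilde{\bm y})^2\notin(-\infty,0]$ whenever $\bm y\ne\bm x$, so $\widetilde\Phi_0(\bm x,\cdot)$ is smooth on a neighbourhood of $\overline{\Omega_1^b}\setminus\{\bm x\}$; and since $\nabla_{\bm y}\cdot(\mathbb{A}^{-1}(\bm y)\nabla_{\bm y}v)=J(\bm y)\,\widetilde\Delta_{\bm y}v$ while $\widetilde\Delta_{\bm y}(1/\rho)=0$ off the diagonal, $\widetilde\Phi_0(\bm x,\cdot)$ solves the homogeneous stretched Laplace equation on that set.

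For $\bm x\in\Omega_1^b$ I would fix $\epsilon>0$ with $\overline{B_\epsilon(\bm x)}\subset\Omega_1^b$ and apply the divergence theorem to the smooth divergence‑free field $\mathbb{A}^{-1}(\bm y)\nabla_{\bm y}\widetilde\Phi_0(\bm x,\bm y)$ on $\Omega_1^b\backslash\overline{B_\epsilon(\bm x)}$; the bulk term vanishes, and after accounting for the orientations of the two boundary components this leaves
$$\int_{\partial\Omega_1^b}\bm{\nu_y}^\top\mathbb{A}^{-1}(\bm y)\nabla_{\bm y}\widetilde\Phi_0(\bm x,\bm y)\,ds_{\bm y}=\int_{\partial B_\epsilon(\bm x)}\bm{\nu_y}^\top\mathbb{A}^{-1}(\bm y)\nabla_{\bm y}\widetilde\Phi_0(\bm x,\bm y)\,ds_{\bm y},$$
with $\bm{\nu_y}$ the outward normal of $B_\epsilon(\bm x)$ on the right. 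For $\bm x\in\Gamma^b$ I would instead remove $B_\delta(\bm x)\cap\Omega_1^b$ and let $\delta\to0^+$: by Lemma~\ref{The3.1} the left‑hand integrand is $O(|\bm x-\bm y|^{-1})$, hence absolutely integrable over the surface, so the left‑hand side converges to the asserted direct value, while the excised boundary contributes only over the spherical cap $\partial B_\delta(\bm x)\cap\Omega_1^b$, which in the limit is a hemisphere.

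It then remains to evaluate the small‑sphere (resp. cap) flux as $\epsilon\to0^+$. Parametrizing $\bm y=\bm x+\epsilon\bm\omega$, $\bm\omega\in\mathbb{S}^2$, and Taylor‑expanding $\widetilde{x}_l-\widetilde{y}_l=-\epsilon\,\alpha_l(x_l)\,\omega_l+O(\epsilon^2)$ gives $\rho(\widetilde{\bm x},\widetilde{\bm y})^2=\epsilon^2\,\bm\omega^\top\mathbb{B}(\bm x)^2\bm\omega+O(\epsilon^3)$; inserting $\mathbb{A}^{-1}=J\mathbb{B}^{-2}$ and $\nabla_{\bm y}\widetilde\Phi_0=\mathbb{B}(\bm y)(\widetilde{\bm x}-\widetilde{\bm y})/(4\pi\rho^3)$ together with $ds_{\bm y}=\epsilon^2\,d\bm\omega$, the integrand times $ds_{\bm y}$ collapses to $-J(\bm x)\big(4\pi(\bm\omega^\top\mathbb{B}(\bm x)^2\bm\omega)^{3/2}\big)^{-1}d\bm\omega+O(\epsilon)$. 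Since $\bm\omega^\top\mathbb{B}(\bm x)^2\bm\omega$ never meets $(-\infty,0]$, the elementary identity $\int_{\mathbb{S}^2}(\bm\omega^\top M\bm\omega)^{-3/2}\,d\bm\omega=4\pi/\sqrt{\det M}$ valid for real symmetric positive definite $M$ extends by analytic continuation to $M=\mathbb{B}(\bm x)^2$, where $\sqrt{\det M}=J(\bm x)$; this yields the limit $-1$ over $\mathbb{S}^2$, and $-1/2$ over a hemisphere because the integrand is even in $\bm\omega$. Combining with the reduction of the previous paragraph gives the two stated values.

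The hard part will be this last step: justifying the passage to the limit in the complex‑stretched singular integral — controlling the $O(\epsilon)$ remainders uniformly in $\bm\omega$, choosing the branch of $(\bm\omega^\top\mathbb{B}(\bm x)^2\bm\omega)^{1/2}$ consistently with the branch used in $\rho$, and carrying the real Gauss identity through the continuation — together with the orientation bookkeeping of the two boundary pieces and the verification (via Lemma~\ref{The3.1}) of absolute convergence in the case $\bm x\in\Gamma^b$. Once these are in place the argument is a routine transcription of the classical flux‑formula proof. (An essentially equivalent alternative route is to change variables $\bm y\mapsto\widetilde{\bm y}$, recognize the resulting surface integral as the classical Laplace double‑layer integral with unit density over the complexified surface $\{\widetilde{\bm y}:\bm y\in\partial\Omega_1^b\}$, and deform that surface back to the real one by Cauchy's theorem, the singular set $\rho(\widetilde{\bm x},\widetilde{\bm y})=0$ never being crossed; the sphere‑excision computation is then hidden in the local analysis near $\widetilde{\bm x}$.)
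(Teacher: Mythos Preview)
Your proposal is correct and follows essentially the same architecture as the paper: reduce the boundary integral to a small--sphere (resp.\ hemispherical cap) flux via the divergence theorem, Taylor--expand $\widetilde{\bm x}-\widetilde{\bm y}$ and $\rho$ in the radius, and evaluate the resulting angular integral. Two tactical differences are worth noting. For the interior case $\bm x\in\Omega_1^b$ the paper simply invokes the distributional identity $\nabla_{\bm y}\!\cdot(\mathbb A^{-1}\nabla_{\bm y}\widetilde\Phi_0)=-\delta$ and integrates over $\Omega_1^b$, whereas you excise $B_\epsilon(\bm x)$ and compute the sphere flux directly; your route is more self--contained (it does not presuppose the delta identity, which is really equivalent to the sphere computation) at the cost of a little extra bookkeeping. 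For the angular integral the paper carries out the explicit $(\theta,\phi)$ integration, reducing it to $\int d\!\left(\arctan\!\big(\tfrac{\alpha_2}{\alpha_1}\tan\theta\big)\right)$, while you appeal to the closed form $\int_{\mathbb S^2}(\bm\omega^\top M\bm\omega)^{-3/2}\,d\bm\omega=4\pi/\sqrt{\det M}$ extended by analytic continuation; this is cleaner and immediately explains the factor $1/J(\bm x)$, though it does require the branch/avoidance argument you flag (and which the paper's inequality $|\bm x-\bm y|\le|\rho(\widetilde{\bm x},\widetilde{\bm y})|$ furnishes).
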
	
	\begin{proof}	
		Let
		\ben
		w(\bm x)&&=\int_{\partial\Omega_1^b}\bm{\nu_y}^\top \mathbb A^{-1}(\bm y)\nabla_{\bm y}\widetilde\Phi_0(\bm x,\bm y)ds_{\bm y},\quad \bm x\in \Omega_1^b.
		\enn
		Then using Green's theorem yields
		\ben
		w(\bm x)&&=\int_{\Omega_1^b}\nabla_{\bm y} \cdot (\mathbb A^{-1}(\bm y)\nabla_{\bm y}\widetilde\Phi_0(\bm x,\bm y))d{\bm y}\\
		&&=-\int_{\Omega_1^b}\delta(\bm x-\bm y)d{\bm y}=-1,\quad\bm x\in \Omega_1^b.
		\enn
		For $\bm x\in\Gamma^b$, a further application of Gauss' theorem shows that
		\ben
		\int_{\partial\Omega_1^b}\bm{\nu_y}^\top \mathbb A^{-1}(\bm y)\nabla_{\bm y}\widetilde\Phi_0(\bm x,\bm y)ds_{\bm y}=\lim_{r\to 0^+}\int_{\partial B(x,r)\cap\Omega_1^b}\bm{\nu_y}^\top \mathbb A^{-1}(\bm y)\nabla_{\bm y}\widetilde\Phi_0(\bm x,\bm y)ds_{\bm y},
		\enn
		where $\partial B(\bm x,r)$ denotes the surface of the sphere $B(\bm x,r)$ of radius $r$ centered at $\bm x$ (without loss of generality, we assume $\bm\nu_{\bm x}=(0,0,-1)^\top$), and here the unit normal vector have the form $\bm{\nu_y}=(\sin(\phi)\cos(\theta),\sin(\phi)\sin(\theta),\cos(\phi))^\top$ for $\bm y=\bm x+r(\sin(\phi)\cos(\theta), \sin(\phi)\sin(\theta), \cos(\phi))^\top \in\partial B(\bm x,r)$. For a sufficiently small radius $r$, we can specify $\phi\in[0,\frac{\pi}{2}]$ and $\theta\in[0,2\pi]$ for $\bm y\in \partial B(\bm x,r)\cap\Omega_1^b$. By (\ref{CCS}), we can obtain that
		\ben
		&&\widetilde y_1-\widetilde x_1=\int_{x_1}^{y_1}\alpha_1(s)ds=\alpha_1(x_1)r\sin(\phi)\cos(\theta)+O(r^2),\\
		&&\widetilde y_2-\widetilde x_2=\int_{x_2}^{y_2}\alpha_2(s)ds=\alpha_2(x_2)r\sin(\phi)\sin(\theta)+O(r^2),\\
		&&\widetilde y_3-\widetilde x_3=\int_{x_3}^{y_3}\alpha_3(s)ds=\alpha_3(x_3)r\cos(\phi)+O(r^2).
		\enn
		Then we have
		\ben
		&&\lim_{r\to 0^+}\int_{\partial B(x,r)\cap\Omega_1^b}\bm{\nu_y}^\top \mathbb A^{-1}(\bm y)\nabla_{\bm y}\widetilde\Phi_0(\bm x,\bm y)ds_{\bm y}=-\frac{1}{4\pi}\lim_{r\to 0^+}\int_{0}^{2\pi}\int_{0}^{\frac{\pi}{2}}d\theta d\phi\\
		&&\frac{\alpha_1(x_1)\alpha_2(x_2)\alpha_3(x_3)r^3\sin(\phi)+O(r^4)}{r^3\left[\alpha_1^2(x_1)\sin^2(\phi)\cos^2(\theta)+\alpha_2^2(x_2)\sin^2(\phi)\sin^2(\theta)+\alpha_3^2(x_3)\cos^2(\phi) \right]^\frac{3}{2}+O(r^4)} \\
		&&=-\frac{1}{4\pi}\int_{0}^{2\pi}\int_{0}^{\frac{\pi}{2}}d\theta d\phi\\
		&&\frac{\alpha_1(x_1)\alpha_2(x_2)\alpha_3(x_3)\sin(\phi)}{\left[\alpha_1^2(x_1)\sin^2(\phi)\cos^2(\theta)+\alpha_2^2(x_2)\sin^2(\phi)\sin^2(\theta)+\alpha_3^2(x_3)\cos^2(\phi) \right]^\frac{3}{2}}\\
		&&=-\frac{1}{4\pi}\left(\int_{0}^{\frac{\pi}{2}}+\int_{\frac{\pi}{2}}^{\pi}+\int_{\pi}^{\frac{3\pi}{2}}+\int_{\frac{3\pi}{2}}^{2\pi}\right)d\left(\arctan\left(\frac{\alpha_2(x_2)}{\alpha_1(x_1)}\tan(\theta)\right)\right)=-\frac{1}{2}.
		\enn
		Hence,
		\ben
		\int_{\partial\Omega_1^b}\bm{\nu_y}^\top \mathbb A^{-1}(\bm y)\nabla_{\bm y}\widetilde\Phi_0(\bm x,\bm y)ds_{\bm y}=-\frac{1}{2},\quad \bm x\in \Gamma^b.
		\enn
		The proof is complete.	
	\end{proof}

	\begin{lemma}
		\label{The3.3}
		Let $\varphi\in C^1(\Gamma^b)$ satisfy $\varphi=0$ on $\partial\Omega_1^b\backslash\Gamma^b$. Define the potentials
		\ben
		&&v_1(\varphi)(\bm z)=\int_{\Gamma^b}\bm{\nu_y}^\top\mathbb A^{-1}(\bm y)\nabla_{\bm y}\widetilde\Phi_0(\bm z,\bm y)\varphi(\bm y)ds_{\bm y},\\
		&&v_2(\varphi)(\bm z)=\int_{\Gamma^b}\bm{\nu_y}^\top\mathbb A^{-1}(\bm y)\nabla_{\bm y}\widetilde\Phi(k,\bm z,\bm y)\varphi(\bm y)ds_{\bm y}.
		\enn
		The following jump relations hold:
		\be
		\label{K0JR}
		&&\lim_{h\to 0^+,\bm z=\bm x-h\bm{\nu_x}}v_1(\varphi)(\bm z)=v_1(\varphi)(\bm x)-\frac{1}{2}\varphi(\bm x),\quad \bm x\in\Gamma^b,\\
		\label{KJR}
		&&\lim_{h\to 0^+,\bm z=\bm x-h\bm{\nu_x}}v_2(\varphi)(\bm z)=v_2(\varphi)(\bm x)-\frac{1}{2}\varphi(\bm x),\quad \bm x\in \Gamma^b.
		\en
	\end{lemma}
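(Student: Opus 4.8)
The plan is to mimic the classical proof of the double-layer jump relations, with Lemma~\ref{The3.1} replacing the usual surface-geometry estimates and Lemma~\ref{The3.2} replacing the ``solid-angle'' identity, and then to discard the Helmholtz part of $\widetilde\Phi$ as a continuous perturbation. First I would treat (\ref{K0JR}). Writing $K_0(\bm z,\bm y):=\bm{\nu_y}^\top\mathbb A^{-1}(\bm y)\nabla_{\bm y}\widetilde\Phi_0(\bm z,\bm y)$ and noting that the stretching argument behind $\widetilde\Phi(k,\bm x,\bm y)=\Phi(k,\widetilde{\bm x},\widetilde{\bm y})$ also gives $\widetilde\Phi_0(\bm x,\bm y)=\frac{1}{4\pi\rho(\widetilde{\bm x},\widetilde{\bm y})}$, one computes $\nabla_{\bm y}\widetilde\Phi_0(\bm z,\bm y)=\frac{\mathbb B(\bm y)(\widetilde{\bm z}-\widetilde{\bm y})}{4\pi\rho(\widetilde{\bm z},\widetilde{\bm y})^{3}}$, so that, using $\mathbb A=J^{-1}\mathbb B^2$,
\[
  K_0(\bm z,\bm y)=\frac{J(\bm y)}{4\pi\rho(\widetilde{\bm z},\widetilde{\bm y})^{3}}\,\bm{\nu_y}^\top\mathbb B^{-1}(\bm y)(\widetilde{\bm z}-\widetilde{\bm y}).
\]
Since $\bm{\nu_y}^\top\mathbb B^{-1}(\bm y)(\widetilde{\bm z}-\widetilde{\bm y})=(\bm{\nu_y},\bm z-\bm y)+O(|\bm z-\bm y|^2)$ and $|\rho(\widetilde{\bm z},\widetilde{\bm y})|\gtrsim|\bm z-\bm y|$ near the diagonal, Lemma~\ref{The3.1} shows that for $\bm z\in\Gamma^b$ the kernel $K_0$ is only weakly singular, $|K_0(\bm z,\bm y)|\lesssim|\bm z-\bm y|^{-1}$.

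Next I would use the classical split
\[
  v_1(\varphi)(\bm z)=\int_{\Gamma^b}K_0(\bm z,\bm y)\bigl(\varphi(\bm y)-\varphi(\bm x)\bigr)ds_{\bm y}+\varphi(\bm x)\int_{\Gamma^b}K_0(\bm z,\bm y)ds_{\bm y}.
\]
For the first term, with $\bm z=\bm x-h\bm{\nu_x}$ one writes $(\bm{\nu_y},\bm z-\bm y)=-h+O(|\bm x-\bm y|^2)+O(h|\bm x-\bm y|^2)$ (using $|\bm{\nu_x}-\bm{\nu_y}|\le L|\bm x-\bm y|$ and $(\bm{\nu_x},\bm x-\bm y)=O(|\bm x-\bm y|^2)$ from Lemma~\ref{The3.1}) and $|\varphi(\bm y)-\varphi(\bm x)|\le C|\bm x-\bm y|$; together with $|\bm z-\bm y|^2\gtrsim|\bm x-\bm y|^2+h^2$ this produces an $h$-uniform integrable majorant, so by dominated convergence the first term is continuous as $h\to0^+$, with limit $\int_{\Gamma^b}K_0(\bm x,\bm y)(\varphi(\bm y)-\varphi(\bm x))ds_{\bm y}$. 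For the second term I would use $\partial\Omega_1^b=\Gamma^b\cup\Gamma^+$ and the fact that $\widetilde\Phi_0(\bm z,\cdot)$ is smooth on $\Gamma^+$ for $\bm z$ near $\bm x\in\Gamma^b$, so that $\int_{\Gamma^+}K_0(\bm z,\bm y)ds_{\bm y}$ is continuous in $\bm z$; then Lemma~\ref{The3.2}, which gives $\int_{\partial\Omega_1^b}K_0(\bm z,\bm y)ds_{\bm y}=-1$ for $\bm z\in\Omega_1^b$ and $=-\frac{1}{2}$ for $\bm z=\bm x$, yields
\[
  \lim_{h\to0^+}\int_{\Gamma^b}K_0(\bm x-h\bm{\nu_x},\bm y)ds_{\bm y}=-1-\int_{\Gamma^+}K_0(\bm x,\bm y)ds_{\bm y}=-\frac{1}{2}+\int_{\Gamma^b}K_0(\bm x,\bm y)ds_{\bm y}.
\]
Adding the two limits and recombining gives $v_1(\varphi)(\bm x)-\frac{1}{2}\varphi(\bm x)$, i.e. (\ref{K0JR}). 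To pass from $v_1$ to $v_2$, I would split $\widetilde\Phi(k,\bm z,\bm y)=\widetilde\Phi_0(\bm z,\bm y)+R(k,\bm z,\bm y)$ with $R(k,\bm z,\bm y)=\frac{e^{ik\rho(\widetilde{\bm z},\widetilde{\bm y})}-1}{4\pi\rho(\widetilde{\bm z},\widetilde{\bm y})}$; since $\rho(\widetilde{\bm z},\widetilde{\bm y})^2=\sum_j(\widetilde z_j-\widetilde y_j)^2$ is a smooth function of $(\bm z,\bm y)$ by (\ref{CCS}) and $s\mapsto(e^{is}-1)/s$ is entire, both $R$ and $\nabla_{\bm y}R$ stay bounded as $\bm y\to\bm z$, so $\bm{\nu_y}^\top\mathbb A^{-1}(\bm y)\nabla_{\bm y}R(k,\bm z,\bm y)$ is a continuous kernel on $\Gamma^b\times\Gamma^b$ and the potential it generates is continuous across $\Gamma^b$; hence $v_2(\varphi)$ has the same one-sided limit as $v_1(\varphi)$ up to that continuous term, which gives (\ref{KJR}).

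The step I expect to be the main obstacle is the $h$-uniform domination of the first (``regularized'') integral in the split of $v_1$: one must extend the estimates of Lemma~\ref{The3.1}, stated for points of $\Gamma^b$, to the off-surface point $\bm z=\bm x-h\bm{\nu_x}$, isolate the genuinely weakly-singular contribution (the $O(|\bm x-\bm y|^2)$ numerator, tamed by $\varphi\in C^1$) from the $O(h)$ contribution (an approximate identity whose integral is $o(1)$), and check $|\bm z-\bm y|\gtrsim(|\bm x-\bm y|^2+h^2)^{1/2}$ and $\rho(\widetilde{\bm z},\widetilde{\bm y})\neq0$ for $\bm z\neq\bm y$. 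A secondary point is the continuity of $\bm z\mapsto\int_{\Gamma^+}K_0(\bm z,\cdot)\,ds$ near the common edge of $\Gamma^b$ and $\Gamma^+$, where the hypothesis $\varphi=0$ on $\partial\Omega_1^b\backslash\Gamma^b$ is exactly what keeps the $\varphi$-weighted integrals bounded.
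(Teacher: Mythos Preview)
Your proposal is correct and follows essentially the same strategy as the paper's proof: both show the stretched Laplace kernel is weakly singular via Lemma~\ref{The3.1}, use the classical subtraction $\varphi(\bm y)-\varphi(\bm x)$ together with Lemma~\ref{The3.2} to extract the $-\tfrac12\varphi(\bm x)$ jump, and then argue that the Helmholtz remainder $\widetilde\Phi-\widetilde\Phi_0$ contributes a bounded kernel generating a potential with no jump. The only differences are cosmetic---the paper absorbs the $\Gamma^+$-integral directly into $w(\bm z):=\int_{\partial\Omega_1^b}K_0\,ds_{\bm y}$ rather than treating it separately, and proves continuity of the regularized term $u_1$ by an explicit $\epsilon$--$\delta$ ball decomposition (using only continuity of $\varphi$) instead of your dominated-convergence argument (which exploits the full $C^1$ regularity), but the underlying estimates and the obstacle you flag are the same.
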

	\begin{proof}
		We first prove the jump relation (\ref{K0JR}). Introduce the notation $D_h=\left\{\bm z\in\Omega_1^b|\bm z=\bm x-h\bm{\nu_x},\bm x\in\Gamma^b,h>0\right\}$. By Lemma~\ref{The3.1} and the inequality
		\be
		\label{xy1}
		|\bm x-\bm y|\le |\rho(\widetilde{\bm x},\widetilde{\bm y})|\le\left(1+2S\right)|\bm x-\bm y|,
		\en
		there exists a constant $L>0$ such that
		\ben
		\left|\bm{\nu_y}^\top\mathbb A^{-1}(\bm y)\nabla_{\bm y}\widetilde\Phi_0(\bm x,\bm y)\right|=\left|\frac{(\bm{\widetilde\nu_y},\widetilde{\bm x}-\widetilde{\bm y})}{4\pi\rho(\widetilde{\bm x},\widetilde{\bm y})^3}\right|\le \frac{L}{|\bm x-\bm y|},\quad\bm x,\bm y\in\Gamma^b,\quad \bm x\ne\bm y,
		\enn
		where
		\ben \bm{\widetilde{\nu}_y}=(\alpha_2(y_2)\alpha_3(y_3)\nu_y^1,\alpha_1(y_1)\alpha_3(y_3)\nu_y^2,\alpha_1(y_1)\alpha_2(y_2)\nu_y^3)^\top,
		\enn
		i.e., $v_1(\varphi)(\bm x), \bm x\in\Gamma^b$ has a weakly singular kernel. Therefore, the integral exists for $\bm x\in\Gamma^b$ as an improper integral and represents a continuous function on $\Gamma^b$. Define
		\ben
		w(\bm x)=\int_{\partial\Omega_1^b}\bm{\nu_y}^\top\mathbb A^{-1}(\bm y)\nabla_{\bm y}\widetilde\Phi_0(\bm x,\bm y)ds_y, \quad \bm x\in\Omega_1^b.
		\enn
		Then $v_1$ can be rewritten as
		\ben
		v_1(\varphi)(\bm z)=\varphi(\bm x)w(\bm z)+u_1(\bm z),\quad \bm x\in\Gamma^b,\quad\bm z=\bm x-h\bm{\nu_x}\in D_h,
		\enn
		where
		\ben
		\label{u1}
		u_1(\bm z)=\int_{\Gamma^b}\bm{\nu_y}^\top\mathbb A^{-1}(\bm y)\nabla_{\bm y}\widetilde\Phi_0(\bm z,\bm y)\left[\varphi(\bm y)-\varphi(\bm x)\right]ds_y.
		\enn
		Therefore, in view of Lemma~\ref{The3.2}, to show (\ref{K0JR}) it suffices to prove that
		\be
		\label{u1relation}
		\lim_{h\to 0^+,\bm z=\bm x-h\bm{\nu_x}}u_1(\bm z)=u_1(\bm x),\quad \bm x\in\Gamma^b,
		\en
		uniformly on $\Gamma^b$. Let $h$ be sufficiently small which will be specified later, we get
		\ben
		|\widetilde{\bm z}-\widetilde{\bm y}|^2 \ge\frac{1}{2}\left\{|\widetilde{\bm x}-\widetilde{\bm y}|^2+|\widetilde{\bm z}-\widetilde{\bm x}|^2\right\}.
		\enn
		Then using (\ref{T1}) and (\ref{xy1}) yields
		\ben
		\left|\bm{\nu_y}^\top\mathbb A^{-1}(\bm y)\nabla_{\bm y}\widetilde \Phi_0(\bm z,\bm y)\right|\le C_0\left\{\frac{1}{|\bm z-\bm y|}+\frac{|\bm z-\bm x|}{\left[|\bm x-\bm y|^2+|\bm z-\bm x|^2\right]^{\frac{3}{2}}}\right\},
		\enn
		where $C_0>0$ is a constant. For sufficiently small $r$, we can deduce that
		\be
		\label{u1-1}
		\int_{\Gamma^b\cap B(\bm x,r)}\left|\bm{\nu_y}^\top\mathbb A^{-1}(\bm y)\nabla_{\bm y}\widetilde \Phi_0(\bm z,\bm y)\right|ds_{\bm y}&&\le C_0\left\{\int_0^rd\rho+\int_0^\infty\frac{\rho |\bm x-\bm z|d\rho}{\left(\rho^2+|\bm x-\bm z|^2\right)^{3/2}} \right\}\nonumber\\
		&& \le C_1,
		\en
		where $C_1>0$ is a constant. In addition, noting that
		\ben
		\left|\bm{\nu_y}^\top\mathbb A^{-1}(\bm y)\nabla_{\bm y}\left[\widetilde \Phi_0(\bm z,\bm y)-\widetilde \Phi_0(\bm x,\bm y)\right]\right|&&=\frac{1}{4\pi}\left|\frac{(\bm{\widetilde{\nu}_y}, \widetilde{\bm z}-\widetilde{\bm y})}{\rho(\widetilde{\bm z}-\widetilde{\bm y})^3}-\frac{(\bm{\widetilde{\nu}_y},\widetilde{\bm x}-\widetilde{\bm y})}{\rho(\widetilde{\bm x}-\widetilde{\bm y})^3}\right|\\
		&&\le C_2\frac{h}{r^3},
		\enn
		with $C_2>0$ being a constant, we have
		\be
		\label{u1-2}
		\int_{\Gamma^b\backslash B(\bm x,r)}\left|\bm{\nu_y}^\top\mathbb A^{-1}(\bm y)\nabla_{\bm y}\left[\widetilde \Phi_0(\bm z,\bm y)-\widetilde \Phi_0(\bm x,\bm y)\right]\right|ds_{\bm y}\le C_3\frac{h}{r^3}
		\en
		for some constant $C_3>0$. Therefore, combining (\ref{u1-1})-(\ref{u1-2}) implies that
		\ben
		\left|u_1(\bm z)-u_1(\bm x)\right|\le C\left\{\sup_{\bm y\in\Gamma^b\cap B(\bm x,r)}|\varphi(\bm x)-\varphi(\bm y)|+\frac{h}{r^3}\right\}
		\enn
		for some constant $C>0$. Given an arbitrary $\epsilon>0$, we can choose $r>0$ such that
		\ben
		\sup_{\bm y\in\Gamma^b\cap B(\bm x,r)}|\varphi(\bm x)-\varphi(\bm y)|\le\frac{\epsilon}{2C}
		\enn
		for all $\bm x\in\Gamma^b$. Then, taking $h=\frac{\epsilon r^3}{2C}$, we see that $|u_1(\bm z)-u_1(\bm x)|\le\epsilon$. The proof of (\ref{u1relation}) and hence, the proof of (\ref{K0JR}) is complete.
		
		It remains to show the jump relation (\ref{KJR}). For $\bm x\in\Gamma^b$, $\bm z=\bm x-h\bm{\nu_x}\in D_h$, we write
		\be
		\label{V}
		V(\varphi)(\bm z):=v_2(\varphi)(\bm z)-v_1(\varphi)(\bm z)=\int_{\Gamma^b}K(\bm z,\bm y)\varphi(\bm y)ds_{\bm y},
		\en
		where
		\ben
		K(\bm z,\bm y):&&=\bm{\nu_y}^\top\mathbb A^{-1}(\bm y)\nabla_{\bm y}\left[\widetilde\Phi(k,\bm z,\bm y)-\widetilde\Phi_0(\bm z,\bm y)\right]\\
		&&=\frac{(\bm{\widetilde\nu_y},\widetilde{\bm z}-\widetilde{\bm y})}{4\pi\rho(\widetilde{\bm z}-\widetilde{\bm y})^3}\left[e^{ik\rho(\widetilde{\bm z}-\widetilde{\bm y})}-ik\rho(\widetilde{\bm z}-\widetilde{\bm y})e^{ik\rho(\widetilde{\bm z}-\widetilde{\bm y})}-1\right],
		\enn
		which satisfies $|K(\bm z,\bm y)|\le M$ for a constant $M$. Analogous to the proof of \cite[Theorem 6.15]{K14}, it can be deduced that $V(\varphi)$ is continuous in $D_h$. Then the proof of (\ref{KJR}) is complete.
	\end{proof}	
	
	The jump relations associated with PML transformed layer potentials
	and BIOs are concluded in the following theorem which takes an analogous form as Theorem~\ref{classical-jump}.
	
	\begin{theorem}
		\label{lemma1}
		Letting $\bm \varphi \in C^1(\Gamma^b)^3$ be a tangential field satisfying $\bm\varphi=0$ on $\partial\Omega_1^b\backslash\Gamma^b$, it follows that
		\be
		\label{SD1}
		&& \nabla_{\bm x}\times \widetilde S_j(\bm\varphi)(\bm x)=k_j^2\mathbb{A}^{-1}\widetilde D_j(\bm\varphi)(\bm x),\quad\bm x\in\Omega_j^b,\\
		\label{SD2}
		&& \nabla_{\bm x}\times\widetilde D_j(\bm\varphi)(\bm x)=\mathbb{A}^{-1}\widetilde S_j(\bm\varphi)(\bm x),\quad\bm x\in\Omega_j^b,
		\en
		and the following jump relations hold:
		\be
		\label{PMLJR1}
		&&\lim_{h\to 0^+}\bm{\nu_x}\times \widetilde S_1(\bm \varphi)(\bm x- h\bm{\nu_x})=\widetilde N_1 (\bm \varphi)(\bm x),\quad \bm x\in\Gamma^b,\\
		\label{PMLJR2}
		&&\lim_{h\to 0^+}\bm{\nu_x}\times \widetilde D_1(\bm \varphi)(\bm x- h\bm{\nu_x})=(\widetilde K_1-\frac{\mathbb{I}}{2})\bm \varphi(\bm x),\quad \bm x\in\Gamma^b,\\
		\label{PMLJR3}
		&&\lim_{h\to 0^+}\bm {\nu_x}\times \widetilde S_2(\bm \varphi)(\bm x+ h\bm{\nu_x})=\widetilde N_2(\bm \varphi)(\bm x),\quad \bm x\in\Gamma^b,\\
		\label{PMLJR4}
		&&\lim_{h\to 0^+}\bm{\nu_x}\times\widetilde D_2(\bm \varphi)(\bm x+ h\bm{\nu_x})=(\widetilde K_2+\frac{\mathbb{I}}{2})\bm \varphi(\bm x),\quad \bm x\in\Gamma^b.
		\en
	\end{theorem}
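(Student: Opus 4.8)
The plan is to treat the two differential identities (\ref{SD1})--(\ref{SD2}) --- which are purely algebraic --- separately from the four jump relations, which reduce to Lemmas~\ref{The3.1}--\ref{The3.3} together with classical single-layer potential theory. I write $\bm B_j(\bm x):=\int_{\Gamma^b}\widetilde\Phi(k_j,\bm x,\bm y)\,\mathbb{B}(\bm y)\bm\varphi(\bm y)\,ds_{\bm y}$, a smooth vector field on $\Omega_j^b$, so that $\widetilde D_j(\bm\varphi)=\mathbb{B}\,\widetilde\nabla\times\bm B_j$ and $\widetilde S_j(\bm\varphi)=\mathbb{B}\,\widetilde\nabla\times\widetilde\nabla\times\bm B_j$, all differentiations under the integral being licit since $\bm x\in\Omega_j^b$ stays off $\Gamma^b$. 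For (\ref{SD1})--(\ref{SD2}) I would rewrite the definition of $\widetilde\nabla\times$ as $\curl(\mathbb{B}\bm u)=\mathbb{A}^{-1}\mathbb{B}\,(\widetilde\nabla\times\bm u)$ (using $J\mathbb{B}^{-1}=\mathbb{A}^{-1}\mathbb{B}$); applied with $\bm u=\widetilde\nabla\times\bm B_j$ this gives $\nabla_{\bm x}\times\widetilde D_j(\bm\varphi)=\mathbb{A}^{-1}\mathbb{B}\,(\widetilde\nabla\times\widetilde\nabla\times\bm B_j)=\mathbb{A}^{-1}\widetilde S_j(\bm\varphi)$, i.e.\ (\ref{SD2}). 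For (\ref{SD1}), apply it with $\bm u=\widetilde\nabla\times\bm w$, $\bm w:=\widetilde\nabla\times\bm B_j$, to get $\nabla_{\bm x}\times\widetilde S_j(\bm\varphi)=\mathbb{A}^{-1}\mathbb{B}\,(\widetilde\nabla\times\widetilde\nabla\times\bm w)$, and then use the stretched identity $\widetilde\nabla\times\widetilde\nabla\times\bm w=\widetilde\nabla(\widetilde\nabla\cdot\bm w)-\widetilde\Delta\bm w$ (componentwise stretched Laplacian), together with $\widetilde\nabla\cdot\bm w=\widetilde\nabla\cdot(\widetilde\nabla\times\bm B_j)=0$ and the fact that $\widetilde\Phi(k_j,\bm x,\bm y)=\Phi(k_j,\widetilde{\bm x},\widetilde{\bm y})$ solves $\widetilde\Delta_{\bm x}\widetilde\Phi+k_j^2\widetilde\Phi=0$ for $\bm x\ne\bm y$, whence $\widetilde\Delta\bm B_j=-k_j^2\bm B_j$ on $\Omega_j^b$ and $\widetilde\Delta\bm w=\widetilde\nabla\times(\widetilde\Delta\bm B_j)=-k_j^2\bm w$; this yields $\widetilde\nabla\times\widetilde\nabla\times\bm w=k_j^2\bm w$ and hence $\nabla_{\bm x}\times\widetilde S_j(\bm\varphi)=k_j^2\mathbb{A}^{-1}\mathbb{B}\bm w=k_j^2\mathbb{A}^{-1}\widetilde D_j(\bm\varphi)$.

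For the single-layer traces (\ref{PMLJR1}), (\ref{PMLJR3}) I would expand $\widetilde\nabla_{\bm x}\times\widetilde\nabla_{\bm x}\times(\widetilde\Phi\,\mathbb{B}(\bm y)\bm\varphi(\bm y))=\widetilde\nabla_{\bm x}\big(\mathbb{B}(\bm y)\bm\varphi(\bm y)\cdot\widetilde\nabla_{\bm x}\widetilde\Phi\big)+k_j^2\widetilde\Phi\,\mathbb{B}(\bm y)\bm\varphi(\bm y)$, use $\widetilde\nabla_{\bm x}\widetilde\Phi=-\widetilde\nabla_{\bm y}\widetilde\Phi$ and $\mathbb{B}(\bm y)\bm\varphi(\bm y)\cdot\widetilde\nabla_{\bm y}\widetilde\Phi=\bm\varphi(\bm y)\cdot\nabla_{\bm y}\widetilde\Phi$ (diagonality of $\mathbb{B}$), drop the normal component of $\nabla_{\bm y}\widetilde\Phi$ against the tangential $\bm\varphi$, and integrate by parts on the closed surface $\partial\Omega_j^b$ (permissible because $\bm\varphi$ vanishes on $\partial\Omega_j^b\backslash\Gamma^b$). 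With $\mathbb{B}(\bm x)\widetilde\nabla_{\bm x}=\nabla_{\bm x}$ this gives
\[
\widetilde S_j(\bm\varphi)(\bm x)=\nabla_{\bm x}\int_{\Gamma^b}(\mathrm{div}_{\Gamma}\bm\varphi)(\bm y)\,\widetilde\Phi(k_j,\bm x,\bm y)\,ds_{\bm y}+k_j^2\,\mathbb{B}(\bm x)\int_{\Gamma^b}\widetilde\Phi(k_j,\bm x,\bm y)\,\mathbb{B}(\bm y)\bm\varphi(\bm y)\,ds_{\bm y},
\]
with $\mathrm{div}_{\Gamma}$ the surface divergence (this is essentially the regularization of Section~\ref{sec3.4}). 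The second term is a continuous vector single-layer potential; the first is $\nabla_{\bm x}$ of a scalar single-layer potential with density $\mathrm{div}_{\Gamma}\bm\varphi$, whose trace across $\Gamma^b$ is continuous, so only its normal derivative jumps and $\bm{\nu_x}\times\nabla_{\bm x}$ of it is continuous. Hence $\bm{\nu_x}\times\widetilde S_j(\bm\varphi)$ extends continuously to $\Gamma^b$ with limit its direct value $\widetilde N_j(\bm\varphi)$, i.e.\ (\ref{PMLJR1}), (\ref{PMLJR3}). (The classical potential-theoretic facts carry over to the stretched kernel $\widetilde\Phi$ via the weak-singularity estimates of Lemma~\ref{The3.1} and (\ref{xy1}).)

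For the double-layer traces (\ref{PMLJR2}), (\ref{PMLJR4}) I would mimic the proof of Lemma~\ref{The3.3}: with $\bm z=\bm x-h\bm{\nu_x}$ and $\widetilde\nabla_{\bm x}\widetilde\Phi=\tfrac{(ik_1\rho-1)e^{ik_1\rho}}{4\pi\rho^3}(\widetilde{\bm x}-\widetilde{\bm y})$, split $\bm{\nu_x}\times\widetilde D_1(\bm\varphi)(\bm z)$ into a part with the density frozen at $\bm\varphi(\bm x)$ plus a remainder carrying $\bm\varphi(\bm y)-\bm\varphi(\bm x)$. Using $\widetilde{\bm x}-\widetilde{\bm y}=\mathbb{B}(\bm y)(\bm x-\bm y)+O(|\bm x-\bm y|^2)$, $\mathbb{B}(\bm z)-\mathbb{B}(\bm y)=O(h+|\bm x-\bm y|)$, the diagonal identity $\mathbb{B}\,(\mathbb{B}\bm a\times\mathbb{B}\bm b)=J\,(\bm a\times\bm b)$, the expansion $\bm{\nu_x}\cdot(\bm z-\bm y)=\bm{\nu_x}\cdot(\bm x-\bm y)-h$, the tangency $\bm{\nu_x}\cdot\bm\varphi(\bm x)=0$, and Lemma~\ref{The3.1}, the BAC--CAB expansion reorganizes the frozen part into weakly singular (hence continuous) pieces plus a piece that, modulo such pieces, is the scalar $v_2$-potential of Lemma~\ref{The3.3} applied componentwise to the constant $\bm\varphi(\bm x)$; Lemma~\ref{The3.3} then delivers the jump $-\tfrac12\bm\varphi(\bm x)$ and Lemma~\ref{The3.2} (together with the Helmholtz-versus-Laplace weakly singular correction) the direct value, while the $\bm\varphi(\bm y)-\bm\varphi(\bm x)$ remainder is weakly singular and continuous by the $u_1$-argument of Lemma~\ref{The3.3}. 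Assembling these gives (\ref{PMLJR2}); the one-sided analogues of Lemmas~\ref{The3.2}--\ref{The3.3} on $\Omega_2^b$, which flip the $\pm\tfrac12$ because $\bm z$ then approaches $\bm x$ along $+\bm{\nu_x}$, give (\ref{PMLJR4}). I expect the main obstacle to be precisely this last step: reorganizing $\bm{\nu_x}\times\widetilde D_1$ so that the non-commuting diagonal PML matrices $\mathbb{B},\mathbb{A}$ are pushed aside, isolating the single scalar kernel responsible for the $\tfrac12$ jump, and identifying it --- up to weakly singular, continuous errors --- with the $v_2$-potential whose jump Lemma~\ref{The3.3} already computes.
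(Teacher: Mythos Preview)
Your treatment of (\ref{SD1})--(\ref{SD2}) is essentially the paper's: both use $\curl(\mathbb{B}\bm u)=\mathbb{A}^{-1}\mathbb{B}\,\widetilde\nabla\times\bm u$ and the stretched identity $\widetilde\nabla\times\widetilde\nabla\times=-\widetilde\Delta+\widetilde\nabla\widetilde\nabla\cdot$ together with $\widetilde\Delta\widetilde\Phi=-k_j^2\widetilde\Phi$ off the diagonal.

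For the jump relations your route diverges from the paper's. Your argument for (\ref{PMLJR1}) and (\ref{PMLJR3}) via the regularized representation of $\widetilde S_j$ (gradient of a scalar single layer plus a weakly singular term) is correct and in fact tidier than what the paper does; the paper simply declares these ``analogous'' to (\ref{PMLJR2}) and omits details.

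For (\ref{PMLJR2})--(\ref{PMLJR4}) the paper does \emph{not} attempt the global density-freezing plus matrix algebra you outline. Instead it splits on the location of the target point $\bm x\in\Gamma^b$. If $\bm x$ lies in a fixed neighborhood $\Gamma_{loc}^2$ of the physical patch, then $\mathbb{B}(\bm z)=\mathbb{I}$ for $\bm z=\bm x-h\bm{\nu_x}$ with $h$ small, the integral over $\Gamma_{\rm PHY}$ is literally the classical potential to which Theorem~\ref{classical-jump} applies, and the integral over $\Gamma_{\rm PML}$ has a smooth kernel. If $\bm x$ lies in the PML part of $\Gamma^b$, then $\bm{\nu_x}=(0,0,-1)^\top$ is \emph{constant}, and the paper writes $\bm{\nu_x}\times\widetilde D_1(\bm\varphi)(\bm z)$ out component by component as $I_3+I_4+I_5$: the pieces $I_3$ (carrying $\bm{\nu_x}-\bm{\nu_y}$, supported on the compact physical defect) and $I_5$ (carrying $\alpha_l(\bm z)-\alpha_l(\bm y)$) have bounded kernels, while each scalar component of $I_4$ is identified, via $\widetilde\nabla_{\bm y}\widetilde\Phi=-\widetilde\nabla_{\bm z}\widetilde\Phi$, with the potential $v_2$ of Lemma~\ref{The3.3} plus another bounded-kernel correction. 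The $-\tfrac12$ then comes straight from (\ref{KJR}).

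This geometric case split is exactly what dissolves the obstacle you flag at the end: the interaction of $\mathbb{B}(\bm z)$, $\mathbb{B}(\bm y)$ and the cross product is never handled in full generality, because in one region the matrices are the identity and in the other the normal is fixed, allowing a direct scalar, componentwise reduction to Lemma~\ref{The3.3}. Your global approach should also go through, but the ``reorganizing'' step you leave open is genuine work (one must check that after $\mathbb{B}(\bm y)\approx\mathbb{B}(\bm z)$ and the diagonal cross-product identity the leading kernel really is $\bm{\nu_y}^\top\mathbb{A}^{-1}(\bm y)\nabla_{\bm y}\widetilde\Phi$ up to weakly singular remainders); the paper's split makes that step automatic.
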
	
	\begin{proof}
		Noting that $\curl(\mathbb{B}\bm u) = J\mathbb{B}^{-1}\widetilde\nabla\times \bm u= \mathbb{A}\mathbb{B}^{-1} \widetilde\nabla\times \bm u$, we have
		\ben
		&&\nabla_{\bm x}\times \widetilde S_j(\bm\varphi)(\bm x)\\
		&&=\mathbb{A}(\bm x)^{-1}\mathbb{B}(\bm x)\widetilde \nabla_{\bm x}\times\widetilde \nabla_{\bm x}\times\widetilde \nabla_{\bm x}\times\int_{\Gamma^b} \widetilde\Phi(k_j,\bm x,\bm y)\mathbb{B}(\bm y)\bm\varphi(\bm y)ds_{\bm y}\\
		&&=\mathbb{A}(\bm x)^{-1}\mathbb{B}(\bm x)\widetilde \nabla_{\bm x}\times(-\widetilde\Delta_{\bm x}+\widetilde\nabla_{\bm x}\widetilde\nabla_{\bm x}\cdot)\int_{\Gamma^b} \widetilde\Phi(k_j,\bm x,\bm y)\mathbb{B}(\bm y)\bm\varphi(\bm y)ds_{\bm y}\\
		&&=k_j^2\mathbb{A}^{-1}\widetilde D_j(\bm\varphi),
		\enn
		and
		\ben
		&&\nabla_{\bm x}\times \widetilde D_j(\bm\varphi)(\bm x) =\mathbb{A}(\bm x)^{-1}\mathbb{B}(\bm x)\widetilde \nabla_{\bm x}\times\widetilde \nabla_{\bm x}\times\int_{\Gamma^b} \widetilde\Phi(k_j,\bm x,\bm y)\mathbb{B}(\bm y)\bm\varphi(\bm y)ds_{\bm y}\\
		&&=\mathbb{A}^{-1}\widetilde S_j(\bm\varphi),
		\enn
		which completes the proof of (\ref{SD1}) and (\ref{SD2}).
		
		Next, we show (\ref{PMLJR2}). The proof of the other three jump relations follows in an analogous manner and thus is omitted. Let $0<c_1<b_1<a_1$, $0<c_2<b_2<a_2$ and denote $\Gamma_{loc}^1=\{\bm x\in\Gamma^b: |x_1|\le c_1, |x_2|\le c_2 \}$, $\Gamma_{loc}^2=\{\bm x\in\Gamma^b: |x_1|\le b_1, |x_2|\le b_2 \}$ such that $\bm\nu_{\bm x}=(0,0,-1)^\top$ for all $\bm x\in\Gamma^b\backslash\Gamma_{loc}^1$.
		
		For the case $\bm x\in\Gamma_{loc}^2$ and $\bm z=\bm x- h\bm{ \nu_x}$ with sufficiently small $h>0$, we define
		\ben
		\bm{\nu_x}\times \widetilde D_1(\bm \varphi)(\bm z)=I_1+I_2,
		\enn
		with
		\ben
		&&I_1(\bm z)=\bm{\nu_x}\times \mathbb{B}(\bm z)\widetilde \nabla_{\bm z}\times\int_{\Gamma_{\rm PML}} \widetilde\Phi(k_1,\bm z,\bm y)\mathbb{B}(\bm y)\bm\varphi(\bm y)ds_{\bm y},\\
		&&I_2(\bm z)=\bm{\nu_x}\times  \nabla_{\bm z}\times\int_{\Gamma_{\rm PHY}} \Phi(k_1,\bm z,\bm y)\bm\varphi(\bm y)ds_{\bm y},
		\enn
		while noting that $\mathbb B=\mathbb I$ and $\widetilde\Phi(k_1,\bm x,\bm y)=\Phi(k_1,\bm x,\bm y)$ on $\Gamma_{\rm PHY}$. Since the kernel in $I_1$ is non-singular, we obtain from Theorem~\ref{classical-jump} that
		\be
		\label{local2-jump}
		&&\lim_{h\to 0^+}\bm{\nu_x}\times \widetilde D_1(\bm \varphi)(\bm z)= (\widetilde K_1-\frac{\mathbb{I}}{2})\bm \varphi(\bm x),\quad \bm x\in\Gamma_{loc}^2.
		\en
		
		For the case $\bm x\in\Gamma^b\backslash \Gamma_{loc}^2$ and $\bm z=\bm x- h\bm{ \nu_x}$ with sufficiently small $h>0$, we define
		\ben
		\bm{ \nu_x}\times \widetilde D_1(\bm \varphi)(\bm z)=I_3(\bm z)+I_4(\bm z)+I_5(\bm z),
		\enn
		where
		\be
		\label{I3}
		I_3(\bm z)&&=\int_{\Gamma^b}\begin{pmatrix}
			\frac{\partial\widetilde\Phi(k_1,\bm z,\bm y)}{\partial\widetilde z_1}\alpha_3( z_3)\alpha_2( y_2)\\
			\frac{\partial\widetilde\Phi(k_1,\bm z,\bm y)}{\partial\widetilde z_2}\alpha_1( z_1)\alpha_3(y_3)\\
			\frac{\partial\widetilde\Phi(k_1,\bm z,\bm y)}{\partial\widetilde z_3}\alpha_2( z_2)\alpha_1( y_1)
		\end{pmatrix}(\bm{ \nu_x}^\top-\bm{\nu_y}^\top)\bm \varphi(\bm y)ds_{\bm y},\\
		\label{I4}
		I_4(\bm z)&&=-\int_{\Gamma^b}\begin{pmatrix}
			{\bm{ \nu_x}^\top}\mathbb{A}_1(\bm z,\bm y)\nabla_{\bm z}\widetilde\Phi(k_1,\bm z,\bm y) \varphi_1(\bm y)\\
			{\bm{ \nu_x}^\top}\mathbb{A}_2(\bm z,\bm y)\nabla_{\bm z}\widetilde\Phi(k_1,\bm z,\bm y) \varphi_2(\bm y)\\
			{\bm{ \nu_x}^\top}\mathbb{A}_3(\bm z,\bm y)\nabla_{\bm z}\widetilde\Phi(k_1,\bm z,\bm y) \varphi_3(\bm y)
		\end{pmatrix}ds_{\bm y},\\
		\label{I5}
		I_5(\bm z)&&:=\int_{\Gamma^b}\begin{pmatrix}
			N_1(\bm z,\bm y)\varphi_3(\bm y)\\
			N_2(\bm z,\bm y)\varphi_1(\bm y)\\
			N_3(\bm z,\bm y)\varphi_2(\bm y)
		\end{pmatrix}ds_{\bm y},
		\en
		and
		\ben
		&&\mathbb{A}_1(\bm z,\bm y)=\diag\left\{\frac{\alpha_3( z_3)\alpha_2(y_2)}{\alpha_1( z_1)},\frac{\alpha_3( z_3)\alpha_1( y_1)}{\alpha_2( z_2)},\frac{\alpha_2( z_2)\alpha_1( y_1)}{\alpha_3( z_3)}\right\}\\
		&&\mathbb{A}_2(\bm z,\bm y)=\diag\left\{\frac{\alpha_3( z_3)\alpha_2( y_2)}{\alpha_1( z_1)},\frac{\alpha_1( z_1)\alpha_3( y_3)}{\alpha_2( z_2)},\frac{\alpha_1( z_1)\alpha_2( y_2)}{\alpha_3( z_3)}\right\}\\
		&&\mathbb{A}_3(\bm z,\bm y)=\diag\left\{\frac{\alpha_2( z_2)\alpha_3( y_3)}{\alpha_1( z_1)},\frac{\alpha_1( z_1)\alpha_3( y_3)}{\alpha_2( z_2)},\frac{\alpha_2( z_2)\alpha_1( y_1)}{\alpha_3( z_3)}\right\}\\
		&&N_1(\bm z,\bm y)=\frac{\partial \widetilde\Phi(k_1,\bm z,\bm y)}{\partial\widetilde z_1}{\nu_x^3}(\alpha_2( z_2)\alpha_3( y_3)-\alpha_3( z_3)\alpha_2( y_2)),\\
		&&N_2(\bm z,\bm y)=\frac{\partial \widetilde\Phi(k_1,\bm z,\bm y)}{\partial\widetilde z_2}{\nu_x^1}(\alpha_3( z_3)\alpha_1( y_1)-\alpha_1( z_1)\alpha_3( y_3)),\\
		&&N_3(\bm z,\bm y)=\frac{\partial \widetilde\Phi(k_1,\bm z,\bm y)}{\partial\widetilde z_3}{\nu_x^2}(\alpha_1( z_1)\alpha_2( y_2) -\alpha_2(z_2)\alpha_1( y_1)).
		\enn
		Noting that $\bm{\nu}=(0,0,-1)^\top$ on $\Gamma^b\backslash \Gamma_{loc}^1$, it follows from that
		\ben
		I_3(\bm z)=\int_{\Gamma_{loc}^1}\nabla_{\bm z}\Phi(k_1,\bm z,\bm y)(\bm{ \nu_x}^\top-\bm{\nu_y}^\top)\bm \varphi(\bm y)ds_{\bm y}.
		\enn
		whose kernel is smooth and thus, has no jump. Next, we study the term $I_4(\bm z)$ and only consider the first component, i.e.,
		\ben
		I_{41}(\bm z)=-\int_{\Gamma^b}{\bm{ \nu_x}^\top}\mathbb{A}_1(\bm z,\bm y)\nabla_{\bm z}\widetilde\Phi(k_1,\bm z,\bm y) \varphi_1(\bm y)ds_{\bm y},
		\enn
		which, by using the identity $\widetilde\nabla_{\bm y}\widetilde\Phi(k_1,\bm z,\bm y)=-\widetilde\nabla_{\bm z}\widetilde\Phi(k_1,\bm z,\bm y)$, can be expressed alternatively as
		\ben
		\label{I41}
		I_{41}(\bm z)=\int_{\Gamma^b}{\bm{ \nu_y}^\top}\mathbb{A}^{-1}(\bm y)\nabla_{\bm y}\widetilde\Phi(k_1,\bm z,\bm y) \varphi_1(\bm y)ds_{\bm y}+V(\varphi_1)(\bm z),
		\enn
		where
		\ben
		&&V(\varphi_1)(\bm z):=\int_{\Gamma^b}K(\bm z,\bm y)\varphi_1(\bm y)ds_{\bm y}\\
		=&&\int_{\Gamma^b}\begin{pmatrix}
			\alpha_3( z_3)\alpha_2(y_2)\nu_x^1-\alpha_3( y_3)\alpha_2(y_2)\nu_y^1\\
			\alpha_3( z_3)\alpha_1(y_1)\nu_x^2-\alpha_3( y_3)\alpha_1(y_1)\nu_y^2\\
			\alpha_2( z_2)\alpha_1(y_1)\nu_x^3-\alpha_1( y_1)\alpha_2(y_2)\nu_y^3				
		\end{pmatrix}^\top\widetilde\nabla_{\bm y}\widetilde\Phi(k_1,\bm z,\bm y)\varphi_1(\bm y)ds_{\bm y}.
		\enn
		It can be derived from (\ref{T2}) and the fact $|\alpha_i(z_i)-\alpha_i(y_i)|\le C|z_i-y_i|$ with $C>0$ being a constant that $|K(\bm z,\bm y)|\le M$ where $M>0$ is a constant, which further implies that $V(\varphi_1)$ has no jump. Therefore, applying (\ref{KJR}) we get
		\be
		\label{local2c-jump}
		\lim_{h\to 0^+,\bm z=\bm x-h\bm{\nu_x}}I_{41}(\bm z)=I_{41}(\bm x)-\frac{1}{2}\varphi_1(\bm x),\quad \bm x\in\Gamma^b\backslash \Gamma_{loc}^2.
		\en
		Finally, analogous to the proof of \cite[Theorem 6.15]{K14}, it can be deduced that $I_5(\bm z)$ also has no jump. Then we can conclude the jump relation (\ref{PMLJR2}) from (\ref{local2-jump}) and (\ref{local2c-jump}).
		The proof is complete.
	\end{proof}
	
	Now we can derive the BIE for solving the truncated PML problem (\ref{PMLP})-(\ref{PMLB}).
	
	\begin{theorem}
		Under the assumption (\ref{assu}), the fields within the layers admit the representations
		\be
		\label{solpml1}
		&&\widetilde {\bm E}^{\rm sca}_1(\bm x)=-\widetilde D_1(\widetilde {\bm M})(\bm x)-\frac{i}{\omega\epsilon_1}\widetilde S_1(\widetilde {\bm J})(\bm x),\quad \bm x\in\Omega_1^b,\\
		\label{solpml2}
		&&\widetilde {\bm H}_1^{\rm sca}(\bm x)=\frac{i}{\omega\mu_1}\widetilde S_1(\widetilde {\bm M})(\bm x)-\widetilde D_1(\widetilde {\bm J})(\bm x),\quad \bm x\in\Omega_1^b,\\
		\label{solpml3}
		&&\widetilde {\bm E}_2^{\rm sca}(\bm x)=\widetilde D_2(\widetilde {\bm M})(\bm x)+\frac{i}{\omega\epsilon_2}\widetilde S_2(\widetilde {\bm J})(\bm x),\quad \bm x\in\Omega_2^b,\\
		\label{solpml4}
		&&\widetilde {\bm H}_2^{\rm sca}(\bm x)=-\frac{i}{\omega\mu_2}\widetilde S_2(\widetilde {\bm M}_2)(\bm x)+\widetilde D_2(\widetilde {\bm J})(\bm x),\quad \bm x\in\Omega_2^b,
		\en
		in terms of the current density functions $\widetilde {\bm M}=\bm\nu\times \widetilde {\bm E}_2|_{\Gamma^b}$, $\widetilde {\bm J}=\bm\nu\times \widetilde {\bm H}_2|_{\Gamma^b}$, and the BIE
		\be
		\label{PMLBIE}
		&&(\mathbb E +\widetilde{\mathbb T})(\widetilde{\bm\phi})= \widetilde {\bm F} \quad\mbox{on}\quad\Gamma^b,
		\en
		results where the vector density functions $\widetilde{\bm\phi}$ and boundary data $\widetilde {\bm F}$ are given by
		\ben
		\widetilde{\bm\phi}=\begin{pmatrix}
			\widetilde {\bm M}\\
			\widetilde {\bm J}
		\end{pmatrix},\quad \widetilde {\bm F}=\begin{pmatrix}
			\epsilon_1\widetilde {\bm f}\\
			\mu_1\widetilde {\bm g}
		\end{pmatrix},
		\enn	
		and the matrix operator $\widetilde{\mathbb T}$ are defined by
		\be
		\label{MOPML}
		\widetilde{\mathbb T}=\begin{bmatrix}
			\epsilon_1 \widetilde K_1-\epsilon_2 \widetilde K_2 &\frac{i}{\omega}( \widetilde N_1- \widetilde N_2)\\
			\frac{i}{\omega}( \widetilde N_2- \widetilde N_1) & \mu_1 \widetilde K_1-\mu_2 \widetilde K_2
		\end{bmatrix}.
		\en
	\end{theorem}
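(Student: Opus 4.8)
The plan is to obtain the representations (\ref{solpml1})--(\ref{solpml4}) from a PML-stretched Stratton--Chu formula and then to read off the BIE (\ref{PMLBIE}) by taking tangential traces and applying Theorem~\ref{lemma1}.

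The first and principal step is the representation formula. Since the artificial fields satisfy $\widetilde{\bm E}_j(\bm x)=\mathbb B(\bm x)\bm E_j(\widetilde{\bm x})$ and the physical scattered fields, continued analytically, solve the classical system (\ref{OP}) on the complex-stretched box $\widetilde\Omega_j^b$, I would apply the classical Stratton--Chu representation over $\partial\widetilde\Omega_j^b$ and then change the surface variable back via $\bm y\mapsto\widetilde{\bm y}(\bm y)$. The resulting Jacobian and $\mathbb B$-factors are exactly those built into (\ref{PMLSLP})--(\ref{PMLDLP}), so the classical layer potentials become $\widetilde S_j,\widetilde D_j$ and the classical tangential traces become $\bm\nu\times\widetilde{\bm E}_j^{\rm sca}$ and $\bm\nu\times\widetilde{\bm H}_j^{\rm sca}$; together with the identities (\ref{SD1})--(\ref{SD2}) this expresses both $\widetilde{\bm E}_j^{\rm sca}$ and $\widetilde{\bm H}_j^{\rm sca}$, for $\bm x\in\Omega_j^b$, through $\widetilde S_j,\widetilde D_j$ of these traces over all of $\partial\Omega_j^b$. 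Alternatively one may avoid analytic continuation and derive the same identity directly from a stretched vector Green's identity based on the operators of Section~\ref{sec3.2}, with the near-diagonal kernel behaviour controlled by Lemmas~\ref{The3.1}--\ref{The3.2}.

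Next I would use assumption (\ref{assu}) to discard the contributions from $\Gamma^+$ and $\Gamma^-$, leaving only integrals over $\Gamma^b$, and rewrite the remaining region-$1$ traces through the transmission conditions (\ref{PMLB}). Here one uses that the auxiliary source fields $(\widetilde{\bm E}_j^{\rm src},\widetilde{\bm H}_j^{\rm src})$ obey their own complexified Stratton--Chu identities, so that, once the $\Gamma^\pm$ pieces are removed, their boundary contributions over $\Gamma^b$ drop out; everything is then expressed in the densities $\widetilde{\bm M}=\bm\nu\times\widetilde{\bm E}_2|_{\Gamma^b}$ and $\widetilde{\bm J}=\bm\nu\times\widetilde{\bm H}_2|_{\Gamma^b}$, yielding (\ref{solpml1})--(\ref{solpml4}). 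With these in hand the BIE follows by applying $\bm\nu_{\bm x}\times(\cdot)$ to (\ref{solpml1})--(\ref{solpml2}) and letting $\bm x\to\Gamma^b$ from $\Omega_1^b$, and likewise to (\ref{solpml3})--(\ref{solpml4}) from $\Omega_2^b$: the jump relations (\ref{PMLJR1})--(\ref{PMLJR4}) turn these into four boundary identities for the traces of $\widetilde{\bm E}_j^{\rm sca},\widetilde{\bm H}_j^{\rm sca}$ in terms of $\widetilde K_j,\widetilde N_j$ acting on $\widetilde{\bm M},\widetilde{\bm J}$. The two region-$2$ identities, combined with the defining relations for $\widetilde{\bm M},\widetilde{\bm J}$, collapse to $(\widetilde K_2-\tfrac{1}{2}\mathbb I)\widetilde{\bm M}+\tfrac{i}{\omega\epsilon_2}\widetilde N_2(\widetilde{\bm J})=\bm 0$ and an analogous relation for the $H$-field; substituting the region-$1$ identities into (\ref{PMLB}) and forming $\epsilon_1$ times the first condition minus $\epsilon_2$ times the region-$2$ relation (resp. $\mu_1$ and $\mu_2$ for the second) yields (\ref{PMLBIE}), the $\pm\tfrac{1}{2}\mathbb I$ terms assembling into $\mathbb E=\diag\{\tfrac{\epsilon_1+\epsilon_2}{2}\mathbb I,\tfrac{\mu_1+\mu_2}{2}\mathbb I\}$ and the remaining operators into $\widetilde{\mathbb T}$ of (\ref{MOPML}).

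The main obstacle is making the first (representation) step rigorous: justifying the analytic continuation of the physical fields into $\widetilde\Omega_j^b$ and the attendant deformation of the Stratton--Chu surface integral, or equivalently setting up the stretched vector Green's identity with the anisotropic coefficient $\mathbb A$ while using only the scalar PML-transformed fundamental solution $\widetilde\Phi$ rather than a dyadic Green's function. The careful accounting of the source-field boundary contributions under (\ref{assu}) is the second delicate point.
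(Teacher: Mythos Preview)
Your overall strategy---Stratton--Chu type representation, tangential traces, the jump relations of Theorem~\ref{lemma1}, then the transmission conditions (\ref{PMLB})---is exactly the paper's skeleton. The substantive difference is in how the representation step is executed, and the paper's device resolves precisely the obstacle you flag at the end. Rather than either analytically continuing the fields to a complex box $\widetilde\Omega_j^b$ and deforming the classical Stratton--Chu surface, or assembling a stretched vector Green's identity from the scalar kernel alone, the paper introduces the stretched \emph{dyadic} Green's function
\[
\widetilde{\mathbb G}(k,\bm x,\bm y)=\mathbb B(\bm y)\Bigl(\mathbb I+\tfrac{1}{k^2}\widetilde\nabla_{\bm x}\widetilde\nabla_{\bm x}\Bigr)\widetilde\Phi(k,\bm x,\bm y),
\]
which (citing~\cite{CZ17}) satisfies $\curl_{\bm y}\bigl[\mathbb A(\bm y)\curl_{\bm y}\widetilde{\mathbb G}\bigr]-k^2\mathbb A^{-1}(\bm y)\widetilde{\mathbb G}=\delta(\bm x-\bm y)\mathbb B^{-1}(\bm y)$. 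This is the fundamental solution for the anisotropic operator in (\ref{PMLP}), so a single integration by parts in the \emph{real} domain $\Omega_1^b$ already gives $\widetilde{\bm E}_1^{\rm sca}$ as boundary integrals over $\partial\Omega_1^b$; assumption (\ref{assu}) kills the $\Gamma^+$ piece, and the remaining integrals over $\Gamma^b$ are exactly $\widetilde D_1,\widetilde S_1$. The companion formula for $\widetilde{\bm H}_1^{\rm sca}$ then follows from (\ref{PMLP}) together with (\ref{SD1})--(\ref{SD2}). No analytic continuation, no contour deformation, no separate near-diagonal analysis is needed: the dyadic $\widetilde{\mathbb G}$ is built from the scalar $\widetilde\Phi$ but carries the correct tensor structure for the anisotropic problem.

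Your detour through ``the auxiliary source fields obey their own complexified Stratton--Chu identities so their $\Gamma^b$ contributions drop out'' does not appear in the paper: the representation is written directly for the scattered field and the boundary traces are identified with $\widetilde{\bm M},\widetilde{\bm J}$ without any source-field cancellation step (the paper is admittedly terse here). Your linear-combination bookkeeping for assembling $\mathbb E$ and $\widetilde{\mathbb T}$ is correct and equivalent to what the paper does in a single sentence.
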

	\begin{proof}
		Define the stretched dyadic Green's function by
		\ben
		\widetilde{\mathbb G}(k,\bm x,\bm y):=\mathbb B(\bm y)\mathbb G(k_1,\widetilde{\bm x},\widetilde{\bm y})=\mathbb B(\bm y)\left(\mathbb I+\frac{\widetilde\nabla_{\bm x}\widetilde\nabla_{\bm x}}{k^2}\right)\widetilde\Phi(k,x,y),
		\enn
		It can be seen from~\cite{CZ17} that $\widetilde{\mathbb G}$ satisfy the equation
		\ben
		\label{GS}
		\curl_{\bm y}\left[\mathbb A(\bm y)\curl_{\bm y}\widetilde{\mathbb G}(k_j,\bm x,\bm y)\right]-k^2\mathbb A^{-1}(y)\widetilde{\mathbb G}(k_j,\bm x,\bm y)=\delta(\bm x-\bm y)\mathbb B^{-1}(\bm y)
		\enn
		for $\bm y\in\Omega_j^b$. Without loss of generality, we consider only (\ref{solpml1}) and (\ref{solpml2}). Then we have for $\bm x\in \Omega_1^b$,
		\ben
		&&\widetilde {\bm E}^{\rm sca}_1(\bm x)=\mathbb{B}(\bm x)\int_{\Omega_1^b} \delta(\bm x-\bm y)\mathbb{B}^{-1}(\bm y)\widetilde {\bm E}^{\rm sca}_1(\bm y)ds_{\bm y}\\
		=&&-\mathbb{B}(\bm x)\int_{\partial\Omega_1^b} \left[\mathbb A(\bm y)\curl_{\bm y}\widetilde{\mathbb G}(k_1,\bm x,\bm y)\right]^\top\bm{\nu_y}\times\widetilde {\bm E}^{\rm sca}_1(\bm y)ds_{\bm y}\\
		&&-\mathbb{B}(\bm x)\int_{\partial\Omega_1^b}\widetilde{\mathbb G}^\top(k_1,\bm x,\bm y)\nu_{\bm y}\times\left(\mathbb A(\bm y)\curl_{\bm y}\widetilde {\bm E}^{\rm sca}_1(\bm y)\right)ds_{\bm y}\\
		=&&-\mathbb{B}(\bm x)\int_{\partial\Omega_1^b} \left[\widetilde \nabla\times\mathbb G(k_1,\widetilde{\bm x},\widetilde{\bm y})\right]^\top\mathbb B(\bm y)\bm{\nu_y}\times\widetilde {\bm E}^{\rm sca}_1(\bm y)ds_{\bm y}\\
		&&-\mathbb{B}(\bm x)\widetilde\nabla_{\bm x}\times\widetilde\nabla_{\bm x}\int_{\partial\Omega_1^b}\widetilde\Phi(k_1,\bm x,\bm y)\mathbb B(\bm y)\nu_{\bm y}\times\left(\mathbb A(\bm y)\curl_{\bm y}\widetilde {\bm E}^{\rm sca}_1(\bm y)\right)ds_{\bm y}\\
		=&&-\widetilde D_1(\widetilde {\bm M})(\bm x)-\frac{i}{\omega\epsilon_1}\widetilde S_1(\widetilde {\bm J})(\bm x).
		\enn	
		According to (\ref{PMLP}), (\ref{SD1}) and (\ref{SD2}), we get
		\ben
		\widetilde {\bm H}^{\rm sca}_1(\bm x)=&&\frac{1}{i\omega\mu_1}\mathbb A(\bm x)\curl_x\widetilde {\bm E}^{\rm sca}_1(\bm x)=\frac{i}{\omega\mu_1}\widetilde S_1(\widetilde {\bm M})(\bm x)-\widetilde D_1(\widetilde {\bm J})(\bm x),\quad \bm x\in\Omega_1^b.
		\enn
		Letting $\bm x\in\Gamma^b$, utilizing the jump relations (\ref{PMLJR1})-(\ref{PMLJR4}) and applying the continuity conditions (\ref{PMLB}), the BIE (\ref{PMLBIE}) results. The proof is complete.
	\end{proof}

	\begin{remark}
		\label{remark-halfpml}
		Considering the half-space problems mentioned in Remark~\ref{remark-half0}, the PML stretching of the boundary conditions of PEC (\ref{PECC0}) and PMC (\ref{PMCC0}) reads
		\ben
		&&{\rm PEC\;\; case:}\qquad\bm \nu\times \widetilde {\bm E}=0\quad\mbox{on}\quad\Gamma_*^b:=\partial U_0\cup\Gamma^b,\\
		&&{\rm PMC\;\; case:}\qquad\bm \nu\times \widetilde {\bm H}=0\quad\mbox{on}\quad\Gamma_*^b.
		\enn
		Then the following BIEs for solving the corresponding PML truncated problems
		\be
		\label{PMLPECBIE}
		&&{\rm PEC\;\; case:}\quad(\widetilde K_{01}+\frac{\mathbb I}{2})(\widetilde{\bm J})=-\frac{i}{\omega\mu_1}\widetilde N_{01}(\bm\nu\times\widetilde {\bm H}^{\rm src})\quad\mbox{on}\quad\Gamma_*^b,\\
		\label{PMLPMCBIE}
		&&{\rm PMC\;\; case:}\quad(\widetilde K_{01}+\frac{\mathbb I}{2})(\widetilde{\bm M})=\frac{i}{\omega\epsilon_1}\widetilde N_{01}(\bm\nu\times \widetilde {\bm E}^{\rm src})\quad\mbox{on}\quad\Gamma_*^b,
		\en
		can be obtained where the BIOs $\widetilde K_{01}$ and $\widetilde N_{01}$ are defined by (\ref{PMLSIO}) and (\ref{PMLDIO}), respectively with $\Gamma^b$ replaced by $\Gamma_*^b$.
	\end{remark}

\begin{remark}
We have to point out here that the wellposedness of the derived BIEs (\ref{PMLBIE}) and (\ref{PMLPECBIE})-(\ref{PMLPMCBIE}), which must relate to the analysis of the truncated PML problem (\ref{PMLP})-(\ref{assu}), still remains open. Unlike~\cite{CZ17}, use of the PML boundary condition (\ref{assu}) can significantly simplify the derivation and reduce the numerical evaluation of the BIE for solving the layered-medium scattering problems, however, will bring additional challenges to show the inf-sup condition. This will be left for future works.
\end{remark}
	
	\subsection{Regularization of the hyper-singular operators}
	\label{sec3.4}
	
	Note that the integral operators $\widetilde N_1$, $\widetilde N_2,$ and $\widetilde N_{01}$ in (\ref{PMLBIE}), (\ref{PMLPECBIE}) and (\ref{PMLPMCBIE}) are all hyper-singular. This section will develop efficient regularized formulations for these hyper-singular operators and the resulted new expressions only consist of weakly singular integral operators, surface vector curl operator and surface divergence operator.
	\begin{theorem}
		\label{REF}
		Let ${\bm\varphi}\in C^1(\Gamma^b)^3$ be a tangential field satisfying ${\bm\varphi}=0$ on $\partial\Gamma^b$. The hyper-singular operators $\widetilde N_{j}, j=1,2$ can be expressed alternatively as
		\ben
		\widetilde N_{j}({\bm\varphi})(\bm x)=&&k_j^2\bm\nu_x\times \mathbb{B}(\bm x)\int_{\Gamma_b}\widetilde\Phi(k_j,\bm x,\bm y) \mathbb{B}(\bm y){\bm\varphi}(\bm y)ds_{\bm y}\\
		&&+\overrightarrow{\curl}_\Gamma\int_{\Gamma_b}\widetilde\Phi(k_j,\bm x,\bm y)\divv_{\Gamma}{\bm\varphi}(\bm y)ds_{\bm y},
		\enn	
		where $\overrightarrow{\curl}_\Gamma$ and $\divv_\Gamma$ denote the surface vector curl operator and surface divergence operator, respectively, which are defined for a scalar function $u$ and a vector function $\bm v$ by the following equalities
		\ben
		\overrightarrow{\curl}_\Gamma u={\bm\nu}\times \nabla_\Gamma u,\quad \divv_\Gamma\bm v=\divv\bm v-{\bm\nu}\cdot\partial_{\bm\nu} \bm v,
		\enn	
		with the surface gradient $\nabla_\Gamma u=\nabla u-{\bm\nu}\partial_{\bm\nu} u$.	Analogous result holds for $\widetilde N_{01}$.
	\end{theorem}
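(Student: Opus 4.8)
The plan is to follow the classical Maxwell hyper-singular regularization (as in Nédélec or Colton–Kress) but carried through the PML stretching. The starting point is the identity used already in the proof of Theorem~\ref{lemma1}, namely $\widetilde\nabla\times\widetilde\nabla\times = -\widetilde\Delta + \widetilde\nabla\,\widetilde\nabla\cdot$, applied to the single-layer–type potential
\[
\widetilde{\bm u}(\bm x)=\int_{\Gamma^b}\widetilde\Phi(k_j,\bm x,\bm y)\mathbb B(\bm y){\bm\varphi}(\bm y)\,ds_{\bm y}.
\]
Since $\widetilde\Phi(k_j,\cdot,\bm y)$ is the fundamental solution of the stretched Helmholtz operator, one has $\widetilde\Delta_{\bm x}\widetilde{\bm u} = -k_j^2\widetilde{\bm u}$ away from $\Gamma^b$, so that
\[
\widetilde\nabla_{\bm x}\times\widetilde\nabla_{\bm x}\times\widetilde{\bm u}
= k_j^2\,\widetilde{\bm u} + \widetilde\nabla_{\bm x}\bigl(\widetilde\nabla_{\bm x}\cdot\widetilde{\bm u}\bigr).
\]
Multiplying on the left by $\bm\nu_{\bm x}\times\mathbb B(\bm x)$ reproduces the first term in the claimed formula (the "$k_j^2$" term), so everything reduces to identifying $\bm\nu_{\bm x}\times\mathbb B(\bm x)\,\widetilde\nabla_{\bm x}\bigl(\widetilde\nabla_{\bm x}\cdot\widetilde{\bm u}\bigr)$ with $\overrightarrow{\curl}_\Gamma\int_{\Gamma^b}\widetilde\Phi(k_j,\bm x,\bm y)\,\divv_\Gamma{\bm\varphi}(\bm y)\,ds_{\bm y}$.

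The next step is to move the divergence from $\bm x$ to $\bm y$. Using $\widetilde\nabla_{\bm x}\widetilde\Phi = -\widetilde\nabla_{\bm y}\widetilde\Phi$ and the relation between $\widetilde\nabla$ and $\nabla$ (so that $\widetilde\nabla_{\bm x}\cdot\bigl(\mathbb B(\bm y)\cdot\bigr)$ acting under the integral becomes, after the change of variable, an ordinary divergence in the stretched coordinates), one integrates by parts over the surface $\Gamma^b$. Here the surface integration-by-parts identity $\int_{\Gamma^b}\psi\,\divv_\Gamma{\bm\varphi}\,ds + \int_{\Gamma^b}\nabla_\Gamma\psi\cdot{\bm\varphi}\,ds = 0$ is used, which is valid with no boundary term precisely because ${\bm\varphi}=0$ on $\partial\Gamma^b$ — this is where the hypothesis on ${\bm\varphi}$ enters. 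This converts $\widetilde\nabla_{\bm x}\cdot\widetilde{\bm u}$ into $\int_{\Gamma^b}\widetilde\Phi(k_j,\bm x,\bm y)\,\divv_\Gamma{\bm\varphi}(\bm y)\,ds_{\bm y}$ (the surface divergence being the natural one induced by the stretched metric on $\Gamma^b$, which, since the stretching is uniaxial and $\Gamma^b$ is locally a graph, coincides with the stated $\divv_\Gamma{\bm\varphi}=\divv{\bm\varphi}-{\bm\nu}\cdot\partial_{\bm\nu}{\bm\varphi}$). Then $\widetilde\nabla_{\bm x}$ falls only on this scalar single-layer potential $w(\bm x):=\int_{\Gamma^b}\widetilde\Phi(k_j,\bm x,\bm y)\,\divv_\Gamma{\bm\varphi}(\bm y)\,ds_{\bm y}$, giving $\widetilde\nabla_{\bm x} w$. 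Finally one checks $\bm\nu_{\bm x}\times\mathbb B(\bm x)\,\widetilde\nabla_{\bm x} w = \bm\nu_{\bm x}\times\nabla_{\bm x} w = \overrightarrow{\curl}_\Gamma w$, using $\mathbb B\widetilde\nabla = \nabla$ and the fact that $\bm\nu_{\bm x}\times\nabla_{\bm x} w = \bm\nu_{\bm x}\times\nabla_\Gamma w$ (the normal part of $\nabla w$ is annihilated by $\bm\nu_{\bm x}\times$). This yields exactly the asserted representation.

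The main obstacle I anticipate is bookkeeping rather than conceptual: one must be careful that the surface differential operators appearing after the stretching are genuinely the Euclidean $\divv_\Gamma$ and $\overrightarrow{\curl}_\Gamma$ on $\Gamma^b$ and not metric-distorted versions. Because the uniaxial stretch $\widetilde{\bm x}(\bm x)$ is a diagonal, variable-coefficient change of variables and $\Gamma^b$ is only piecewise of the special form $\{x_3=\text{const}\}$ (it equals $\Pi$ outside $\Gamma_{\rm loc}$ but is a genuine curved defect inside), the factors $\mathbb B(\bm x)$, $\mathbb B(\bm y)$, $\mathbb A$ and the Jacobian $J$ must be tracked through the integration by parts. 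The cleanest route is to first establish the identity on the flat part $\Gamma^b\setminus\Gamma_{\rm loc}$, where $\mathbb B=\mathbb I$ and everything collapses to the classical computation, and then verify it on $\Gamma_{\rm loc}$, where the stretching is likewise trivial if $\Gamma_{\rm loc}$ lies in the region $|x_l|<a_l$ (so again $\sigma_l\equiv 0$ there and $\mathbb B=\mathbb I$); in that case the stretched potential is literally the ordinary one and Theorem~\ref{REF} reduces to the standard Maxwell regularization identity. The analogous statement for $\widetilde N_{01}$ is obtained verbatim with $\Gamma^b$ replaced by $\Gamma_*^b$, the hypothesis ${\bm\varphi}=0$ on $\partial\Gamma_*^b$ again ensuring the surface integration by parts produces no boundary contribution.
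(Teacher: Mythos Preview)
Your approach is essentially identical to the paper's: both apply the identity $\widetilde\nabla\times\widetilde\nabla\times=-\widetilde\Delta+\widetilde\nabla\,\widetilde\nabla\cdot$, convert $\widetilde\nabla_{\bm x}\widetilde\Phi$ to $-\widetilde\nabla_{\bm y}\widetilde\Phi$, and then integrate by parts on $\Gamma^b$ using the hypothesis $\bm\varphi=0$ on $\partial\Gamma^b$. The bookkeeping worry you raise and the case-split you propose are unnecessary: because $\mathbb B$ is diagonal, one has $(\widetilde\nabla_{\bm y}\widetilde\Phi)\cdot\mathbb B(\bm y)\bm\varphi=(\mathbb B^{-1}(\bm y)\nabla_{\bm y}\widetilde\Phi)\cdot\mathbb B(\bm y)\bm\varphi=\nabla_{\bm y}\widetilde\Phi\cdot\bm\varphi$ at \emph{every} point of $\Gamma^b$, so the ordinary Euclidean $\divv_\Gamma$ appears directly after the surface integration by parts, and likewise $\mathbb B(\bm x)\widetilde\nabla_{\bm x}=\nabla_{\bm x}$ delivers the Euclidean $\overrightarrow{\curl}_\Gamma$ with no metric distortion---this is exactly how the paper's short proof proceeds (note also that $\mathbb B\neq\mathbb I$ on the flat portion of $\Gamma^b$ lying inside the PML, contrary to what your last paragraph asserts).
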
	
	
	\begin{proof}
		The identity $\widetilde\nabla\times\widetilde\nabla\times=-\widetilde\Delta +\widetilde\nabla\widetilde\nabla\cdot$ implies that
		\ben
		\widetilde N_{j}(\bm\varphi)(\bm x)
		=&&\bm{\nu_x}\times\mathbb B(\bm x)\left(-\widetilde\Delta_{\bm x}+\widetilde\nabla_{\bm x}\widetilde\nabla_{\bm x}\cdot\right)\int_{\Gamma^b}\widetilde\Phi(k_j,\bm x,\bm y)\mathbb B(y)\bm\varphi(\bm y)ds_{\bm y}\\
		=&&k_j^2\bm{\nu_x}\times\mathbb B(\bm x)\int_{\Gamma^b}\widetilde\Phi(k_j,\bm x,\bm y)\mathbb B(\bm y)\bm\varphi(\bm y)ds_{\bm y}\\
		&&-\bm{\nu_x}\times\mathbb B(\bm x)\widetilde\nabla_{\bm x}\int_{\Gamma^b}\nabla_{\bm y}\widetilde\Phi(k_j,\bm x,\bm y)\cdot \bm\varphi(\bm y)ds_{\bm y}\\
		=&&k_j^2\bm{\nu_x}\times\mathbb B(\bm x)\int_{\Gamma^b}\widetilde\Phi(k_j,\bm x,\bm y)\mathbb B(y)\bm\varphi(\bm y)ds_{\bm y}\\
		&&+\overrightarrow{\curl}_\Gamma\int_{\Gamma^b}\widetilde\Phi(k_j,\bm x,\bm y)\divv_{\Gamma}\bm\varphi(\bm y)ds_{\bm y}
		\enn
		in view of assumption on $\bm\varphi$ and the integration-by-part formula
		\ben
		\int_\Gamma \nabla_\Gamma u \cdot\bm v ds=- \int_\Gamma u\divv_\Gamma\bm v ds
		\enn
		for a scaler field $u$ and a vector field $\bm v$. This completes the proof.
	\end{proof}

	\begin{remark}
		Considering the BIEs (\ref{PMLPECBIE}) and (\ref{PMLPMCBIE}), the hyper-singular terms we need to deal with are $\widetilde N_{01}(\nu\times \widetilde {\bm E}^{\rm src})$ and $\widetilde N_{01}(\nu\times \widetilde {\bm H}^{\rm src})$.
		\begin{itemize}
			\item For the case of a plane incident wave, it is known that $\nu\times{\bm E}^{\rm src}=0$ and $\nu\times{\bm H}^{\rm src}=0$ on $\Pi$ and thus, $\nu\times \widetilde {\bm E}^{\rm src}=0$ and $\nu\times \widetilde {\bm H}^{\rm src}=0$ on $\Gamma^b\cap\Pi$. Hence, the assumption in Lemma~\ref{REF} is valid and therefore, the regularized formulations for $\widetilde N_{01}(\nu\times \widetilde {\bm E}^{\rm src})$ and $\widetilde N_{01}(\nu\times \widetilde {\bm H}^{\rm src})$ holds exactly.
			\item For the case of a point source, note that $({\bm E}^{\rm src},{\bm H}^{\rm src})=(-{\bm E}^{\rm inc},-{\bm H}^{\rm inc})$ is an outgoing wave which decays exponentially in the PML region as the thickness of the PML increases. Thus, the regularized formulations shown in Lemma~\ref{REF} can provide highly accurate approximations to $\widetilde N_{01}(\nu\times \widetilde {\bm E}^{\rm src})$ and $\widetilde N_{01}(\nu\times \widetilde {\bm H}^{\rm src})$.
		\end{itemize}
		For the terms $(\widetilde N_1-\widetilde N_2)(\widetilde{\bm g})$ and $(\widetilde N_2-\widetilde N_1)(\widetilde{\bm f})$ in (\ref{PMLBIE}), the regularized formulations shown in Lemma~\ref{REF} can be utilized in numerical implementation. But alternatively, noting that the kernel of $\widetilde N_1-\widetilde N_2$ is weakly-singular, the terms $(\widetilde N_1-\widetilde N_2)(\widetilde{\bm g})$ and $(\widetilde N_2-\widetilde N_1)(\widetilde{\bm f})$ can be evaluated directly without use of the regularization.
	\end{remark}

	\begin{figure}[htb]
		\centering
		\begin{tabular}{cc}
			\includegraphics[scale=0.13]{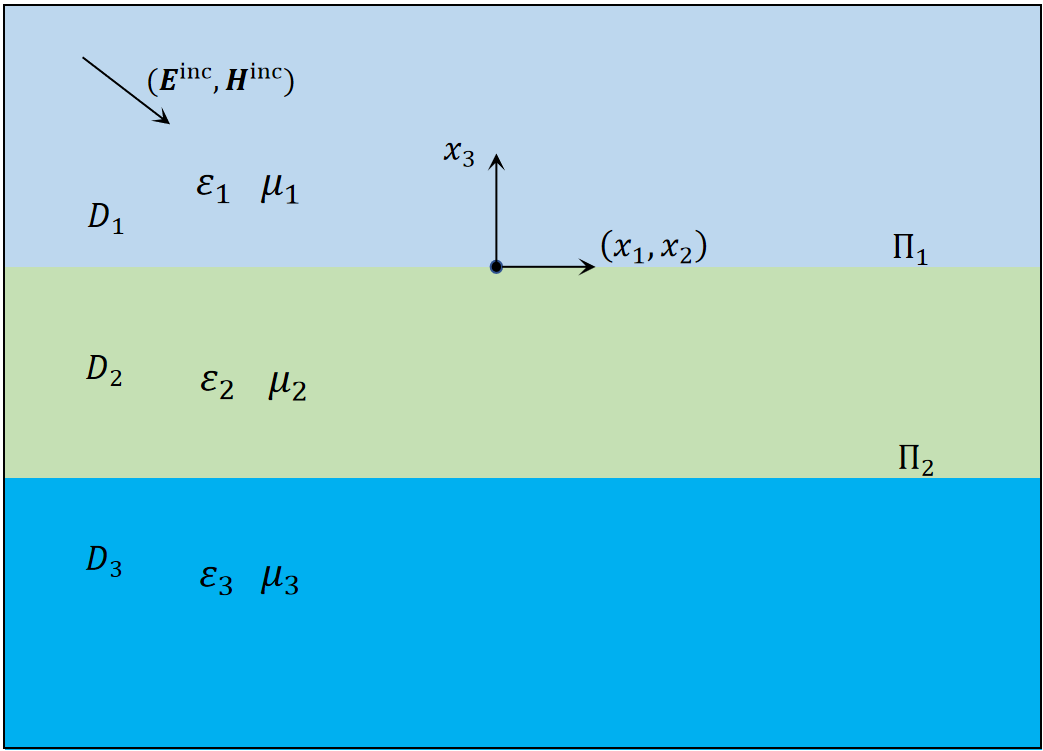} &
			\includegraphics[scale=0.13]{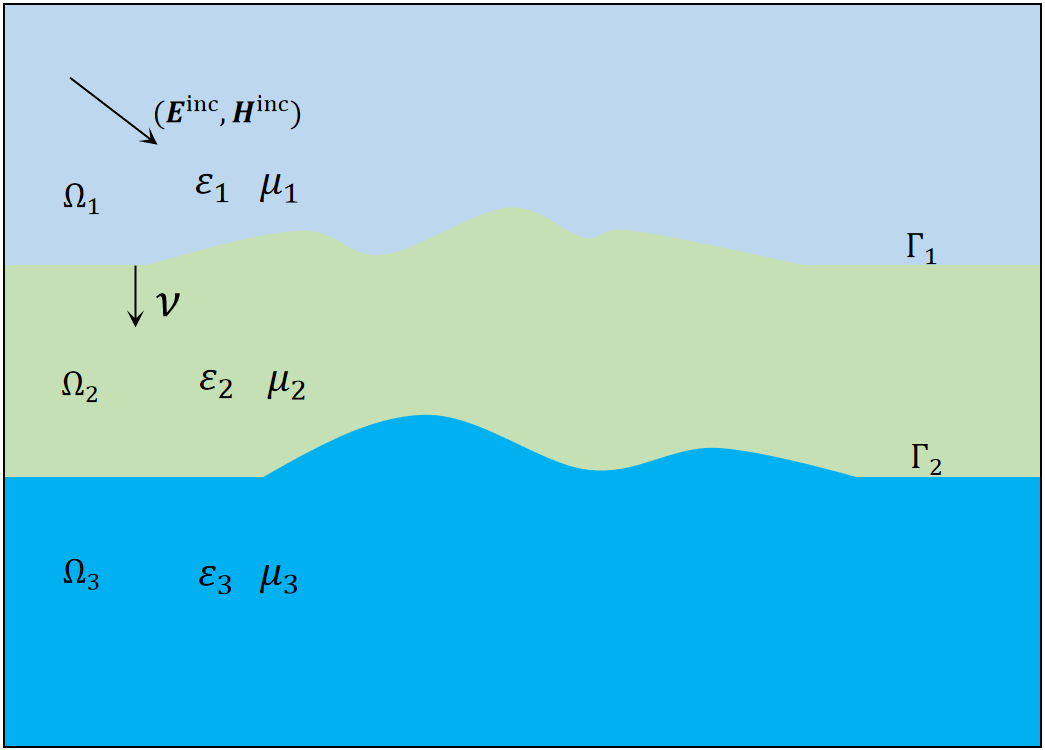} \\
			(a)&(b)
		\end{tabular}
		\caption{Geometry description of a planar layered medium (a) and a locally perturbed multi-layered medium (b) for the case $N=3$.}
		\label{Multi Geometry description}
	\end{figure}
	
	\section{Multi-layered medium problems}
	\label{sec4}
	
	In this section, we discuss the application of the PML-BIE approach to the problem of Maxwell's equations in a multi-layered medium which contains $N$ ($N>2$) layers $\Omega_j$, $j=1,...,N$, with the interfaces $\Gamma_j$, $j=1,...,N-1$, see Fig.~\ref{Multi Geometry description}(b). Each layer $\Omega_j$ is occupied by a homogeneous material characterized by the dielectric permittivity $\epsilon_j$ and the magnetic permeability $\mu_j$. For reference, the planar layered medium case is shown in Fig.~\ref{Multi Geometry description}(a). Then the problem of Maxwell's equations reads: find the scattered fields $({\bm E}^{\rm sca}_j,{\bm H}^{\rm sca}_j)$, $j=1,\cdots,N$ satisfying
	\be
	\label{MultiOP}
	\begin{cases}
		\curl {\bm E}_j^{\rm sca}-i\omega\mu_j{\bm H}_j^{\rm sca}=\bm 0 & \mbox{in}\quad\Omega_j,\cr
		\curl {\bm H}_j^{\rm sca}+i\omega\epsilon_j{\bm E}_j^{\rm sca}=\bm 0 & \mbox{in}\quad\Omega_j,
	\end{cases}\quad j=1,\cdots,N,
	\en
	and the transmission conditions
	\be
	\label{MultiOP2}
	\begin{cases}
		{\bm\nu}\times {\bm E}_{j+1}^{\rm sca}-{\bm \nu}\times {\bm E}_{j}^{\rm sca}=\bm f_j &\mbox{on}\quad\Gamma_j,\cr
		{\bm \nu}\times {\bm H}_{j+1}^{\rm sca}-{\bm \nu}\times {\bm H}_{j}^{\rm sca}=\bm g_j &\mbox{on}\quad\Gamma_j,
	\end{cases}\quad j=1,\cdots,N-1,
	\en
	with the tangential vector fields $\bm f_j$ and $\bm g_j$ being given by
	\ben
	\bm f_j=\bm \nu\times {\bm E}_j^{\rm src}|_{\Gamma_j}-\bm{\nu}\times {\bm E}_{j+1}^{\rm src}|_{\Gamma_j},\quad \bm g_j=\bm{\nu}\times \bm H_j^{\rm src}|_{\Gamma_j}-\bm{\nu}\times \bm H_{j+1}^{\rm src}|_{\Gamma_j}.
	\enn
	Here, analogous to (\ref{auxpl}) and (\ref{auxpo}), ($\bm E_j^{\rm src}$,$\bm H_j^{\rm src}$) denote the auxiliary source, where $(\bm E_j^{\rm re},\bm H_j^{\rm re})$ represents the reflected field resulting from the scattering of the plane wave $(\bm E^{\rm inc}, \bm H^{\rm inc})$ by the planar layered medium shown in Fig.~\ref{Multi Geometry description}(a).
	
	Relying on the complex coordinate stretching in Section~\ref{sec3.1}, we can obtain the PML truncated problem as follows:
	\be
	\label{P}
	\begin{cases}
		\curl {\widetilde {\bm E}}_j^{\rm sca}-i\omega\mu_j\mathbb{A}^{-1}{\widetilde {\bm H}}_j^{\rm sca}=0 & \mbox{in}\quad\Omega_j^b,\quad j=1,...,N,\cr
		\curl {\widetilde {\bm H}}_j^{\rm sca}+i\omega\epsilon_j\mathbb {A}^{-1}{\widetilde {\bm E}}_j^{\rm sca}=0 & \mbox{in}\quad\Omega_j^b,\quad j=1,...,N,\cr
		\bm{\nu}\times {\widetilde {\bm E}}_{j+1}^{\rm sca}-\bm{\nu}\times {\widetilde {\bm E}}_{j}^{\rm sca}=\widetilde{\bm f}_j &\mbox{on}\quad\Gamma_j^b,\quad j=1,...,N-1,\cr
		\bm{\nu}\times {\widetilde {\bm H}}_{j+1}^{\rm sca}-\bm{\nu}\times {\widetilde {\bm H}}_{j}^{\rm sca}=\widetilde{\bm g}_j &\mbox{on}\quad\Gamma_j^b,\quad j=1,...,N-1,
	\end{cases}
	\en	
	where $\Omega_j^b$ and $\Gamma_j^b$ denote the truncated domains and interfaces, respectively, and the tangential vector fields $\widetilde{\bm f}_j$ and $\widetilde{\bm g}_j$ are given by
	\ben
	\widetilde{\bm f}_j=\bm {\nu}\times \widetilde{\bm E}_j^{\rm src}|_{\Gamma_j}-\bm{\nu}\times \widetilde{\bm E}_{j+1}^{\rm src}|_{\Gamma_j},\quad \widetilde{\bm g}_j=\bm{\nu}\times \widetilde{\bm H}_j^{\rm src}|_{\Gamma_j}-\bm{\nu}\times \widetilde{\bm H}_{j+1}^{\rm src}|_{\Gamma_j}.
	\enn
	Introducing the following layer potentials and BIOs
	\ben
	&&\widetilde S_j^i(\bm\varphi)(\bm x)=\mathbb{B}(\bm x)\widetilde \nabla_{\bm x}\times\widetilde \nabla_{\bm x}\times\int_{\Gamma_i^b} \widetilde\Phi(k_j,\bm x,\bm y)\mathbb{B}(\bm y)\bm\varphi(\bm y)ds_{\bm y},\quad \bm x\in\Omega_j^b,\\
	&&\widetilde D_j^i(\bm\varphi)(\bm x)=\mathbb{B}(\bm x)\widetilde \nabla_{\bm x}\times\int_{\Gamma_i^b} \widetilde\Phi(k_j,\bm x,\bm y)\mathbb{B}(\bm y)\bm\varphi(\bm y)ds_{\bm y},\quad \bm x\in\Omega_j^b,\\
	&&\widetilde N_j^{l,i}(\bm\varphi)(\bm x)=\bm {\nu_x}\times \mathbb{B}(\bm x)\widetilde \nabla_{\bm x}\times\widetilde \nabla_{\bm x}\times\int_{\Gamma_i^b} \widetilde\Phi(k_j,\bm x,\bm y)\mathbb{B}(\bm y)\bm\varphi(\bm y)ds_{\bm y},\quad \bm x\in\Gamma_l^b,\\
	&&\widetilde K_j^{l,i}(\bm\varphi)(\bm x)=\bm{\nu_x}\times \mathbb{B}(\bm x)\widetilde \nabla_{\bm x}\times\int_{\Gamma_i^b} \widetilde\Phi(k_j,\bm x,\bm y)\mathbb{B}(\bm y)\bm\varphi(\bm y)ds_{\bm y},\quad \bm x\in\Gamma_l^b,
	\enn	
	and assuming the vanishing of the electromagnetic fields $({\widetilde {\bm E}}_{j}^{\rm sca}, {\widetilde {\bm H}}_{j}^{\rm sca})$ on the outer boundary of the PML region, the fields within the layers can be expressed in the form
	\ben
	\nonumber
	&&\widetilde {\bm E}^{\rm sca}_1=-\widetilde D_1^1(\widetilde {\bm M}_1)-\frac{i}{\omega\epsilon_1}\widetilde S_1^1(\widetilde {\bm J}_1),\quad \widetilde {\bm H}_1^{\rm sca}=\frac{i}{\omega\mu_1}\widetilde S_1^1(\widetilde {\bm M}_1)-\widetilde D_1^1(\widetilde {\bm J}_1)\quad \mbox{in}\quad\Omega_1^b,\\
	\nonumber
	&&\widetilde {\bm E}_j^{\rm sca}=-\widetilde D_j^j(\widetilde {\bm M}_j)-\frac{i}{\omega\epsilon_j}\widetilde S_j^j(\widetilde {\bm J}_j)+\widetilde D_j^{j-1}(\widetilde {\bm M}_{j-1})+\frac{i}{\omega\epsilon_j}\widetilde S_j^{j-1}(\widetilde {\bm J}_{j-1})\quad \mbox{in}\quad\Omega_j^b,\\
	&&\widetilde {\bm H}_j^{\rm sca}=\frac{i}{\omega\mu_j}\widetilde S_j^j(\widetilde {\bm M}_j)-\widetilde D_j^j(\widetilde {\bm J}_j)-\frac{i}{\omega\mu_j}\widetilde S_j^{j-1}(\widetilde {\bm M}_{j-1})+\widetilde D_j^{j-1}(\widetilde {\bm J}_{j-1})\quad \mbox{in}\quad\Omega_j^b,\\
	\nonumber
	&&\widetilde {\bm E}_N^{\rm sca}=\widetilde D_N^{N-1}(\widetilde {\bm M}_{N-1})+\frac{i}{\omega\epsilon_N}\widetilde S_N^{N-1}(\widetilde {\bm J}_{N-1})\quad \mbox{in}\quad\Omega_N^b,\\
	\nonumber
	&& \widetilde {\bm H}_N^{\rm sca}=-\frac{i}{\omega\mu_N}\widetilde S_N^{N-1}(\widetilde {\bm M}_{N-1})+\widetilde D_N^{N-1}(\widetilde {\bm J}_{N-1})\quad \mbox{in}\quad\Omega_N^b.
	\enn
	for $j=2,\cdots,N-1$ in terms of the currents density functions
	\ben
	\widetilde {\bm M}_j=\bm\nu\times \widetilde {\bm E}^{\rm sca}_{j+1}|_{\Gamma_j}, \quad \widetilde {\bm J}_j=\bm\nu\times \widetilde {\bm H}^{\rm sca}_{j+1}|_{\Gamma_j},\quad j=1,...,N-1.
	\enn
	Utilizing the jump relations shown in Lemma~\ref{lemma1}, we are led to the following BIEs
	\be
	\label{MulPMLBIE1}
	&&\mathbb E_1 (\widetilde{\bm\phi}_1)+\widetilde{\mathbb T}_1(\widetilde{\bm\phi}_1)+\widetilde{\mathbb R}_1(\widetilde{\bm\phi}_2)=\widetilde {\bm F}_1 \quad\mbox{on}\quad\Gamma_1^b,\\
	\label{MulPMLBIEj}
	&&\mathbb E_j (\widetilde{\bm\phi}_j)+\widetilde{\mathbb L}_j(\widetilde{\bm\phi}_{j-1})+\widetilde{\mathbb T}_j(\widetilde{\bm\phi}_j)+\widetilde{\mathbb R}_j(\widetilde{\bm\phi}_{j+1})=\widetilde {\bm F}_j\mbox{on}\;\Gamma_j^b,\; j=2,...,N-2,\\
	\label{MulPMLBIEN}
	&&\mathbb E_{N-1} (\widetilde{\bm\phi}_{N-1})+\widetilde{\mathbb L}_{N-1}(\widetilde{\bm\phi}_{N-2})+\widetilde{\mathbb T}_{N-1}(\widetilde{\bm\phi}_{N-1})=\widetilde {\bm F}_{N-1}\quad\mbox{on}\quad\Gamma_{N-1}^b,
	\en
	where
	\ben
	\mathbb E_j=\begin{bmatrix}
		\frac{\epsilon_j+\epsilon_{j+1}}{2} &0\\
		0 &\frac{\mu_j+\mu_{j+1}}{2}
	\end{bmatrix}, \quad {\mathbb T}_j=\begin{bmatrix}
		\epsilon_j\widetilde K^{j,j}_j-\epsilon_{j+1}\widetilde K^{j,j}_{j+1} &\frac{i}{\omega}(\widetilde N_j^{j,j}-\widetilde N_{j+1}^{j,j})\\
		\frac{i}{\omega}(\widetilde N_{j+1}^{j,j}-\widetilde N_{j}^{j,j}) & \mu_j\widetilde K^{j,j}_j-\mu_{j+1}\widetilde K^{j,j}_{j+1}
	\end{bmatrix},
	\enn
	\ben
	{\mathbb R}_j=\begin{bmatrix}
		\epsilon_{j+1} \widetilde K^{j,j+1}_{j+1} &\frac{i}{\omega} \widetilde N_{j+1}^{j,j+1}\\
		-\frac{i}{\omega} \widetilde N_{j+1}^{j,j+1} & \mu_{j+1} \widetilde K^{j,j+1}_{j+1}
	\end{bmatrix},\quad {\mathbb L}_j=\begin{bmatrix}
		-\epsilon_{j} \widetilde K^{j,j-1}_{j} &-\frac{i}{\omega}\widetilde N_{j}^{j,j-1}\\
		\frac{i}{\omega}\widetilde N_{j}^{j,j-1} & -\mu_{j}\widetilde K^{j,j-1}_{j}
	\end{bmatrix}.
	\enn
	and
	\ben
	\widetilde{\bm\phi}_j=\begin{pmatrix}
		\widetilde {\bm M}_j\\
		\widetilde {\bm J}_j
	\end{pmatrix},\quad \widetilde {\bm F}_j=\begin{pmatrix}
		\epsilon_j(\bm{\nu}\times \widetilde {\bm E}_j^{\rm src}-\bm{\nu}\times \widetilde {\bm E}_{j+1}^{\rm src})|_{\Gamma_j}\\
		\mu_j(\bm{\nu}\times \widetilde {\bm H}_j^{\rm src}-\bm{\nu}\times \widetilde {\bm H}_{j+1}^{\rm src})|_{\Gamma_j}
	\end{pmatrix}.
	\enn	

	\section{Numerical experiments}
	\label{sec5}

	\subsection{Implementation strategy}
	\label{sec5.1}
	
	Making use of the regularization formulations presented in Section~\ref{sec3.4}, it follows that the numerical evaluation of all the BIOs in (\ref{PMLBIE}), (\ref{PMLPECBIE})-(\ref{PMLPMCBIE}) and (\ref{MulPMLBIE1})-(\ref{MulPMLBIEN}) can be degenerated into the discretization of weakly-singular integrals as well as the surface differential operators $\overrightarrow{\curl}_\Gamma$ and $\divv_{\Gamma}$. In this work, the Chebyshev-based rectangular-polar solver proposed in~\cite{BG20,BY20} will be utilized and we refer to \cite{LXYZ23} for the discretization of of the PML-transformed weakly-singular integrals and the operator $\overrightarrow{\curl}_\Gamma$. Here, we only supplement the evaluation of the surface divergence operator $\divv_{\Gamma}$.
	
	In terms of a non-overlapping parameterized (logically-rectangular) partition
	\ben
	\Gamma=\bigcup_{q=1}^M \Gamma_q, \quad \Gamma_q=\left\{ \bm{x}^q(u,v)=(x_1^q(u,v),x_2^q(u,v),x_3^q(u,v))^\top:\left[-1,1\right]^2\to \R^3 \right\}
	\enn
	of the surface $\Gamma^b$, each tangential field $\bm\varphi$ on $\Gamma_q$ admits a representation
	\ben
	\bm\varphi(\bm x^q)=\varphi_1(\bm x^q)\frac{\partial \bm{x}^q}{\partial u}+\varphi_2(\bm x^q)\frac{\partial \bm{x}^q}{\partial v},\quad \bm x^q\in \Gamma_q.
	\enn
	which further implies that
	\be
	\label{divG}
	(\divv_\Gamma \bm\varphi)(\bm x^q)=\frac{1}{\left|\mathbb G\right|}\left\{\frac{\partial}{\partial u}(\sqrt{\left|\mathbb G\right|}\varphi_1)+\frac{\partial}{\partial v}(\sqrt{\left|\mathbb G\right|}\varphi_2)\right\},
	\en
	where $\left|\mathbb G\right|$ denotes the determinant of the metric tensor $\mathbb G$ given by
	\ben
	\mathbb G(\bm x^q)=\begin{pmatrix}
		\frac{\partial \bm x^q}{\partial u}\cdot \frac{\partial \bm x^q}{\partial u}&\frac{\partial \bm x^q}{\partial u}\cdot \frac{\partial \bm x^q}{\partial v}\\
		\frac{\partial \bm x^q}{\partial v}\cdot \frac{\partial \bm x^q}{\partial u}&\frac{\partial \bm x^q}{\partial v}\cdot \frac{\partial \bm x^q}{\partial v}
	\end{pmatrix}, \quad \bm x^q\in\Gamma_q.
	\enn
	Given a fixed integer $N_p>0$ and the discretization points $\bm x^q_{ij}=\bm x^q(u_i,v_j)$ with
	\ben
	u_i=\cos\left(\frac{2i+1}{2N_p}\pi\right),\quad v_j=\cos\left(\frac{2j+1}{2N_p}\pi\right),\quad i,j=0,...,N_p-1.
	\enn
	the density function $\bm\varphi$ can be approximated on $\Gamma_q$ by the Chebyshev expansion
	\ben
	\bm\varphi (\bm x) \approx \sum\limits_{i,j = 0}^{N_p - 1} (\varphi_1)_{ij}^q a_{ij}(u,v)\frac{\partial \bm{x}^q}{\partial u}+\sum\limits_{i,j = 0}^{N_p - 1} (\varphi_2)_{ij}^q a_{ij}(u,v)\frac{\partial \bm{x}^q}{\partial v}, \quad \bm x\in \Gamma_q,
	\enn
	where
	\ben
	a_{ij}(u,v)=\frac{1}{N_p^2}\sum^{N_p-1}_{m,n=0} \alpha_n\alpha_mT_n(u_i)T_m(v_j)T_n(u)T_m(v),\quad {\alpha _n} = \begin{cases}
		1, & n=0,\cr
		2, & n\neq 0,
	\end{cases}
	\enn
	and $(\varphi_1) _{ij}^q=\varphi_1({\bm x}^q_{ij})$, $(\varphi_2) _{ij}^q=\varphi_2({\bm x}^q_{ij})$. Hence, the term $\divv_\Gamma\bm\varphi$ at each discretization point $\bm x^q_{ij}$ can be approximated through
	\ben
	(\divv_\Gamma\bm\varphi)(\bm x^q_{ij})=\sum_{n,m=0}^{N_p-1}(B_1)_{ij,nm}^q(\varphi_1)_{n,m}^q +\sum_{n,m=0}^{N_p-1}(B_2)_{ij,nm}^q(\varphi_2)_{n,m}^q,
	\enn
	where
	\ben
	&\quad&(B_1)_{ij,nm}^q\\
	&&=\frac{1}{\left|\mathbb G\right|}\left(\left|\frac{\partial \bm x^q}{\partial v}\right|^2\frac{\partial \bm x^q}{\partial u}\cdot\frac{\partial^2 \bm x^q}{\partial u^2}+\left|\frac{\partial \bm x^q}{\partial u}\right|^2\frac{\partial \bm x^q}{\partial v}\cdot\frac{\partial^2 \bm x^q}{\partial u\partial v}\right)a_{nm}\Big|_{u=u_i,v=v_j}\\
	&&+\left[\frac{\partial a_{n,m}}{\partial u}-\frac{1}{\left|\mathbb G\right|}\frac{\partial \bm x^q}{\partial u}\cdot\frac{\partial \bm x^q}{\partial v}\left(\frac{\partial \bm x^q}{\partial v}\cdot\frac{\partial^2 \bm x^q}{\partial u^2}+\frac{\partial \bm x^q}{\partial u}\cdot\frac{\partial^2 \bm x^q}{\partial u\partial v}\right)a_{n,m}\right]\Big|_{u=u_i,v=v_j},\\
	&\quad&(B_2)_{ij,nm}^q\\
	&&=\frac{1}{\left|\mathbb G\right|}\left(\left|\frac{\partial \bm x^q}{\partial v}\right|^2\frac{\partial \bm x^q}{\partial u}\cdot\frac{\partial^2 \bm x^q}{\partial u\partial v}+\left|\frac{\partial \bm x^q}{\partial u}\right|^2\frac{\partial \bm x^q}{\partial v}\cdot\frac{\partial^2 \bm x^q}{\partial v^2}\right)a_{nm}\Big|_{u=u_i,v=v_j}\\
	&&+\left[\frac{\partial a_{n,m}}{\partial v}-\frac{1}{\left|\mathbb G\right|}\frac{\partial \bm x^q}{\partial u}\cdot\frac{\partial \bm x^q}{\partial v}\left(\frac{\partial \bm x^q}{\partial v}\cdot\frac{\partial^2 \bm x^q}{\partial u\partial v}+\frac{\partial \bm x^q}{\partial u}\cdot\frac{\partial^2 \bm x^q}{\partial v^2}\right)a_{n,m}\right]\Big|_{u=u_i,v=v_j}.
	\enn

	\subsection{Numerical examples}
	\label{sec5.2}
	
	This section will present several numerical examples to demonstrate the efficiency and accuracy of the proposed PML-BIE method for solving the layered-medium electromagnetic scattering problems. The resulted linear system is solved by means of the fully complex version of the iterative solver GMRES with residual tolerance $\epsilon_r=10^{-8}$. All the numerical results presented in this paper were obtained by means of Fortran implementations, parallelized using OpenMP. The relative maximum error is defined by
	\be
	\label{RE}
	\epsilon_{\infty}&:=\frac{\mbox{max}_{\bm x\in \Gamma_{\mathrm{test}}}{\left|\bm E^{\rm num}(\bm x)-\bm E^{\rm ref}(\bm x)\right|}}{\mbox{max}_{\bm x\in \Gamma_{\mathrm{test}}}{\left|\bm E^{\rm ref}(\bm x)\right|}},
	\en
	where $\bm E^{\rm ref}$ denotes the exact solutions, when available, or numerical solution based on a sufficiently fine discretization, and where $\Gamma_{\mathrm{test}}$ is a suitably selected square plane away from the interfaces. In all cases, unless otherwise specified, the parameters for PML are set to be $P=6$, $S=6$ and $T_i=2\lambda$,  $i=1,2$, where $\lambda=2\pi/k_1$ denotes the wavelength.
	
	\begin{figure}[htb]
		\centering
		\begin{tabular}{cc}
			\includegraphics[scale=0.15]{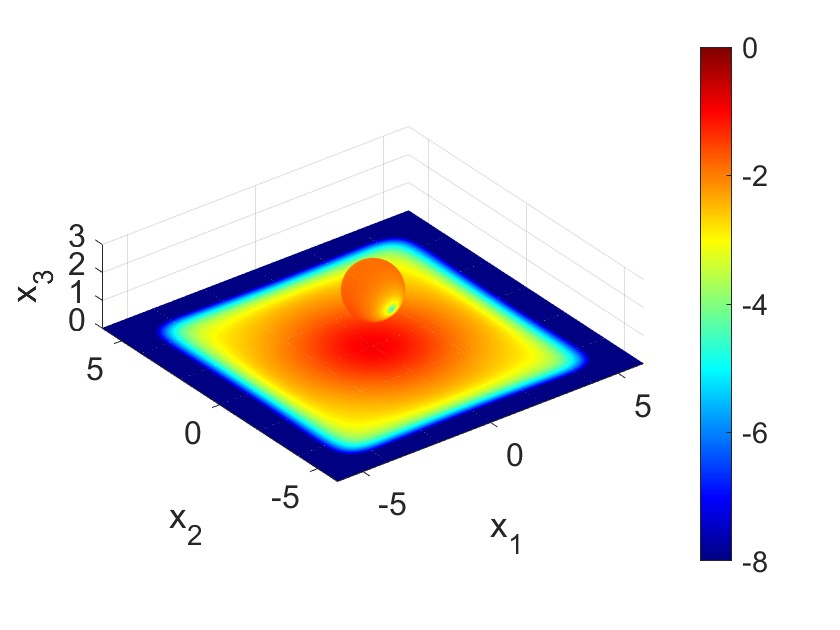} &
			\includegraphics[scale=0.15]{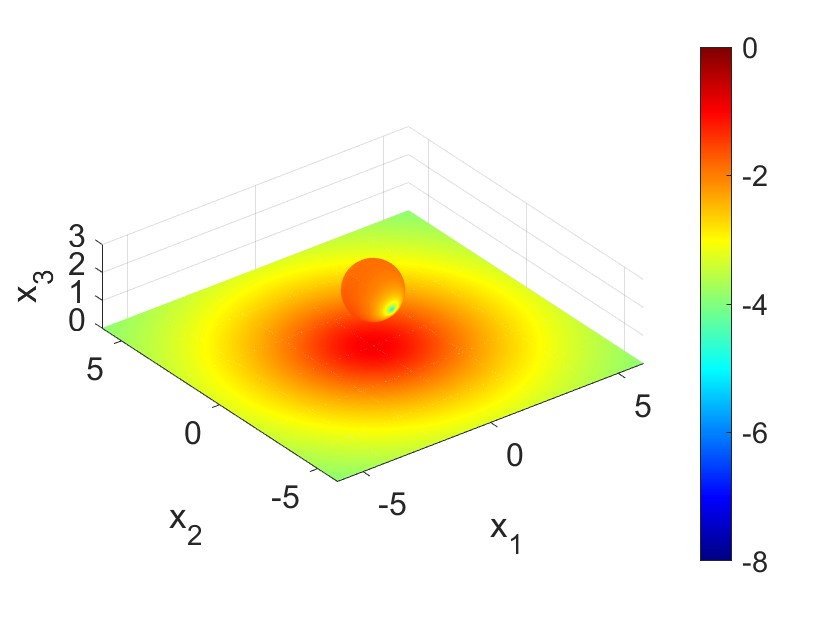} \\
			(a) $\log(\left|\widetilde {\bm J}\right|)$ &(b) $\log(\left|\bm J\right|)$ \\
			\includegraphics[scale=0.15]{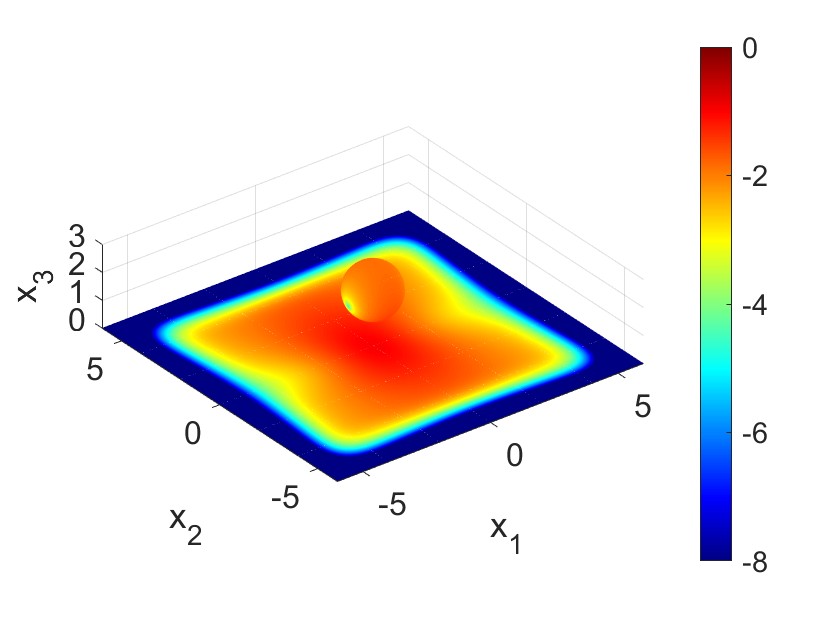} &
			\includegraphics[scale=0.15]{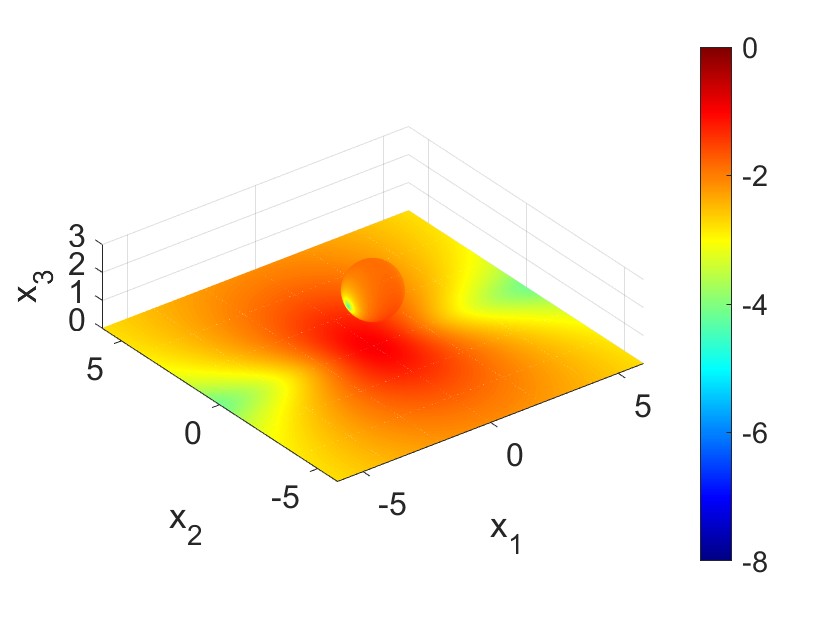} \\
			(c) $\log(\left|\widetilde {\bm M}\right|)$ &(d) $\log(\left|\bm M\right|)$ \\
		\end{tabular}
		\caption{Example 1. Absolute values of the numerical solution to the BIEs~(\ref{PMLPECBIE})-(\ref{PMLPMCBIE}) on $\Gamma_*^b$ as well as the exact values. (a)(b): PEC problem; (c)(d): PMC problem.}
		\label{Halfexample1}
	\end{figure}

	\begin{figure}[htb]
		\centering
		\begin{tabular}{cc}
			\includegraphics[scale=0.15]{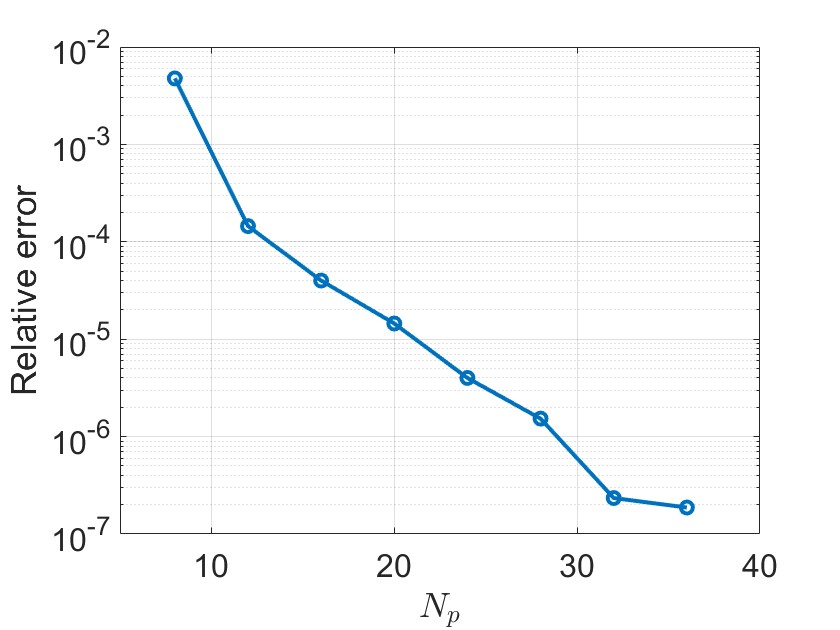} &
			\includegraphics[scale=0.15]{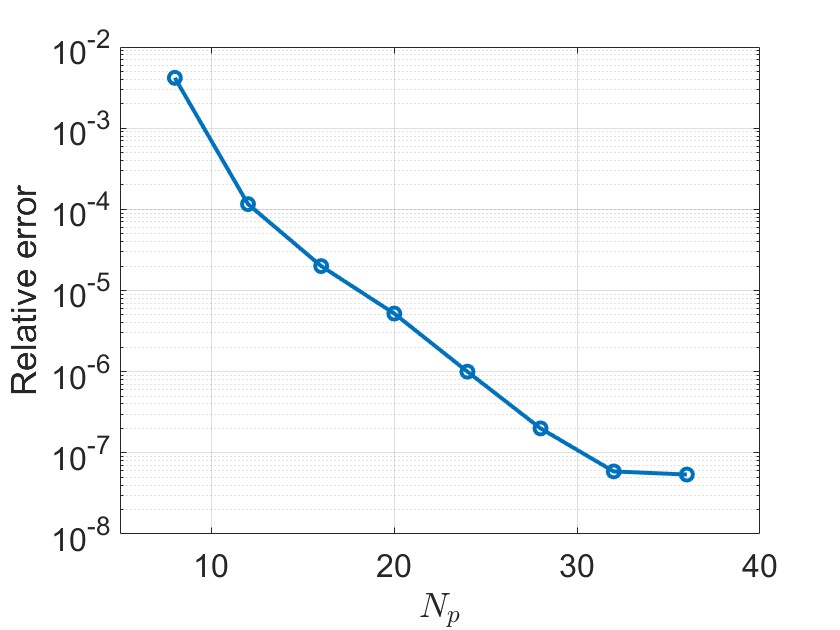} \\
			(a) PEC problem &(b) PMC problem
		\end{tabular}
		\caption{Example 1. Numerical errors $\epsilon_{\infty}$ for the PEC and PMC problems of scattering by a spherical obstacle on the half-space.}
		\label{Halfexample2}
	\end{figure}

	\begin{figure}[htb]
		\centering
		\begin{tabular}{cc}
			\includegraphics[scale=0.25]{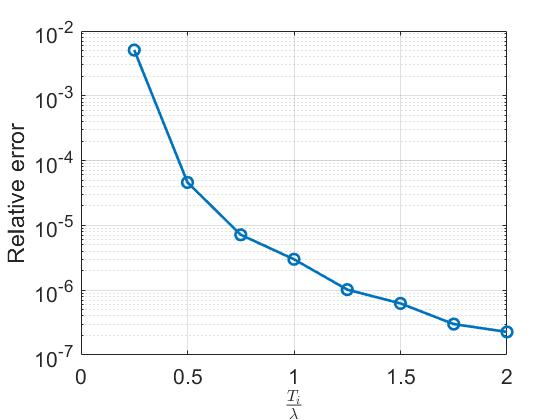} &
			\includegraphics[scale=0.25]{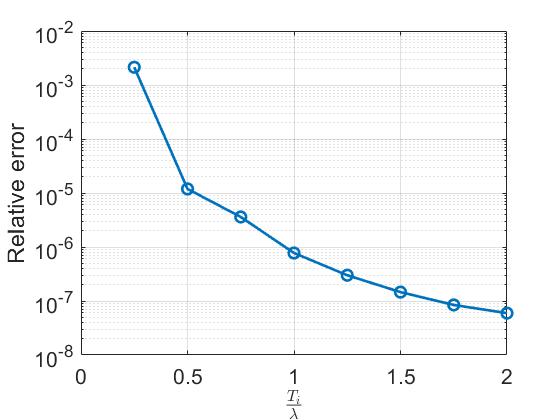} \\
			(a) PEC problem &(b) PMC problem
		\end{tabular}
		\caption{Example 1. Numerical errors $\epsilon_{\infty}$ with respect to $\frac{T_i}{\lambda}$.}
		\label{Halfexample3}
	\end{figure}
	
	{\bf Example 1.} We consider the problem of scattering by a spherical obstacle within a half space. We set $a_i=2$, $i=1,2$. The exact solutions ($\bm E^{\rm exa},\bm H^{\rm exa}$) are given by
	\ben
	\bm E^{\rm exa}(\bm x)=\nabla_{\bm x}\times\nabla_{\bm x}\times\begin{pmatrix}
		\Phi(k_1,\bm x,\bm z)\\
		0\\
		0
	\end{pmatrix},\quad \bm H^{\rm exa}(\bm x)=\frac{i}{\omega\epsilon_1}\nabla_{\bm x}\times\bm E^{\rm exa}(\bm x)
	\enn
	with $\bm z=(0,0,2)^\top$ located inside the obstacle. Fig.~\ref{Halfexample1} displays the numerical solutions of the BIEs (\ref{PMLPECBIE})-(\ref{PMLPMCBIE}) on $\Gamma_*^b$ with $\epsilon_1=1$, $\mu_1=1$ and $\omega=\pi$. It can be seen that the numerical solutions match perfectly with the exact solutions on $\Gamma_*^b\cap\ B_a$, and decay quickly on $\Gamma_*^b\cap B_{a,T}$. The relative errors $\epsilon_{\infty}$ with respect to different $N_p$ are depicted in Fig.~\ref{Halfexample2}, which clearly demonstrates the high accuracy and fast convergence of the PML-BIE solver. Letting $\omega=2\pi$, Fig.~\ref{Halfexample3} presents the relative errors with respect to $\frac{T_i}{\lambda}$ which shows that setting the PML thickness to be twice wavelength is sufficient to obtain high accuracy.

	\begin{figure}[htb]
		\centering
		\begin{tabular}{ccc}
			\includegraphics[scale=0.12]{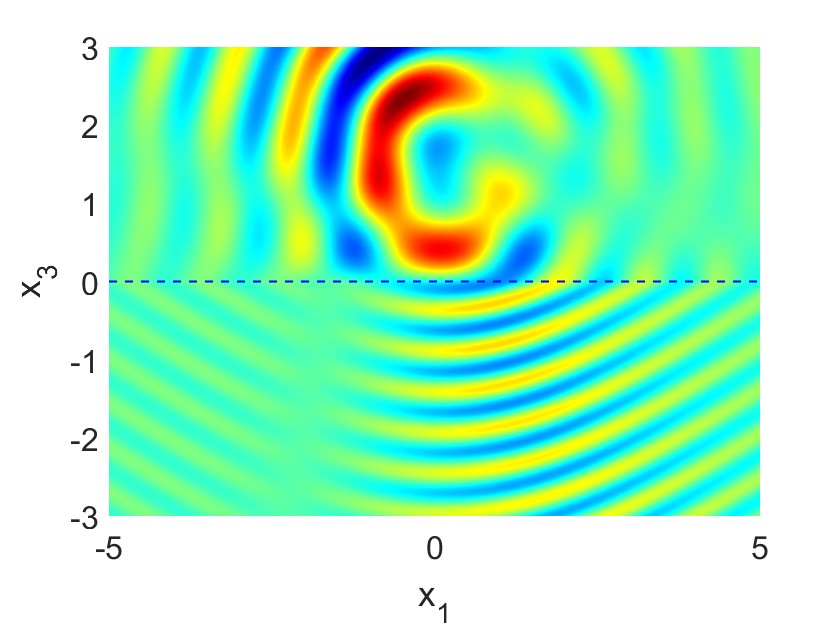} &
			\includegraphics[scale=0.12]{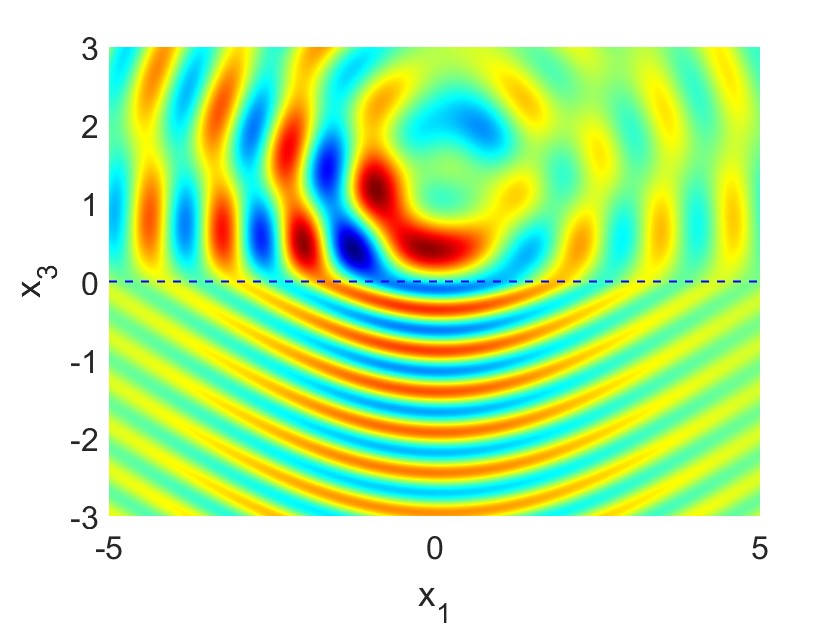} &
			\includegraphics[scale=0.12]{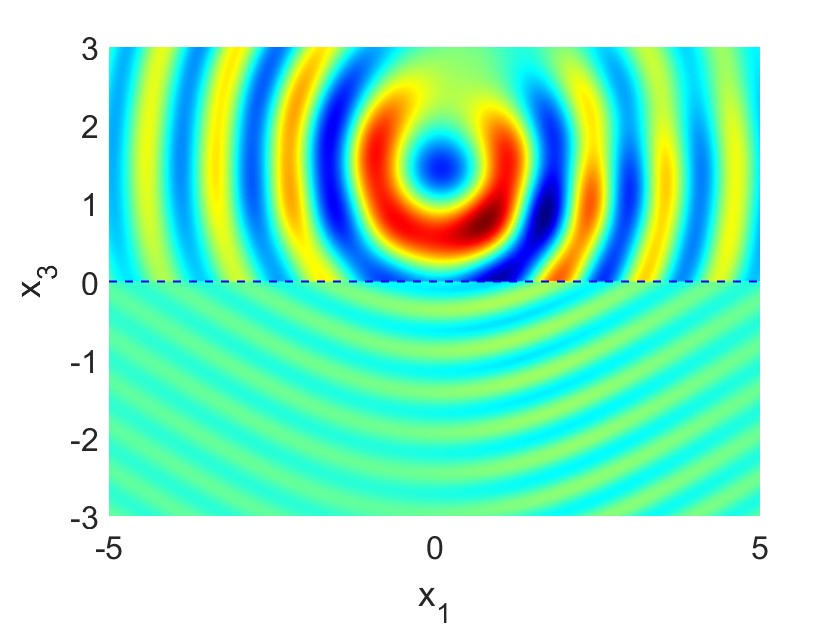} \\
			(a) $\real( E_{j,x_1})$  &(b) $\real( E_{j,x_2})$&(c) $\real( E_{j,x_3})$    \\
			\includegraphics[scale=0.12]{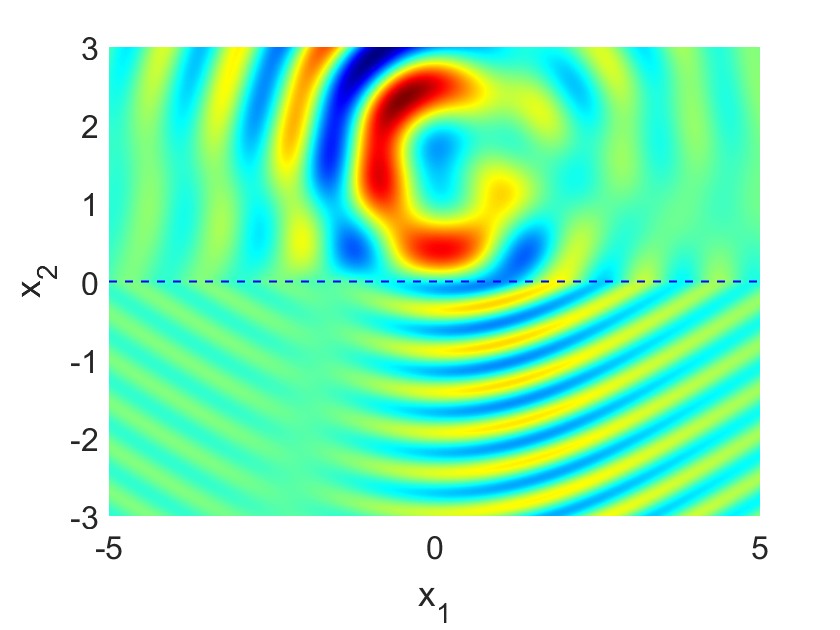} &
			\includegraphics[scale=0.12]{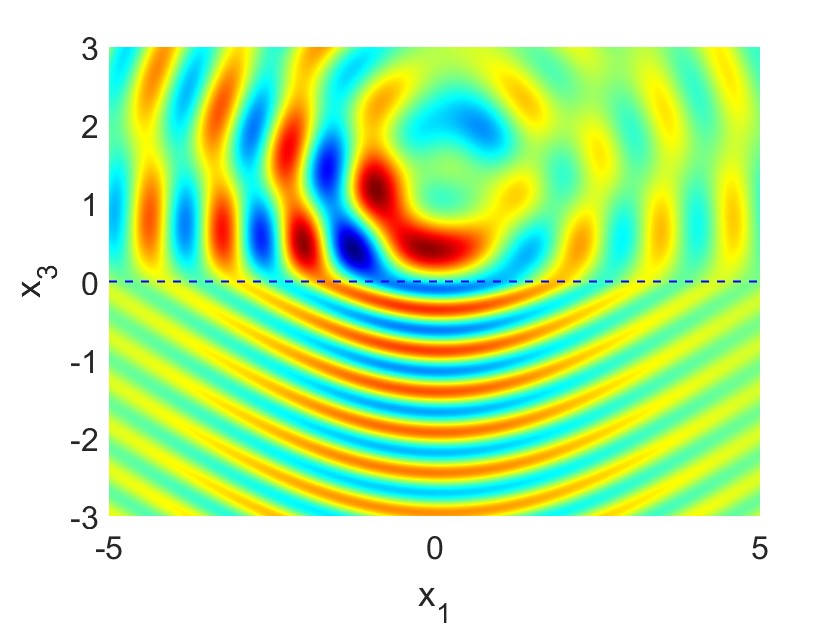} &
			\includegraphics[scale=0.12]{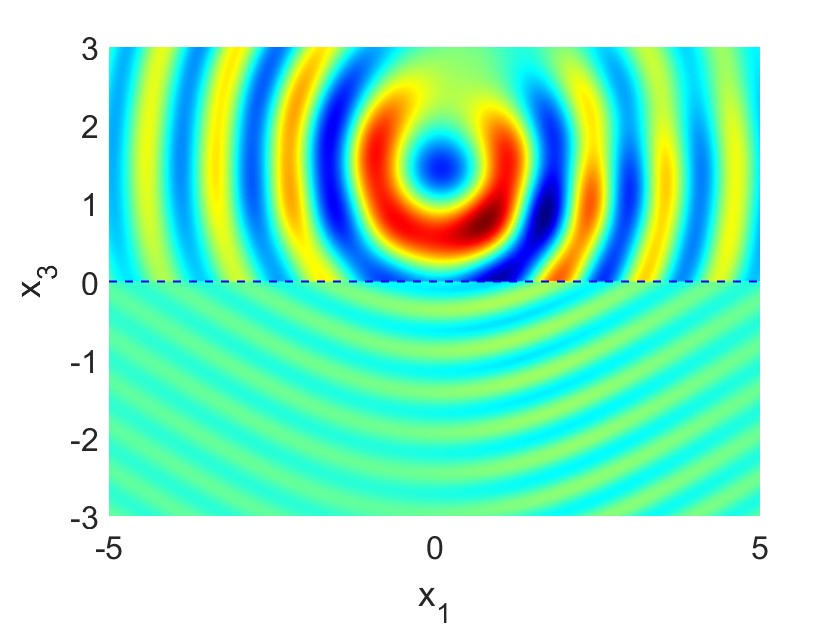} \\
			(d) $\real( E_{j,x_1})$  &(e) $\real( E_{j,x_2})$&(f) $\real( E_{j,x_3})$   \\
		\end{tabular}
		\caption{Example 2. Real parts of the three components of the total electric fields in a two-layered structure resulting from the PML-BIE solver (a,b,c) and the software Feko 2019 (d,e,f).}
		\label{MulExam1}
	\end{figure}
	
	\begin{figure}[htb]
		\centering
		\begin{tabular}{ccc}
			\includegraphics[scale=0.12]{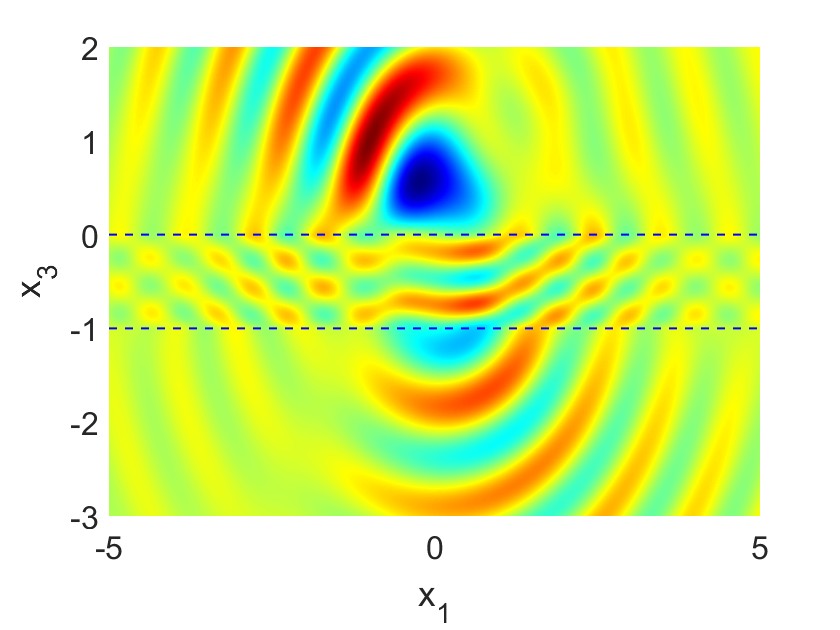} &
			\includegraphics[scale=0.12]{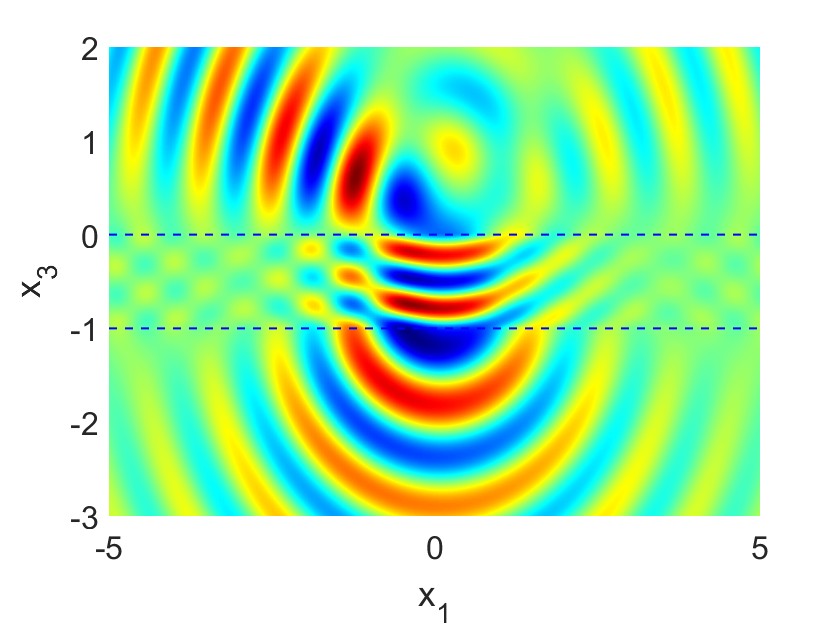} &
			\includegraphics[scale=0.12]{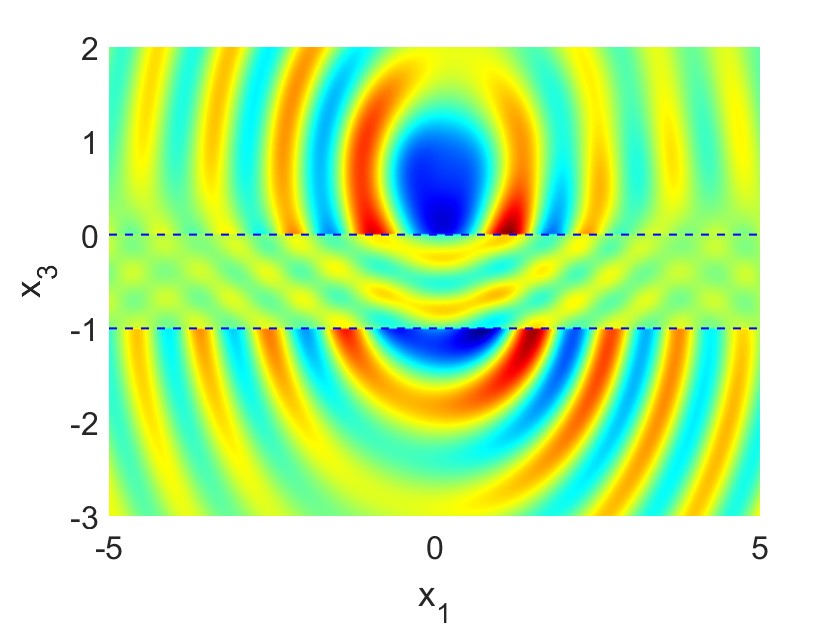} \\
			(a) $\real( E_{j,x_1})$  &(b) $\real( E_{j,x_2})$&(c) $\real( E_{j,x_3})$    \\
			\includegraphics[scale=0.12]{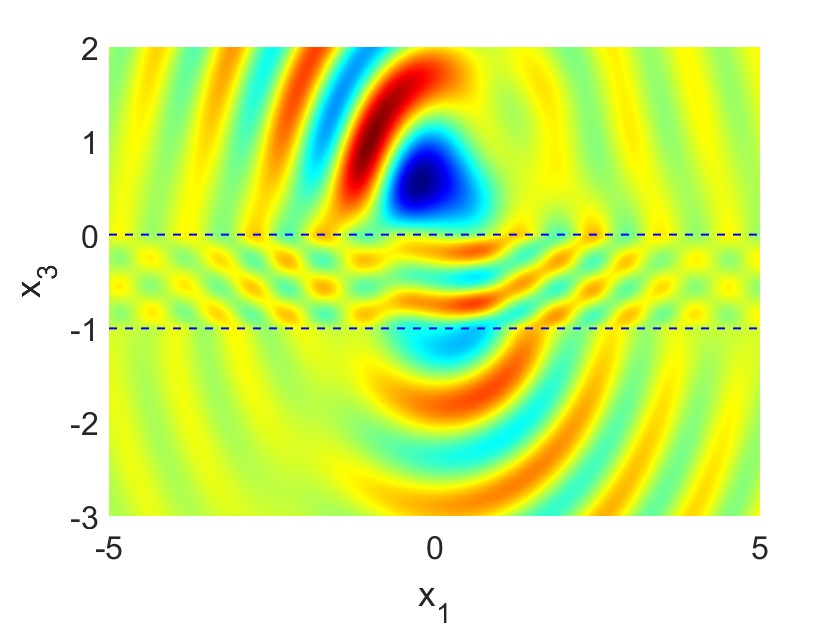} &
			\includegraphics[scale=0.12]{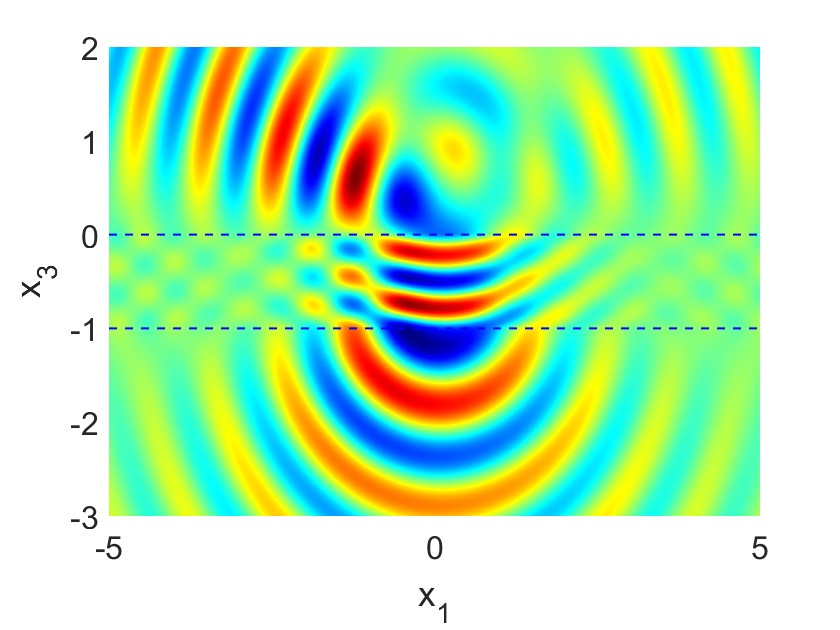} &
			\includegraphics[scale=0.12]{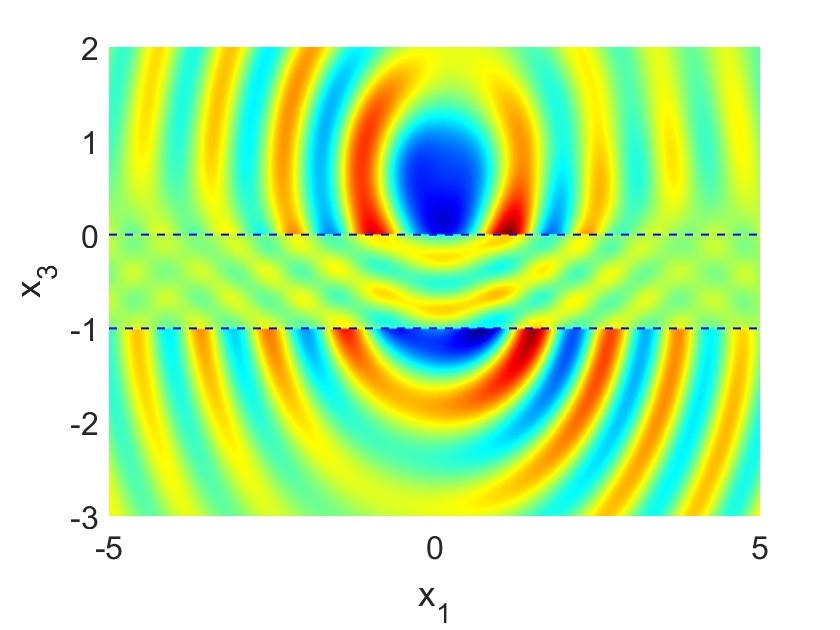} \\
			(d) $\real( E_{j,x_1})$  &(e) $\real( E_{j,x_2})$&(f) $\real( E_{j,x_3})$   \\
		\end{tabular}
		\caption{Example 2. Real parts of the three components of the total electric fields in a three-layered structure resulting from the PML-BIE solver (a,b,c) and the software Feko 2019 (d,e,f).}
		\label{MulExam2}
	\end{figure}

	\begin{figure}[htb]
		\centering
		\begin{tabular}{cc}
			\includegraphics[scale=0.15]{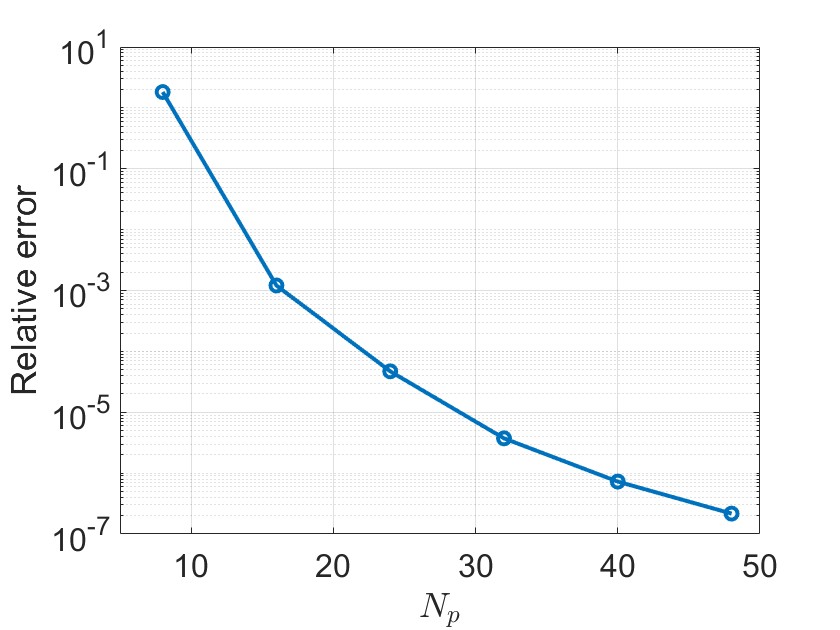} &
			\includegraphics[scale=0.15]{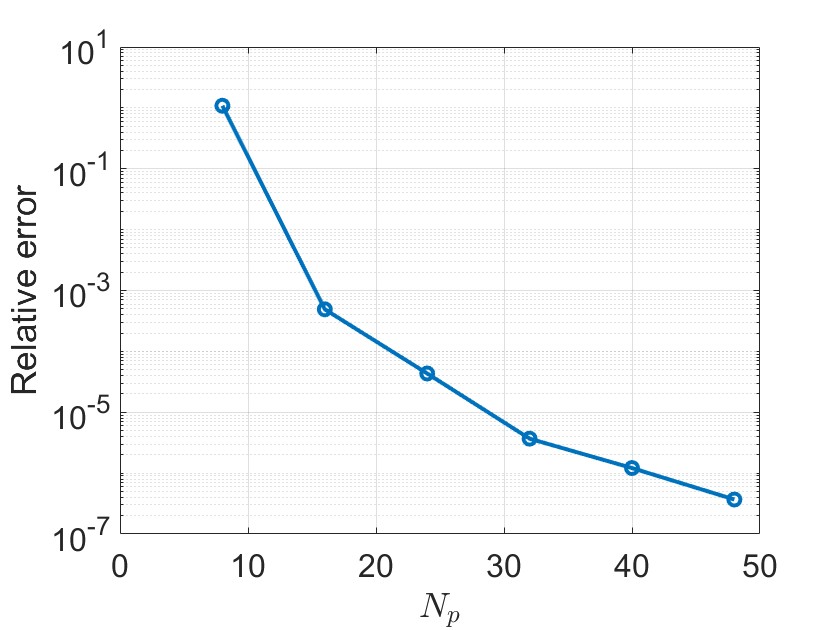} \\
			(a) Two-layered medium &(b) Three-layered medium    \\
		\end{tabular}
		\caption{Example 2. Numerical errors $\epsilon_{\infty}$ for the two- and three-layered medium problems with respect to $N_p$.}
		\label{MulExam4}
	\end{figure}
	
	{\bf Example 2.} In this example, we verify the efficiency of the proposed method for solving the multi-layered medium scattering problems. We first consider a two-layered medium problem with the interface $x_3=0$ and a dipole source (\ref{PSW}) located at $(0.1,-0.2,1.5)^\top$ with $\bm p=(\frac{1}{2},\frac{1}{2},\frac{1}{\sqrt 2})$. We choose $\epsilon_1=1$, $\epsilon_2=4$, $\mu_1=1$, $\mu_2=1$ and $\omega=2\pi$. Fig.~\ref{MulExam1} presents the real parts of the total electric fields at $x_2=1.2$ produced by the PML-BIE method, which are consistent with the fields resulting from the software Feko 2019. Next, we consider a three-layered structure which consists of two interfaces at $x_3=0$ and $x=-1$ and the dipole source is placed at $(0.1,-0.2,0.5)^\top$ with $\bm p=(\frac{1}{2},\frac{1}{2},\frac{1}{\sqrt 2})$. We set $\epsilon_1=1$, $\epsilon_2=4$, $\epsilon_3=1.1$, $\mu_1=1$, $\mu_2=1$, $\mu_3=1$ and $\omega=2\pi$. The total electric fields at $x_2=1$ resulting from the PML-BIE method and the software Feko 2019 are presented in Fig.~\ref{MulExam2} which demonstrate the efficiency of the proposed method.
	The relative errors for different values $N_p$ are depicted in Fig.~\ref{MulExam4} which clearly demonstrate high accuracy and rapid convergence of the proposed approach.
	
	\begin{figure}[htb]
		\centering
		\begin{tabular}{cc}
			\includegraphics[scale=0.15]{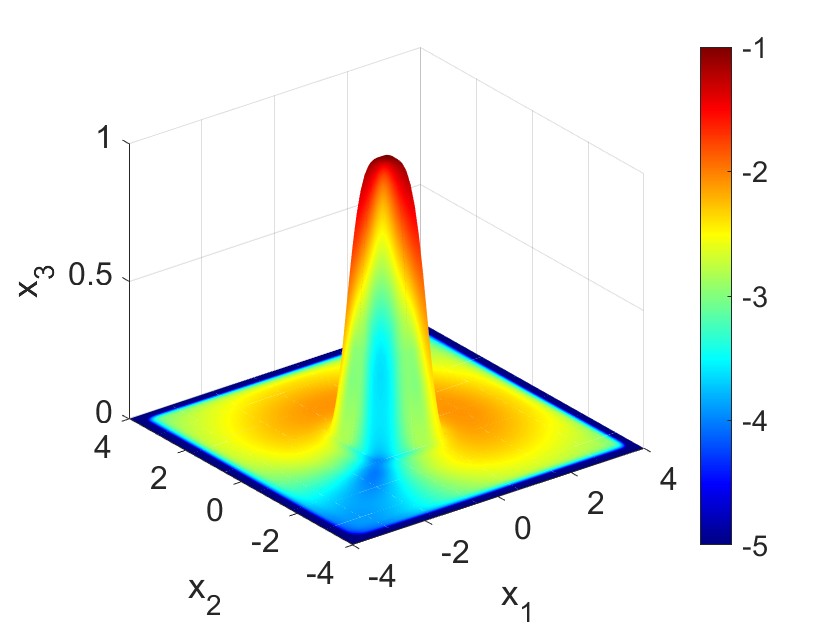} &
			\includegraphics[scale=0.15]{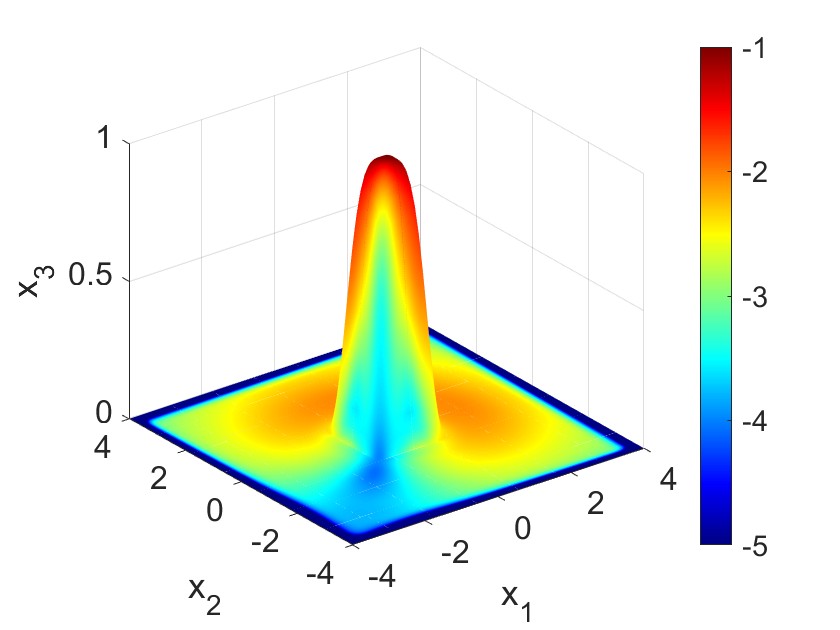} \\
			(a) $\log(|\widetilde{\bm M}|)$ &(b) $\log(|\widetilde{\bm J}|)$   \\
		\end{tabular}
		\caption{Example 3. Absolute values of the numerical solutions to the BIE (\ref{PMLBIE}) on $\Gamma^b$.}
		\label{MulExam5}
	\end{figure}

	\begin{figure}[htb]
		\centering
		\begin{tabular}{ccc}
			\includegraphics[scale=0.12]{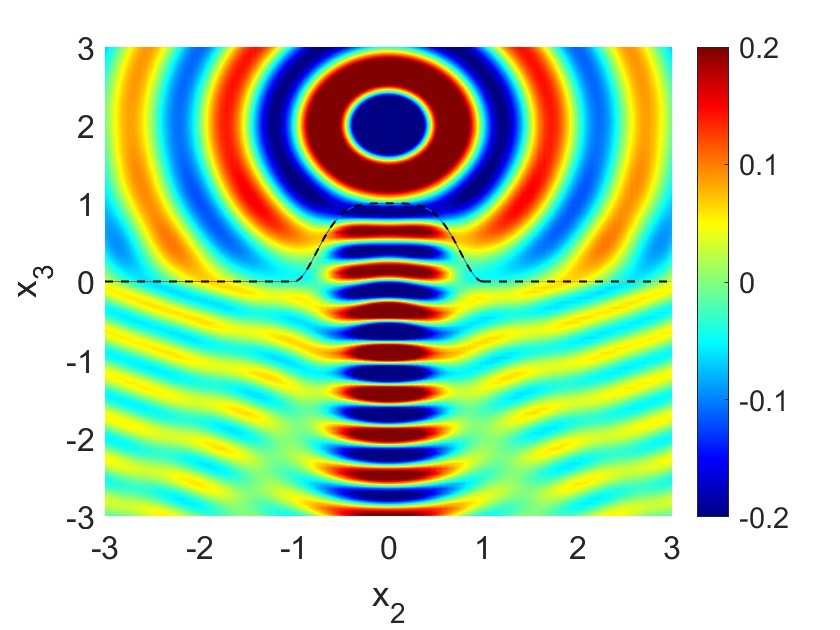} &
			\includegraphics[scale=0.12]{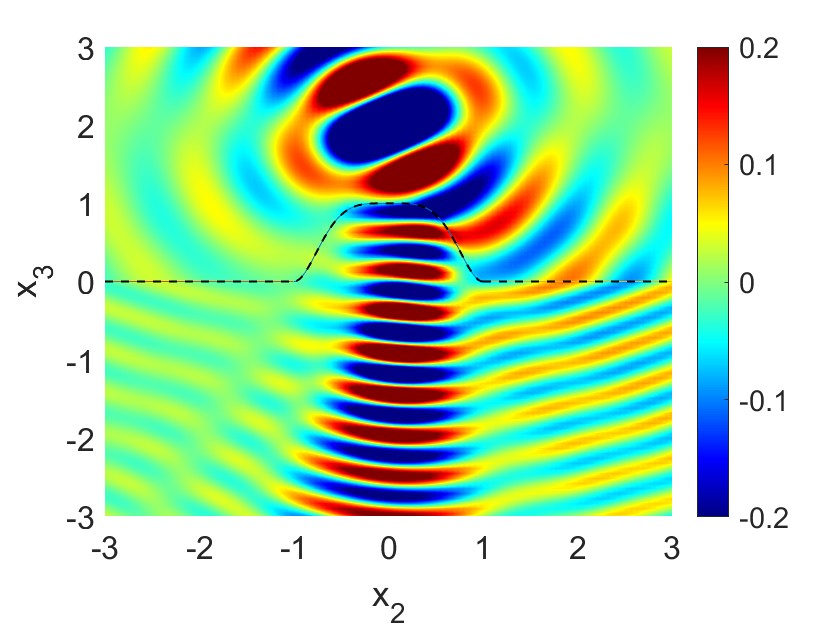} &
			\includegraphics[scale=0.12]{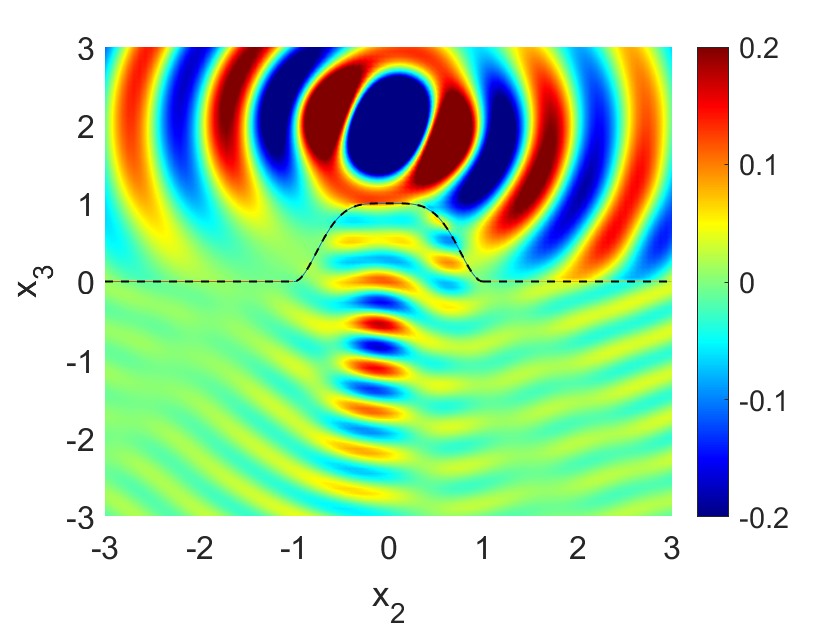} \\
			(a) $\real ( E_{j,x_1})$ &(b) $\real ( E_{j,x_2})$&(c) $\real ( E_{j,x_3})$   \\
			\includegraphics[scale=0.12]{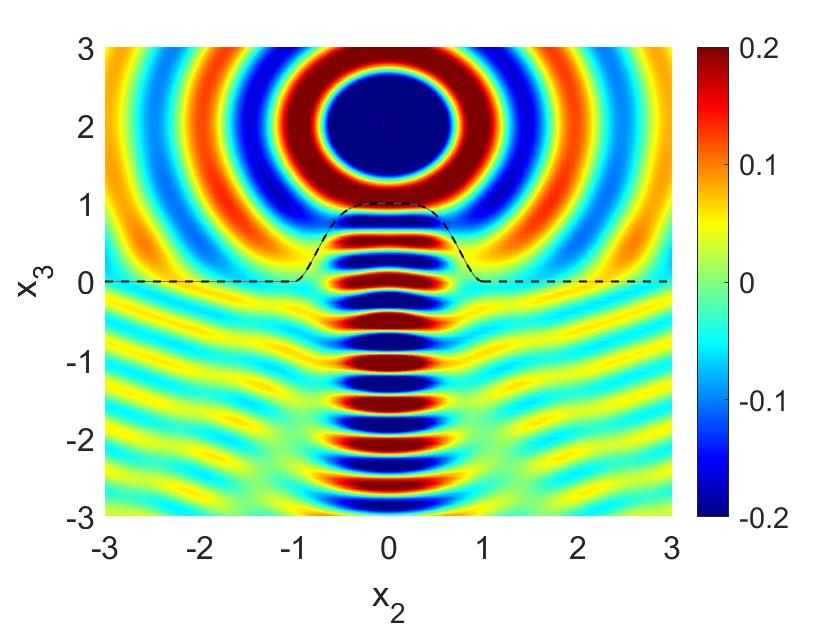} &
			\includegraphics[scale=0.12]{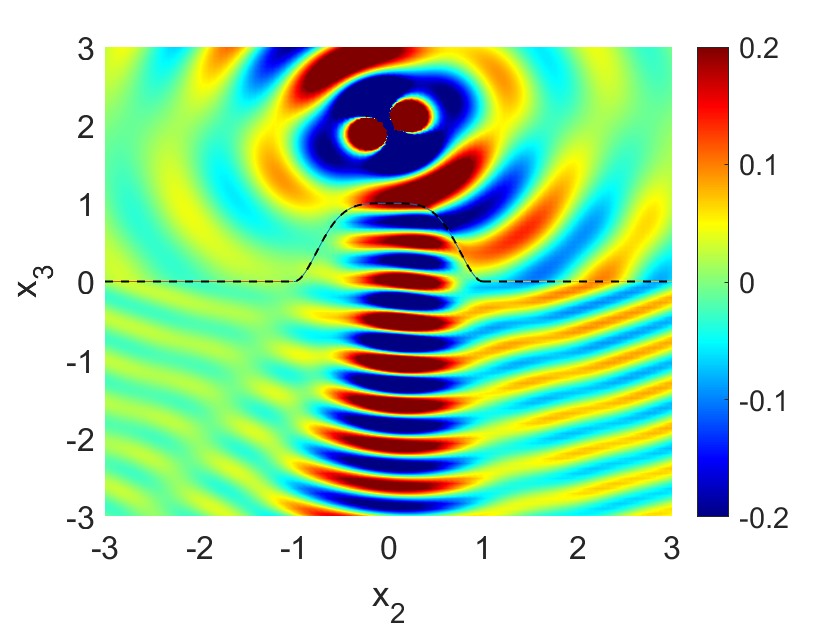} &
			\includegraphics[scale=0.12]{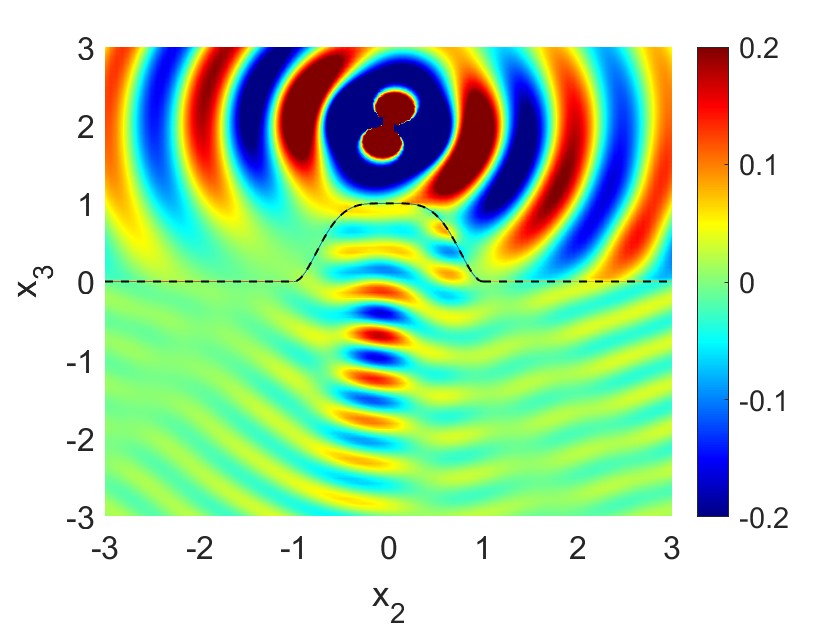} \\
			(d) $\Ima ( E_{j,x_1})$ &(e) $\Ima ( E_{j,x_2})$&(f) $\Ima ( E_{j,x_3})$   \\
		\end{tabular}
		\caption{Example 3. Real and imaginary parts of the three components of the total electric fields at $x_1=0$ for the two-layered medium problem.}
		\label{MulExam6}
	\end{figure}

	\begin{figure}[htb]
		\centering
		\begin{tabular}{ccc}
			\includegraphics[scale=0.15]{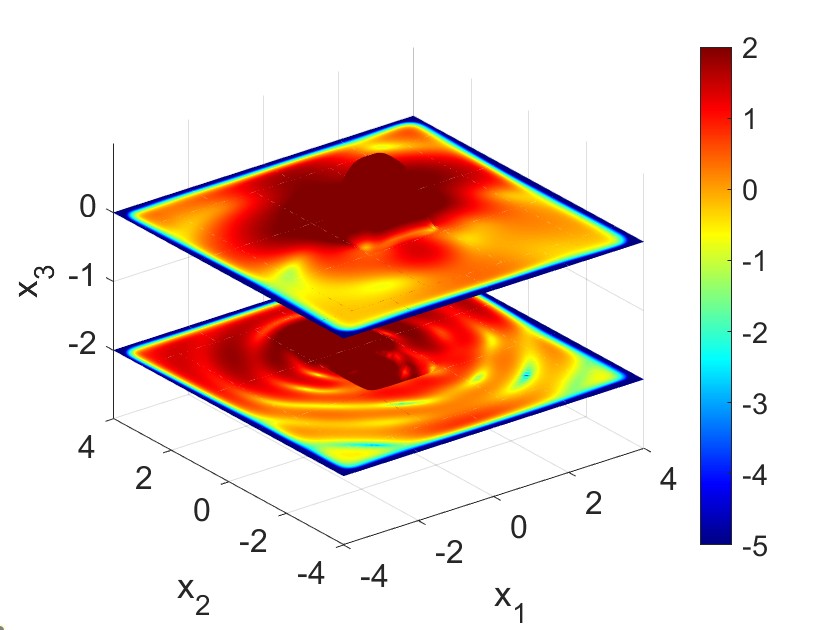} &
			\includegraphics[scale=0.15]{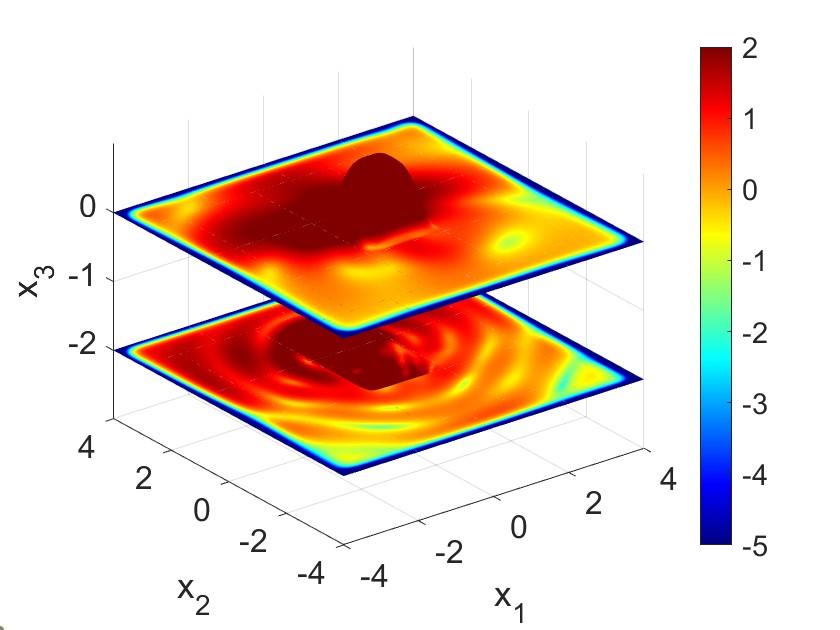} \\
			(a) $\log(|\widetilde {\bm M}|)$ & (b) $\log(|\widetilde {\bm J}|)$
		\end{tabular}
		\caption{Example 3. Absolute values of the numerical solutions to the BIEs (\ref{MulPMLBIE1})-(\ref{MulPMLBIEN}) on $\Gamma_j^b$, $j=1,2$.}
		\label{MulExam7}
	\end{figure}
	
	\begin{figure}[htb]
		\centering
		\begin{tabular}{ccc}
			\includegraphics[scale=0.12]{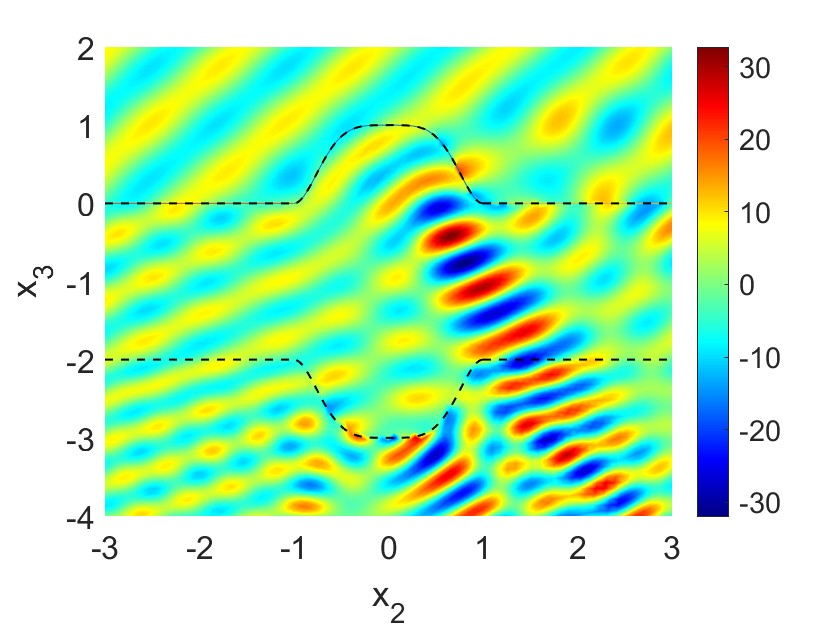} &
			\includegraphics[scale=0.12]{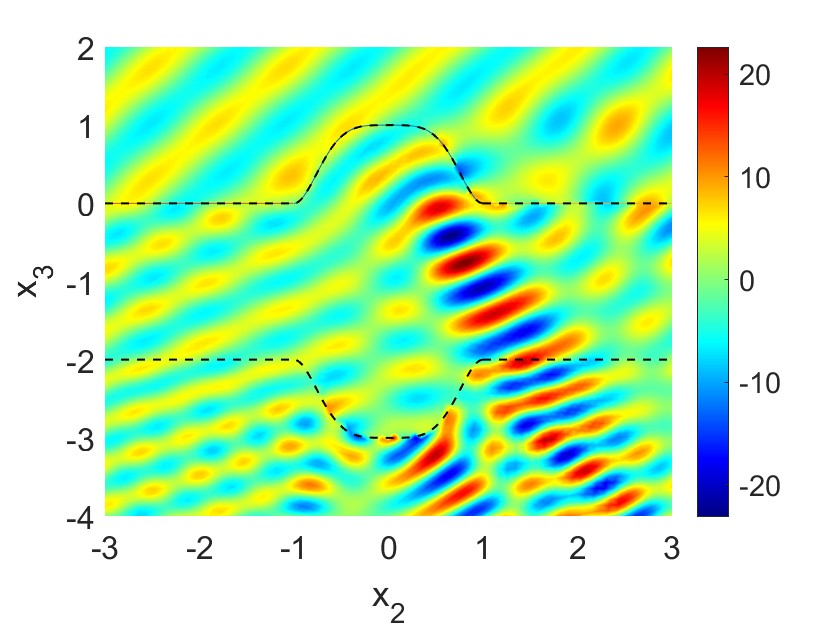} \\
			(a) $\real ( E_{j,x_1})$  & (b) $\real (H_{j,x_1})$\\
			\includegraphics[scale=0.12]{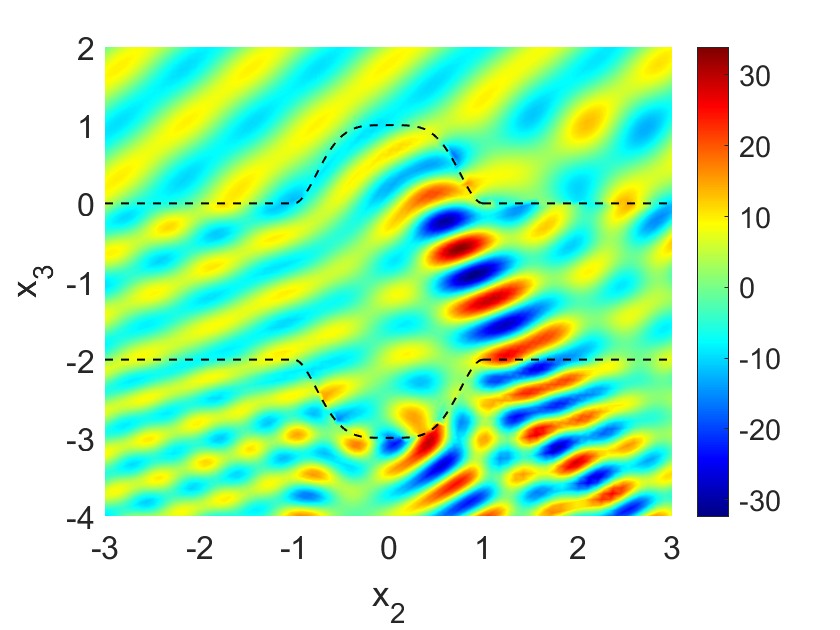} &
			\includegraphics[scale=0.12]{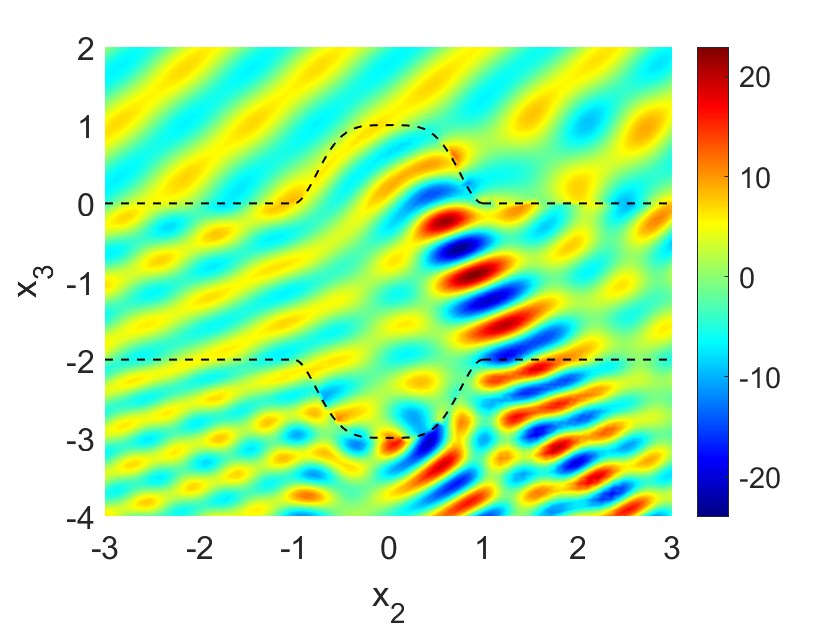} \\
			(c)  $\Ima ( E_{j,x_1})$ &(d) $\Ima ( H_{j,x_1})$   \\
		\end{tabular}
		\caption{Example 3. Real and imaginary parts of the first component of the total electromagnetic fields at $x_1=0$.}
		\label{MulExam8}
	\end{figure}

	{\bf Example 3.} Now we consider the problems of electromagnetic scattering by "locally-rough surfaces" in both two- and three-layered medium. We first study a two-layered medium problem where the local perturbation is characterized by $x_3=0.25(\cos(x_1^2\pi)+1)(\cos(x_2^2\pi)+1)$, $x_1,x_2\in [-1,1]$ and the medium parameters are selected to be $\epsilon_1=1$, $\epsilon_2=2$, $\mu_1=1$, $\mu_2=2$ and $\omega=2\pi$. As shown in Fig.~\ref{MulExam5}, the solution of BIEs (\ref{PMLBIE}) resulting from the PML-BIE method decay rapidly on the PML interface $\Gamma_{\rm{PML}}$, which clearly illustrates the effectiveness of the proposed method. Values of the three components of the total electric fields at $x_1=0$ are depicted in Fig.~\ref{MulExam6}. Next, we consider the problem of scattering of a plane wave by local perturbed surfaces in a three-layered medium with $\epsilon_1=2$, $\epsilon_2=3$, $\epsilon_3=4$, $\mu_1=2$, $\mu_2=3$, $\mu_3=4$ and $\omega=\pi$. The perturbed interfaces coincide with the graph of the functions $x_3=0.25(\cos(x_1^2\pi)+1)(\cos(x_2^2\pi)+1)$, $x_1,x_2\in [-1,1]$ and $x_3=-0.25(\cos(x_1^2\pi)+1)(\cos(x_2^2\pi)+1)-2$, $x_1,x_2\in [-1,1]$. The solution of the BIEs (\ref{MulPMLBIE1})-(\ref{MulPMLBIEN}) resulting from the PML-BIE method are displayed in the Fig.~\ref{MulExam7} which decay rapidly on the PML interface. Finally, Fig.~\ref{MulExam8} presents the first component of $\bm E_j$ and $\bm H_j$, $j=1,2,3$ at $x_1=0$.
	
\section*{Acknowledgments}
G. Bao was partially supported by the Key Project of Joint Funds for Regional Innovation and Development (U21A20425) and a Key Laboratory of Zhejiang Province. W. Lu was partially supported by NSFC Grant 12174310, and a Key Project of Joint Funds For Regional Innovation and Development (U21A20425). T. Yin gratefully acknowledges support from NSFC through Grants 12171465 and 12288201. L. Zhang was partially supported by NSFC Grant 12071060 and Postdoctoral Fellowship Program of CPSF GZC20232336.
	
	\appendix
	\section*{Appendix.}
	\renewcommand{\theequation}{A.\arabic{equation}}
	
	This appendix is arranged to present the expression of the fields $(\bm E_j^{\rm src},\bm H_j^{\rm src})$ for the case of plane wave incidence. Consider the scattering problem of a plane electromagnetic wave (\ref{PPW}) by a planar layered medium with $N\ge 2$. The specific derivations are based on the waves-tracing arguments, see for example~\cite{B12,C95}. In detail, introducing the propagation constants $k_{j,x_3}=\sqrt{k_j^2-k_{1,x_2}^2}$, $j=2,...,N$ with the complex square root defined such that $\Ima k_{j,x_3}\ge 0$, we can write the $x_1-$component of the planar-medium solution~\cite{C17} in $\Omega_j$, $j=1,...,N$, as
	\ben
	&& E_{j,x_1}^{\rm src}(\bm x)=E_0e^{ik_{1,x_2}x_2}\begin{cases}
		e^{-ik_{1,x_3}x_3}+\widetilde R_{12}^{\rm TE}e^{ik_{1,x_3}(x_3+2d_1)}, & j=1,\cr
		A_j^{\rm TE}\left[e^{-ik_{j,x_3}x_3}+\widetilde R_{j,j+1}^{\rm TE}e^{ik_{j,x_3}(x_3+2d_j)}\right] , &  2\le j\le N,
	\end{cases}\\
	&& H_{j,x_1}^{\rm src}(\bm x)=H_0e^{ik_{1,x_2}x_2}\begin{cases}
		e^{-ik_{1,x_3}\bm x_3}+\widetilde R_{12}^{\rm TM}e^{ik_{1,x_3}(x_3+2d_1)}, & j=1,\cr
		A_j^{\rm TM}\left[e^{-ik_{j,x_3}x_3}+\widetilde R_{j,j+1}^{\rm TM}e^{ik_{j,x_3}(x_3+2d_j)}\right] , & 2\le j\le N,
	\end{cases}
	\enn
	relying on the amplitudes
	\ben
	E_0=-p_{x_3}k_{1,x_2}-p_{x_2}k_{1,x_3},\quad H_0=\frac{k_1^2}{\omega\mu_1}p_{x_2},
	\enn
	the amplitudes $A_j^{\rm TE}$, $A_j^{\rm TM}$ and generalized reflection coefficients $\widetilde R_{j,j+1}^{\rm TE}$ and $\widetilde R_{j,j+1}^{\rm TM}$. The amplitudes and the generalized reflection coefficients admit the following recursive relations
	\ben
	&&A_j^{\rm TE}=\begin{cases}
		1,& j=1,\cr
		\frac{T_{j-1,j}^{\rm TE}A_{j-1,j}^{\rm TE}e^{i(k_{j-1,x_3}-k_{j,x_3})d_{j-1}}}{1-R_{j,j-1}^{\rm TE}\widetilde R_{j,j+1}^{\rm TE}e^{2ik_{j,x_3}(d_j-d_{j-1})}},& j=2,...,N,
	\end{cases},\\
	&&A_j^{\rm TM}=\begin{cases}
		1,& j=1,\cr
		\frac{T_{j-1,j}^{\rm TM}A_{j-1,j}^{\rm TM}e^{i(k_{j-1,x_3}-k_{j,x_3})d_{j-1}}}{1-R_{j,j-1}^{\rm TM}\widetilde R_{j,j+1}^{\rm TM}e^{2ik_{j,x_3}(d_j-d_{j-1})}},& j=2,...,N,
	\end{cases}
	\enn
	and
	\ben
	&&\widetilde R_{j-1,j}^{\rm TE}=\begin{cases}
		0,& j=N+1,\cr
		R_{j-1,j}^{\rm TE}+\frac{T_{j,j-1}^{\rm TE}\widetilde R_{j,j+1}^{\rm TE}T_{j-1,j}^{\rm TE}e^{2ik_{j,x_3}(d_j-d_{j-1})}}{1-R_{j,j-1}^{\rm TE}\widetilde R_{j,j+1}^{\rm TE}e^{2ik_{j,x_3}(d_j-d_{j-1})}},& j=2,...,N,
	\end{cases},\\
	&&\widetilde R_{j-1,j}^{\rm TM}=\begin{cases}
		0,& j=N+1,\cr
		R_{j-1,j}^{\rm TM}+\frac{T_{j,j-1}^{\rm TM}\widetilde R_{j,j+1}^{\rm TM}T_{j-1,j}^{\rm TM}e^{2ik_{j,x_3}(d_j-d_{j-1})}}{1-R_{j,j-1}^{\rm TM}\widetilde R_{j,j+1}^{\rm TM}e^{2ik_{j,x_3}(d_j-d_{j-1})}},& j=2,...,N,
	\end{cases}
	\enn
	with the reflection coefficients
	\ben
	R_{j,j+1}^{\rm TE}=\frac{\mu_{j+1}k_{j,x_3}-\mu_{j}k_{j+1,x_3}}{\mu_{j+1}k_{j,x_3}+\mu_{j}k_{j+1,x_3}},\quad R_{j,j+1}^{\rm TM}=\frac{\epsilon_{j+1}k_{j,x_3}-\epsilon_{j}k_{j+1,x_3}}{\epsilon_{j+1}k_{j,x_3}+\epsilon_{j}k_{j+1,x_3}}
	\enn
	and the transmission coefficients
	\ben
	T_{j,j+1}^{\rm TE}=\frac{2\mu_{j+1}k_{j,x_3}}{\mu_{j+1}k_{j,x_3}+\mu_{j}k_{j+1,x_3}},\quad T_{j,j+1}^{\rm TM}=\frac{2\epsilon_{j+1}k_{j,x_3}}{\epsilon_{j+1}k_{j,x_3}+\epsilon_{j}k_{j+1,x_3}}.
	\enn
	Then we can obtain the other two components of the $\bm E_j^{\rm src}$ and $\bm H_j^{\rm src}$, $j=1,...,N$ given by~\cite{AP22}
	\ben
	&&E_{j,x_2}^{\rm src}=\frac{i}{\omega\epsilon_{j}}\frac{\partial H_{j,x_1}^{\rm src}}{\partial x_3},\quad E_{j,x_3}^{\rm src}=-\frac{i}{\omega\epsilon_{j}}\frac{\partial H_{j,x_2}^{\rm src}}{\partial x_1}, \\
	&&H_{j,x_2}^{\rm src}=-\frac{i}{\omega\mu_{j}}\frac{\partial E_{j,x_1}^{\rm src}}{\partial x_3},\quad E_{j,x_3}^{\rm src}=\frac{i}{\omega\mu_{j}}\frac{\partial E_{j,x_1}^{\rm src}}{\partial x_2}.
	\enn

\end{document}